\newtheorem{Theorem}{Theorem}[section]{\bfseries}{\itshape}
\newtheorem{theorem}[Theorem]{Theorem}{\bfseries}{\itshape}
{\bfseries}{\itshape}
\newtheorem{lemma}[Theorem]{Lemma}{\bfseries}{\itshape}
{\bfseries}{\itshape}
\newtheorem{corollary}[Theorem]{Corollary}{\bfseries}{\itshape}
{\bfseries}{\itshape}
{\bfseries}{\itshape}
\newtheorem{remark}[Theorem]{Remark}{\bfseries}{\itshape}
{\bfseries}{\itshape}
\newtheorem{proposition}[Theorem]{Proposition}{\bfseries}{\itshape}
\newtheorem{assumption}[Theorem]{Assumption}{\bfseries}{\itshape}
\newtheorem{definition}[Theorem]{Definition}{\bfseries}{\itshape}
\newtheorem{example}[Theorem]{Example}{\bfseries}{\itshape}
\def\BbbR{{\mathbb R}}
\def\BbbN{{\mathbb N}}
\newcommand{\eex}{\hbox{}\hfill\rule{0.8ex}{0.8ex}}
\newcommand{\eremk}{\eex}
\newcommand{\M}{{\mathcal M}}
\numberwithin{equation}{section}
\newif\iftechreport
\begin{document}
\begin{frontmatter}

\iftechreport 
\title{Robust exponential convergence of $hp$-FEM in balanced norms for singularly perturbed reaction-diffusion problems: corner domains} 
\else
\title{Robust exponential convergence of $hp$-FEM in balanced norms for singularly perturbed reaction-diffusion problems: corner domains} 
\fi
 \author{M. Faustmann} 
\ead{markus.faustmann@tuwien.ac.at}
 \author{J.M. Melenk\corref{cor1}}
\ead{melenk@tuwien.ac.at}
\address{Technische Universit\"at Wien, Wiedner Hauptstra\ss e 8-10, A-1040 Vienna}


\begin{abstract}
The $hp$-version of the finite element method is applied to singularly perturbed reaction-diffusion type equations on polygonal domains. 
The solution exhibits boundary layers as well as corner layers. On a class of meshes that are suitably refined near the boundary and the corners, 
robust exponential convergence (in the polynomial degree) is shown in both a balanced norm and the maximum norm. 
\end{abstract}
\begin{keyword}
high order FEM, singular perturbation, balanced norm, uniform estimates
\end{keyword}
\end{frontmatter}
\section{Introduction}
We consider the boundary value problem 
\begin{equation}
\label{eq:model-problem} 
-\varepsilon^2 \nabla \cdot (A(x) \nabla u)  + c(x) u = f \quad \mbox{ in $\Omega \subset \BbbR^2$}, 
\qquad u|_{\partial\Omega} = 0, 
\end{equation}
where $A$ is pointwise symmetric positive definite and satisfies $A \ge \alpha_0 > 0$ on $\Omega$, and 
$c\ge c_0 >0$ on $\Omega$ for some fixed $\alpha_0$, $c_0 > 0$. 
Furthermore, the functions $A$, $f$, $c$ are assumed to be analytic on $\overline{\Omega}$. 
For the parameter $\varepsilon \in (0,1]$ we focus on the case of small $\varepsilon<\!\!<1$. 
The geometry $\Omega \subset \BbbR^2$ is assumed to be a curvilinear polygon. That is, the boundary $\partial\Omega$
of the bounded Lipschitz domain $\Omega$ consists of finitely many arcs, each of which can be parametrized
by an analytic function. 

The weak formulation of (\ref{eq:model-problem}) is: Find $u \in H^1_0(\Omega)$ such that  
\begin{equation}
\label{eq:weak-formulation}
a(u,v):= \varepsilon^2 \int_\Omega A(x) \nabla u \cdot \nabla v + c(x) u v = \int_\Omega f v 
\qquad \forall v \in H^1_0(\Omega). 
\end{equation}
The bilinear form $a$ induces the energy norm $\|\cdot\|_\varepsilon$ by 
$\|u\|^2_\varepsilon:= a(u,u)$. The Galerkin discretization of (\ref{eq:weak-formulation}) is: 
Given a closed subspace $V_N \subset H^1_0(\Omega)$, find $u_N \in V_N$ such that 
\begin{equation}
\label{eq:abstract-FEM}
a(u_N,v) = \int_\Omega f v \qquad \forall v \in V_N. 
\end{equation}

Clearly, the choice of the space $V_N$ is crucial given that the solution has boundary layers 
near the boundary $\partial\Omega$ and corner singularities at the vertices  of $\Omega$. The boundary layers
can very effectively be captured with anisotropic elements. In the context of the $h$-version FEM, a possibility
are so-called Shishkin meshes as described, for example, in the monograph \cite{roos-stynes-tobiska96}.  
In the context of the $hp$-version FEM, 
\emph{Spectral Boundary Layer Meshes} are appropriate. The latter go back to \cite{schwab-suri96} and 
have been extensively studied in \cite{schwab-suri-xenophontos98,melenk97a,melenk02}. Suitable meshes that also 
resolve the corner singularities are described and analyzed in \cite{melenk02}.  The FEM error $u - u_N$ is naturally analyzed in 
the \emph{energy norm} $\|\cdot\|_\varepsilon$, which is, however, rather weak in the  sense that the energy
norm of the boundary layer contributions tends to zero as $\varepsilon \rightarrow 0$. In particular, a 
convergence analysis that is formulated in the energy norm cannot be expected to yield strong results 
for the error within the layer. This observation has motivated convergence analyses in stronger norms, in particular, 
the so-called \emph{balanced norm}
\begin{equation}
\label{eq:balanced-norm}
\|v\|_{\sqrt{\varepsilon}}:= |v|_{\sqrt{\varepsilon}}+ \|v\|_{L^2(\Omega)}, 
\qquad 
|v|_{\sqrt{\varepsilon}}:= 
\varepsilon^{1/2} |v|_{H^1(\Omega)}. 
\end{equation}
In the $h$-version FEM, robust algebraic convergence of the Galerkin method in this balanced norm has recently
been shown in 1D and 2D for smooth geometries \cite{lin-stynes12,franz-roos14,roos-schopf15}. The corresponding
analysis for the $hp$-version FEM was presented in \cite{melenk-xenophontos16}, where it is shown for 1D and 
2D problems with smooth geometry that robust exponential convergence of the Galerkin method in the balanced 
norm holds true. Robust convergence in the balanced norm is also at the heart of $L^\infty$-estimates
in \cite{melenk-xenophontos16}. In the present work, we extend the analysis of \cite{melenk-xenophontos16} 
to the above setting of piecewise analytic geometries. 
We show robust exponential convergence in the balanced norm (Theorem~\ref{thm:balanced-norm}) 
and the $L^\infty$-norm (Theorem~\ref{thm:Linfty-estimate}). 
As a by-product, the present analysis simplifies some of the 
arguments of \cite{melenk-xenophontos16}. 

\section{An abstract convergence result for the balanced norm}
\label{sec:abstract}
In the following Lemma~\ref{lemma:use-L2-projection} the set $\Omega_0 \subset \Omega$ is an arbitrary open subset. 
In its later application, it will 
be the union of the ``large'' elements of the triangulation;  the complement $\Omega\setminus\Omega_0$ 
will consist of anisotropic elements to capture the boundary layers and of small elements to 
resolve the corner singularities. 
\begin{lemma}
\label{lemma:use-L2-projection}
Let $V_N \subset H^1_0(\Omega)$ be a closed subspace. 
Let $u\in H^1_0(\Omega)$, $u_N \in V_N \subset H^1_0(\Omega)$, and $Iu \in V_N$ satisfy the following orthogonality conditions: 
\begin{align} 
\label{eq:Galerkin-orthogonality}
a(u - u_N,v) &= 0 \qquad \forall v \in V_N, \\ 
\label{eq:L^2-orthogonality-on-Omega_0}
\int_{\Omega_0} c(x) (u - Iu) v &= 0 \qquad \forall v \in V_N|_{\Omega_0}. 
\end{align}
Then, for a constant $C>0$ that depends only on $\|A\|_{L^\infty(\Omega)}$, $\|c\|_{L^\infty(\Omega)}$, $\alpha_0$, and $c_0$: 
\begin{equation}
\label{eq:lemma:use-L2-projection-10}
\varepsilon^{1/2} \|\nabla (u - u_N)\|_{L^2(\Omega)} \leq C 
\left[ \varepsilon^{1/2} \|\nabla( u - Iu)\|_{L^2(\Omega)} + 
\varepsilon^{-1/2} \|u - Iu\|_{L^2(\Omega\setminus\Omega_0)}\right].  
\end{equation}
\end{lemma}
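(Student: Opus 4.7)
The plan is to split the error via the intermediate interpolant:
$u - u_N = (u - Iu) + (Iu - u_N)$, set $w := Iu - u_N \in V_N$, and apply the triangle inequality. The contribution $\varepsilon^{1/2}\|\nabla(u-Iu)\|_{L^2(\Omega)}$ already appears on the right-hand side of (\ref{eq:lemma:use-L2-projection-10}), so the task reduces to bounding $\varepsilon^{1/2}\|\nabla w\|_{L^2(\Omega)}$ in terms of the two quantities on the right.

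The main tool is Galerkin orthogonality (\ref{eq:Galerkin-orthogonality}): since $w \in V_N$, one has $a(u - u_N, w) = 0$, which converts the square of the energy norm of $w$ into
\[
\|w\|_\varepsilon^2 \;=\; a(Iu - u, w) \;=\; -\varepsilon^2 \int_\Omega A\,\nabla(u-Iu)\cdot\nabla w \;-\; \int_\Omega c\,(u-Iu)\,w .
\]
Now comes the step where the hypothesis (\ref{eq:L^2-orthogonality-on-Omega_0}) is decisive: I split the $L^2$-integral as $\int_{\Omega_0} + \int_{\Omega\setminus\Omega_0}$. Since $w|_{\Omega_0} \in V_N|_{\Omega_0}$, the orthogonality (\ref{eq:L^2-orthogonality-on-Omega_0}) makes the integral over $\Omega_0$ vanish, leaving only $\int_{\Omega\setminus\Omega_0} c(u-Iu)w$, which I bound by $\|c\|_{L^\infty}\|u-Iu\|_{L^2(\Omega\setminus\Omega_0)}\|w\|_{L^2(\Omega)}$.

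For the gradient term, Cauchy--Schwarz gives $\|A\|_{L^\infty}\varepsilon^2 \|\nabla(u-Iu)\|_{L^2}\|\nabla w\|_{L^2}$. Two applications of Young's inequality (absorbing $\tfrac{\alpha_0}{2}\varepsilon^2\|\nabla w\|_{L^2}^2$ and $\tfrac{c_0}{2}\|w\|_{L^2}^2$ into the left-hand side of $\|w\|_\varepsilon^2 \geq \alpha_0\varepsilon^2\|\nabla w\|_{L^2}^2 + c_0\|w\|_{L^2}^2$) yield
\[
\varepsilon^2 \|\nabla w\|_{L^2(\Omega)}^2 + \|w\|_{L^2(\Omega)}^2 \;\lesssim\; \varepsilon^2\|\nabla(u-Iu)\|_{L^2(\Omega)}^2 + \|u-Iu\|_{L^2(\Omega\setminus\Omega_0)}^2 ,
\]
with an implicit constant depending only on the parameters listed in the statement. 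Dividing through by $\varepsilon$, taking square roots and combining with the triangle inequality delivers (\ref{eq:lemma:use-L2-projection-10}).

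The main conceptual obstacle — and the reason the lemma uses the auxiliary interpolant $Iu$ at all — is that the naive route of bounding $a(u-u_N,u-u_N) = a(u-u_N,u-Iu)$ directly does \emph{not} work: splitting the resulting $L^2$-integral via $u-u_N = (u-Iu) + (Iu - u_N)$ produces a leftover term $\|u-Iu\|_{L^2(\Omega_0)}^2$ that the hypothesis cannot eliminate. Estimating $w = Iu - u_N \in V_N$ instead lets the $L^2$-orthogonality (\ref{eq:L^2-orthogonality-on-Omega_0}) be applied directly and cleanly, which is precisely why the $L^2$-norm of $u-Iu$ need only be controlled outside the ``large-element'' region $\Omega_0$.
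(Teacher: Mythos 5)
Your proof is correct and follows essentially the same route as the paper: Galerkin orthogonality converts the energy norm of $w = Iu - u_N$ (the paper uses $u_N - Iu$, an immaterial sign change) into $a(Iu-u,w)$, the $L^2$-orthogonality on $\Omega_0$ restricts the reaction term to $\Omega\setminus\Omega_0$, and Cauchy--Schwarz, Young, and the triangle inequality finish the argument. Your closing remark about why one estimates $Iu-u_N$ rather than $u-u_N$ directly is an accurate observation, though not part of the proof itself.
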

\begin{proof}
We compute 
\begin{align*} 
\|u_N - Iu\|^2_{\varepsilon} &= a(u_N - Iu,u_N - Iu) = a(u - Iu,u_N - Iu) \\
&= \varepsilon^2 \int_\Omega A(x) \nabla( u - Iu) \cdot \nabla (u_N - Iu) + \int_\Omega c(x) (u - Iu) (u_N - Iu)  \\
&= \varepsilon^2 \int_\Omega A(x) \nabla( u - Iu) \cdot \nabla (u_N - Iu) 
+ \int_{\Omega \setminus \Omega_0} c(x) (u - Iu) (u_N - Iu)  \\
&\lesssim \varepsilon^2 \|\nabla(u - Iu)\|_{L^2(\Omega)} \|\nabla( u_N - Iu)\|_{L^2(\Omega)} + 
\|u - Iu\|_{L^2(\Omega\setminus\Omega_0)} \|u_N - Iu\|_{L^2(\Omega\setminus\Omega_0)}. 
\end{align*}
Using Young's inequality we conclude 
\begin{align*}
\|u_N - Iu\|^2_\varepsilon \lesssim \varepsilon^2 \|\nabla (u - Iu)\|^2_{L^2(\Omega)}  + \|u - Iu\|^2_{L^2(\Omega\setminus\Omega_0)}. 
\end{align*}
This implies in particular with the triangle inequality 
\begin{align*}
\varepsilon^{1/2} \|\nabla (u - u_N)\|_{L^2(\Omega)} 
& \leq 
\varepsilon^{1/2} \|\nabla (u - Iu)\|_{L^2(\Omega)}  + 
\varepsilon^{1/2} \|\nabla (u_N - Iu)\|_{L^2(\Omega)}  \\
& \lesssim 
\varepsilon^{1/2} \|\nabla (u - Iu)\|_{L^2(\Omega)}  + \varepsilon^{-1/2} \|u - Iu\|_{L^2(\Omega\setminus\Omega_0)}. 
\qedhere
\end{align*}
\end{proof}
Lemma~\ref{lemma:use-L2-projection} indicates what the ingredients for an analysis in balanced norms are: 
\begin{enumerate}
\item 
The approximation properties of the (weighted) $L^2$-projection $\Pi^{L^2}_{\Omega_0}$ on $\Omega_0$ 
given by 
\begin{equation}
\label{eq:L^2-projection-on-Omega_0} 
\Pi^{L^2}_{\Omega_0} u \in V_N|_{\Omega_0} \qquad \mbox{ s.t. } \qquad 
\int_{\Omega_0} c(x) (u - \Pi^{L^2}_{\Omega_0} u) v = 0\qquad \forall v \in V_N|_{\Omega_0}. 
\end{equation}
\item 
The properties of extension operators ${\mathcal L}_0$ that extend functions from $V_N|_{\Omega_0}$ 
to functions (also in $V_N$) on $\Omega$. Important will be their stability properties measured by  
\begin{equation}
\label{eq:stability-lifting}
\|{\mathcal L}_0\|:= \sup_{v \in V_N \colon v|_{\partial\Omega_0} \ne 0} 
\frac{\varepsilon^{1/2} |{\mathcal L}_0 v|_{H^1(\Omega\setminus\Omega_0)} + 
\varepsilon^{-1/2} \|{\mathcal L}_0 v\|_{L^2(\Omega\setminus\Omega_0)}}{\|v\|_{L^\infty(\partial\Omega_0)}}. 
\end{equation}
\end{enumerate}
\begin{remark} 
While the term 
$\varepsilon^{1/2} |{\mathcal L}_0 v|_{H^1(\Omega\setminus\Omega_0)} + 
\varepsilon^{-1/2} \|{\mathcal L}_0 v\|_{L^2(\Omega\setminus\Omega_0)}$ appears fairly naturally when measuring
the norm of the lifting operator ${\mathcal L}_0$, the denominator $\|v\|_{L^\infty(\partial\Omega_0)}$ is not 
the only ``natural'' choice and could
be replaced with other expressions, e.g., $\|v\|_{L^2(\Omega_0)}$. 
\eremk
\end{remark} 
The operators $\Pi^{L^2}_{\Omega_0}$ and ${\mathcal L}_0$ allow us to formulate an error estimate for the Galerkin
error $u - u_N$ in a balanced norm:
\begin{corollary}
\label{cor:balanced-norm-key-estimate}
In the setting of Lemma~\ref{lemma:use-L2-projection} there exists a constant $C > 0$ 
depending solely on $\|A\|_{L^\infty(\Omega)}$, $\|c\|_{L^\infty(\Omega)}$, $\alpha_0$, $c_0$ such that 
\begin{align}
\label{eq:quasi-optimality-balanced-norm}
|u - u_N|_{\sqrt{\varepsilon}} & \leq C \inf_{v \in V_N} \Bigl[
\|{\mathcal L}_0\| \, \|\Pi^{L^2}_{\Omega_0} u - v\|_{L^\infty(\partial\Omega_0)} \\
\nonumber 
&\phantom{\inf_{v \in V_N} \Bigl[\quad } \mbox{}+ 
\sqrt{\varepsilon} \|\nabla( v - \Pi^{L^2}_{\Omega_0} u)\|_{L^2(\Omega_0)} + 
 |u - v|_{\sqrt{\varepsilon}} + 
\varepsilon^{-1/2} \|u - v\|_{L^2(\Omega\setminus\Omega_0)}
\Bigr].
\end{align}
\end{corollary}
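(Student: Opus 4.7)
The plan is to apply Lemma~\ref{lemma:use-L2-projection} with an approximation $Iu\in V_N$ that is constructed from $\Pi^{L^2}_{\Omega_0} u$ together with the extension operator $\mathcal{L}_0$ and the arbitrary comparison function $v \in V_N$ over which the infimum is taken. Concretely, for each $v \in V_N$ I would set
$$
Iu := v + \mathcal{L}_0\bigl(\Pi^{L^2}_{\Omega_0} u - v|_{\Omega_0}\bigr) \in V_N .
$$
Since $\mathcal{L}_0$ extends functions given on $\Omega_0$ without modifying them there, this yields $Iu|_{\Omega_0} = \Pi^{L^2}_{\Omega_0} u$, and hence the defining property of the weighted $L^2$-projection guarantees that the orthogonality (\ref{eq:L^2-orthogonality-on-Omega_0}) required in Lemma~\ref{lemma:use-L2-projection} holds.

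With this $Iu$, Lemma~\ref{lemma:use-L2-projection} gives
$$
|u-u_N|_{\sqrt{\varepsilon}} \lesssim \varepsilon^{1/2} \|\nabla(u-Iu)\|_{L^2(\Omega)} + \varepsilon^{-1/2}\|u-Iu\|_{L^2(\Omega\setminus\Omega_0)} .
$$
I would then split $u-Iu = (u-v) + (v-Iu)$ in both pieces. The $u-v$ contribution immediately produces the terms $|u-v|_{\sqrt{\varepsilon}}$ and $\varepsilon^{-1/2}\|u-v\|_{L^2(\Omega\setminus\Omega_0)}$ in the bound. For the remaining piece we have $v - Iu = -\mathcal{L}_0\bigl(\Pi^{L^2}_{\Omega_0}u - v|_{\Omega_0}\bigr)$: restricted to $\Omega_0$ this equals $v - \Pi^{L^2}_{\Omega_0}u$, whose $H^1$-seminorm scaled by $\sqrt{\varepsilon}$ is exactly the second term on the right-hand side; on $\Omega\setminus\Omega_0$ the definition (\ref{eq:stability-lifting}) of $\|\mathcal{L}_0\|$ bounds $\varepsilon^{1/2}|\mathcal{L}_0 w|_{H^1(\Omega\setminus\Omega_0)} + \varepsilon^{-1/2}\|\mathcal{L}_0 w\|_{L^2(\Omega\setminus\Omega_0)}$ by $\|\mathcal{L}_0\|\,\|w\|_{L^\infty(\partial\Omega_0)}$ with $w = \Pi^{L^2}_{\Omega_0}u - v|_{\Omega_0}$, producing the first term of the bound.

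The argument is essentially bookkeeping once the choice of $Iu$ is in place. The only conceptual step is recognizing that one must \emph{glue} $\Pi^{L^2}_{\Omega_0} u$ (needed to enforce the orthogonality in Lemma~\ref{lemma:use-L2-projection}) to an arbitrary $v \in V_N$ (needed to obtain the infimum on the right-hand side) via the extension $\mathcal{L}_0$, and it is precisely this gluing that forces the $\|\mathcal{L}_0\|\,\|\Pi^{L^2}_{\Omega_0}u - v\|_{L^\infty(\partial\Omega_0)}$ term to appear. Taking the infimum over $v\in V_N$ at the end then yields the corollary.
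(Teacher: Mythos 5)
Your proposal is correct and follows essentially the same route as the paper: the paper defines $Iu$ piecewise as $\Pi^{L^2}_{\Omega_0}u$ on $\Omega_0$ and $v + {\mathcal L}_0\bigl((\Pi^{L^2}_{\Omega_0}u - v)|_{\partial\Omega_0}\bigr)$ on $\Omega\setminus\Omega_0$, which is exactly your $Iu$ once one adopts your (harmless) convention that ${\mathcal L}_0$ acts as the identity on $\Omega_0$, and the required orthogonality then holds by definition of $\Pi^{L^2}_{\Omega_0}$. The subsequent splitting of $u-Iu$ and the use of the stability norm $\|{\mathcal L}_0\|$ reproduce the paper's bookkeeping term for term.
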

\begin{proof} Let $v \in V_N$. Consider $I u$ defined by 
$$
Iu:= 
\begin{cases}
\Pi^{L^2}_{\Omega_0} u & \quad \mbox{ in $\Omega_0$} \\
v +  {\mathcal L}_0 ((\Pi^{L^2}_{\Omega_0} u)|_{\partial\Omega_0}   - v|_{\partial\Omega_0}) 
& \quad \mbox{ in $\Omega\setminus\Omega_0$}. 
\end{cases}
$$
With the notations 
$|w|_{\sqrt{\varepsilon},\omega} = \sqrt{\varepsilon} \|\nabla w\|_{L^2(\omega)}$ and 
$\|w\|_{\sqrt{\varepsilon},\omega} = \sqrt{\varepsilon} \|\nabla w\|_{L^2(\omega)} + 
 \|w\|_{L^2(\omega)}$ for measurable sets $\omega$, 
Lemma~\ref{lemma:use-L2-projection} implies 
\begin{align*}
|u - u_N|_{\sqrt{\varepsilon}} & \lesssim |u - \Pi^{L^2}_{\Omega_0} u|_{\sqrt{\varepsilon},\Omega_0} + 
|u - Iu|_{\sqrt{\varepsilon},\Omega\setminus\Omega_0} + 
\varepsilon^{-1/2} \|u - Iu\|_{L^2(\Omega\setminus\Omega_0)} \\
&\lesssim 
|u - \Pi^{L^2}_{\Omega_0} u|_{\sqrt{\varepsilon},\Omega_0} + 
|u - v|_{\sqrt{\varepsilon},\Omega\setminus \Omega_0} + 
\varepsilon^{-1/2} \|u - v\|_{L^2(\Omega\setminus\Omega_0)} + 
\|{\mathcal L}_0\| \, \|\Pi^{L^2}_{\Omega_0} u - v\|_{L^\infty(\partial\Omega_0)}. 
\end{align*}
Estimating further with the triangle inequality $|u - \Pi^{L^2}_{\Omega_0} u|_{\sqrt{\varepsilon},\Omega_0} 
\leq 
|u - v |_{\sqrt{\varepsilon},\Omega_0} + |v - \Pi^{L^2}_{\Omega_0} u|_{\sqrt{\varepsilon},\Omega_0} 
$
and then infimizing over $v \in V_N$ gives the result. 
\end{proof}
\section{$hp$-FEM}
\subsection{Spectral Boundary Layer Meshes}
\label{sec:spectral-boundary-layer-mesh}
Rather than considering a general setting, we consider meshes that result from mapping
a few reference configurations. Specifically, we assume that a \emph{fixed} macro-triangulation
${\mathcal T}^{\M} = \{K^{\M} \,|\, K^{\M} \in {\mathcal T}^{\M}\}$ 
consisting of curvilinear quadrilaterals $K^{\M}$ 
with analytic element maps $F_{K^{\M}}:\widehat S:=(0,1)^2 \rightarrow K^{\M}$ 
that satisfy the usual compatibility conditions, i.e., no hanging nodes and if two elements $K_1^{\M}$, $K_2^{\M}$ 
share an edge $e$, then their element maps induce compatible parametrizations
of $e$ (cf., e.g.,  \cite[Def.~{2.4.1}]{melenk02} for the precise conditions). 
Each element of the macro-triangulation may be further refined according to the few
refinement patterns described in Definition~\ref{def:admissible-patterns} (see also \cite[Sec.~{3.3.3}]{melenk02}). 
The actual triangulation
is then obtained by transplanting refinement patterns on the reference square into physical space 
by means of the element maps of the macro-triangulation. That is, the actual element maps are concatenations
of affine maps---which realize the mapping from the reference square or triangle to the elements in the 
refinement pattern---and the element maps of the macro-triangulation. 
\begin{definition}[admissible refinement patterns]
\label{def:admissible-patterns}
Given $\sigma \in (0,1)$, $\kappa \in (0,1/2]$, and $L \in \BbbN_0$, the  following refinement patterns are \emph{admissible}:
\begin{enumerate}
\item 
The \emph{trivial patch}: The reference square $\widehat S$ is not further refined. The corresponding triangulation
of $\widehat S$ consists of the single element: $\widehat{\mathcal T} = \{\widehat S\}$.  
\item
The \emph{boundary layer patch}: $\widehat S $ is split into two elements as depicted 
in Fig.~\ref{fig:bdylayerpatch-convexpatch} (left). We set 
$\widehat{\mathcal T}^{aniso} = \{(0,1) \times (0,\kappa)\}$ and 
$\widehat{\mathcal T}^{large} = \{(0,1) \times (\kappa,1)\}$.  
\item
The \emph{tensor product patch}: $\widehat S$ is split into at least four elements. The small square $(0,\kappa)^2$
may be further refined geometrically with $L \ge 0$ ($L = 0$ corresponds to no refinement) layers
and a geometric grading factor $\sigma \in (0,1)$; see Fig.~\ref{fig:bdylayerpatch-convexpatch} (right). 
The triangulation of $\widehat S$ is decomposed into three types of elements: 
$\widehat{\mathcal T}^{large}$ consists of the one ``large'' element,  
$\widehat{\mathcal T}^{aniso}$ consists of the two anisotropic elements of aspect ratio $\mathcal{O}(1/\kappa)$, 
and $\widehat{\mathcal T}^{CL}$ consists of the elements in $(0,\kappa)^2$. 
\item 
The \emph{mixed patches} and the \emph{geometric patches}: See Fig.~\ref{fig:concavepatch-geometricpatch}.  
The triangulation is decomposed into three types of elements:
$\widehat{\mathcal T}^{large}$ consists of the two ``large'' elements,  
$\widehat{\mathcal T}^{aniso}$ consists of the two anisotropic elements of aspect ratio $\mathcal{O}(1/\kappa)$, 
and $\widehat{\mathcal T}^{CL}$ consists of the elements in $(0,\kappa)^2$. 
\end{enumerate}
\end{definition}
Since our analysis will mostly be done on the reference patterns, we introduce the notations
$\widehat{\mathcal T}^{large}_{K^\M}$,
$\widehat{\mathcal T}^{aniso}_{K^\M}$,
$\widehat{\mathcal T}^{CL}_{K^\M}$ for the sets 
$\widehat{\mathcal T}^{large}$,
$\widehat{\mathcal T}^{aniso}$,
$\widehat{\mathcal T}^{CL}$ that correspond to the chosen refinement pattern for a macro-element
$K^\M \in {\mathcal T}^\M$. 
\begin{remark}
\label{remk:further-refinement-patterns}
The list of admissible refinement patterns in Definition~\ref{def:admissible-patterns} is kept small 
in the interest of simplicity of exposition. A natural extension of the present list of patterns 
includes the case that all quadrilaterals of the refinement pattern are split into two triangles. 
The refinement patterns
employed in the numerical example in Section~\ref{sec:numerics} are also not included in Definition~\ref{def:admissible-patterns} but could be treated with the same techniques. 
In the same vein, the requirement \ref{num:requ} in Definition~\ref{def:bdylayer-mesh} below 
is imposed to shorten notation and to simplify the analysis. 
Furthermore, the stipulation that the macro-triangulation consist of quadrilaterals only could be relaxed to 
include triangles as well. Nevertheless, we point out that any triangulation consisting of triangles can 
be turned into one consisting of quadrilaterals by a suitable uniform refinement. 
\eremk
\end{remark}
We consider meshes  that result from applying these refinement patterns. Additionally, 
we impose further conditions on the choice of the refinement patterns for each element of the macro-triangulation: 
\begin{definition}[spectral boundary layer mesh ${\mathcal T}(\kappa,{\mathbf L})$]
\label{def:bdylayer-mesh}
Let ${\mathcal T}^{\M}$ be a fixed macro-triangulation consisting of quadrilaterals with analytic element maps 
that satisfies \cite[Def.~{2.4.1}]{melenk02}. Fix $\sigma \in (0,1)$ and $\kappa \in (0,1/2]$. 
For each macro-element $K^{\M}$, select $L_{K^{\M}} \in \BbbN_0$ and write ${\mathbf L}=  (L_{K^{\M}})_{K^{\M} \in {\mathcal T}^{\M}}$. 
A mesh ${\mathcal T}(\kappa,{\mathbf L})$ 
is called a \emph{spectral boundary layer mesh} if the following conditions are satisfied: 
\begin{enumerate}
\item ${\mathcal T}(\kappa,{\mathbf L})$ is obtained by refining each element $K^{\M} \in {\mathcal T}^{\M}$ 
according to one of the refinement patterns given in Definition~\ref{def:admissible-patterns} using 
the parameters $\sigma$, $\kappa$, and $L_{K^{\M}}$. 
\item The resulting mesh ${\mathcal T}(\kappa,{\mathbf L})$ is a regular triangulation of $\Omega$, i.e., it does not have hanging nodes. 
Since the element maps for the refinement patterns 
are assumed to be affine, this requirement ensures that the resulting triangulation satisfies
\cite[Def.~{2.4.1}]{melenk02}. 
\item 
\label{num:requ}
For each element $K^{\M}$ of the macro triangulation, we assume that for 
the intersection $\overline{K^{\M}}\cap \partial \Omega$ only the following cases can arise: 
a) it is empty; b) it consists of exactly  one vertex; c) it consists of the closure of exactly one edge; 
d) it consists of the closure of exactly two edges intersecting at a corner of $\partial \Omega$. 
\end{enumerate}
Further conditions on the choice of the refinement patterns are as follows:
\begin{enumerate}
\setcounter{enumi}{3}
\item If exactly one edge $e$ of a macro-element $K^{\M} \in {\mathcal T}^{\M}$  lies on $\partial\Omega$, then 
the corresponding reference edge $\widehat e = F^{-1}_{K^\M}(e)$ is $(0,1) \times \{0\}$, and the refinement patterns
of the boundary layer patch or the mixed patch is applied.
\item If exactly two edges $e_1$, $e_2$ of a macro-element $K^{\M} \in {\mathcal T}^{\M}$ lie on $\partial\Omega$, 
then the corresponding reference edges are $(0,1) \times  \{0\}$ and $\{0\} \times (0,1)$, and the tensor product 
refinement pattern is applied.
\item If exactly one vertex of a macro-element $K^{\M} \in {\mathcal T}^{\M}$ lies on $\partial\Omega$, then 
the corresponding reference vertex is the point $(0,0)$ and the refinement pattern is either the tensor product
patch, the mixed patch, or the geometric patch. 
\end{enumerate}
\end{definition}
\begin{figure}
\psfragscanon
\quad
\includegraphics[width=0.4\textwidth]{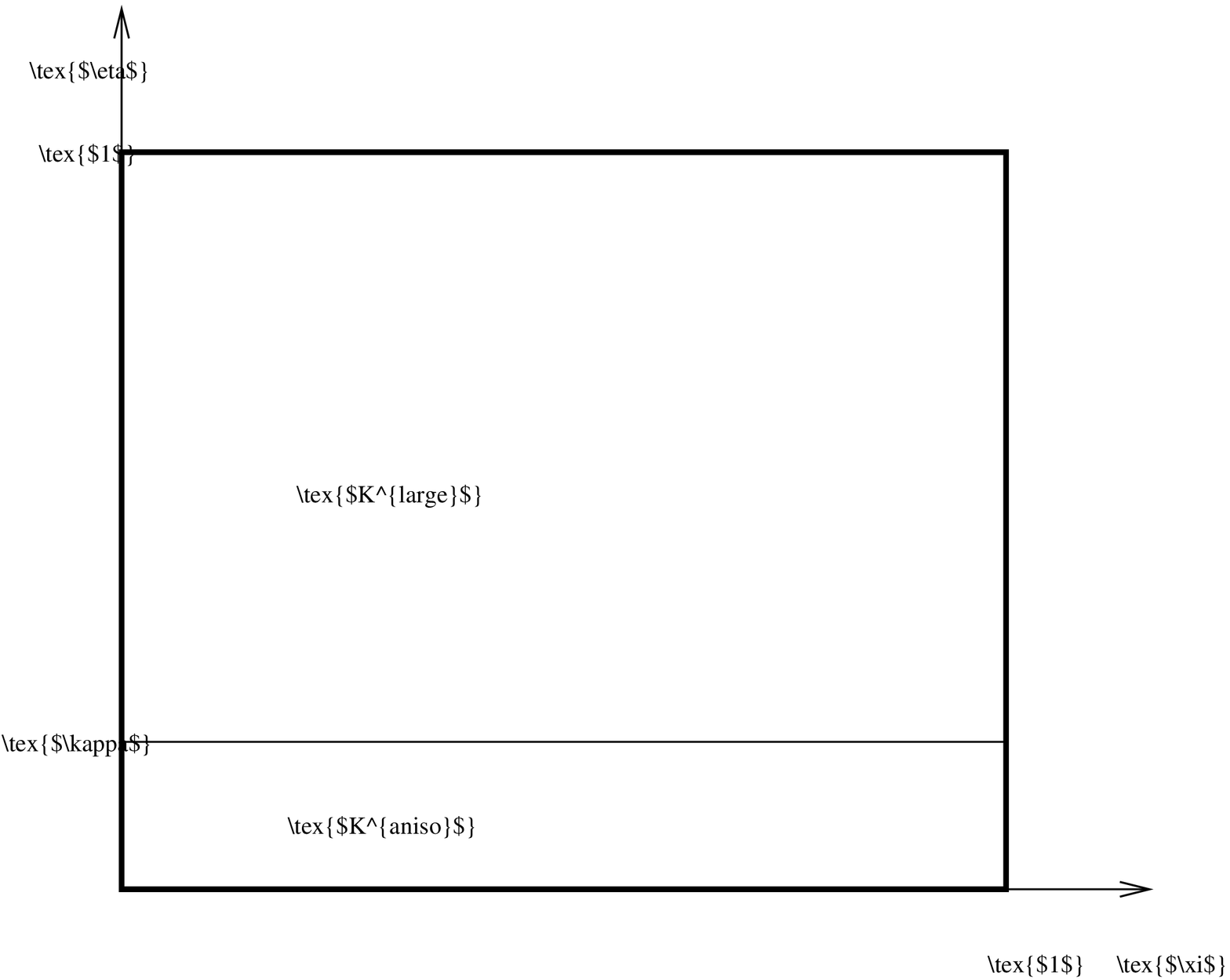}
\hfill
\includegraphics[width=0.5\textwidth]{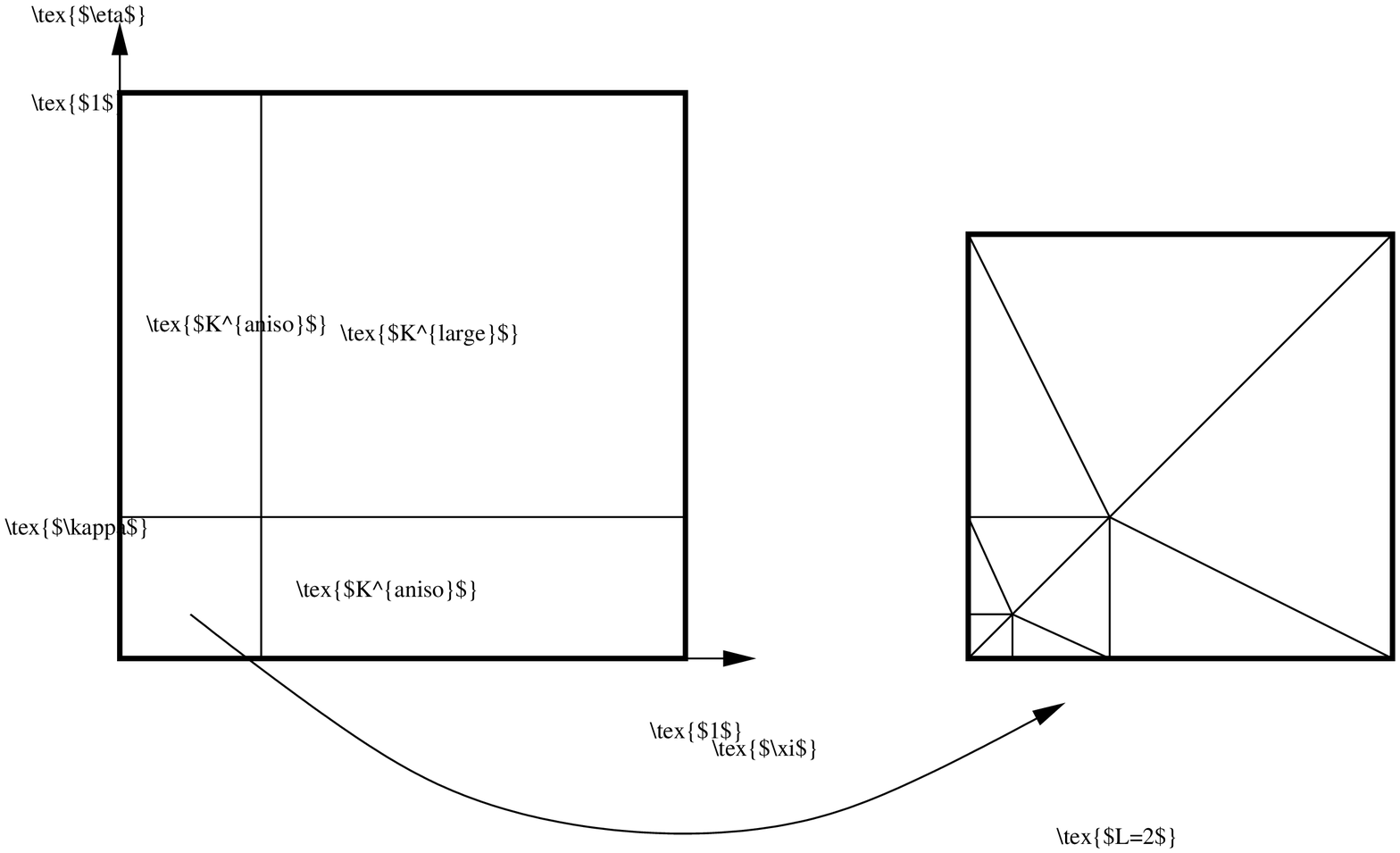}
\quad
\psfragscanoff
\caption{
\label{fig:bdylayerpatch-convexpatch} Reference boundary layer patch (left); reference tensor product patch (right).}
\end{figure}
\begin{figure}
\psfragscanon
\quad
\includegraphics[width=0.45\textwidth]{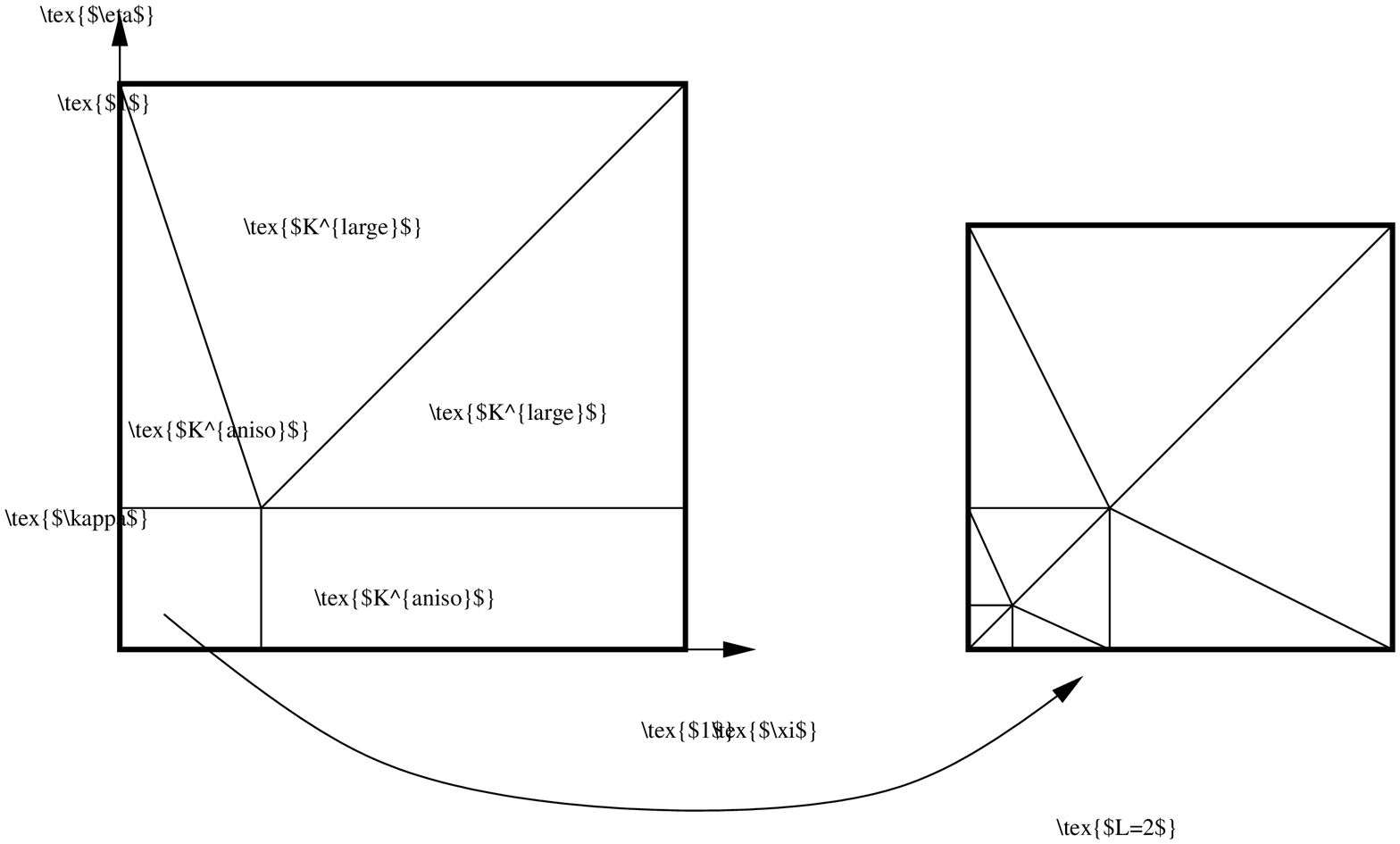}
\hfill
\includegraphics[width=0.45\textwidth]{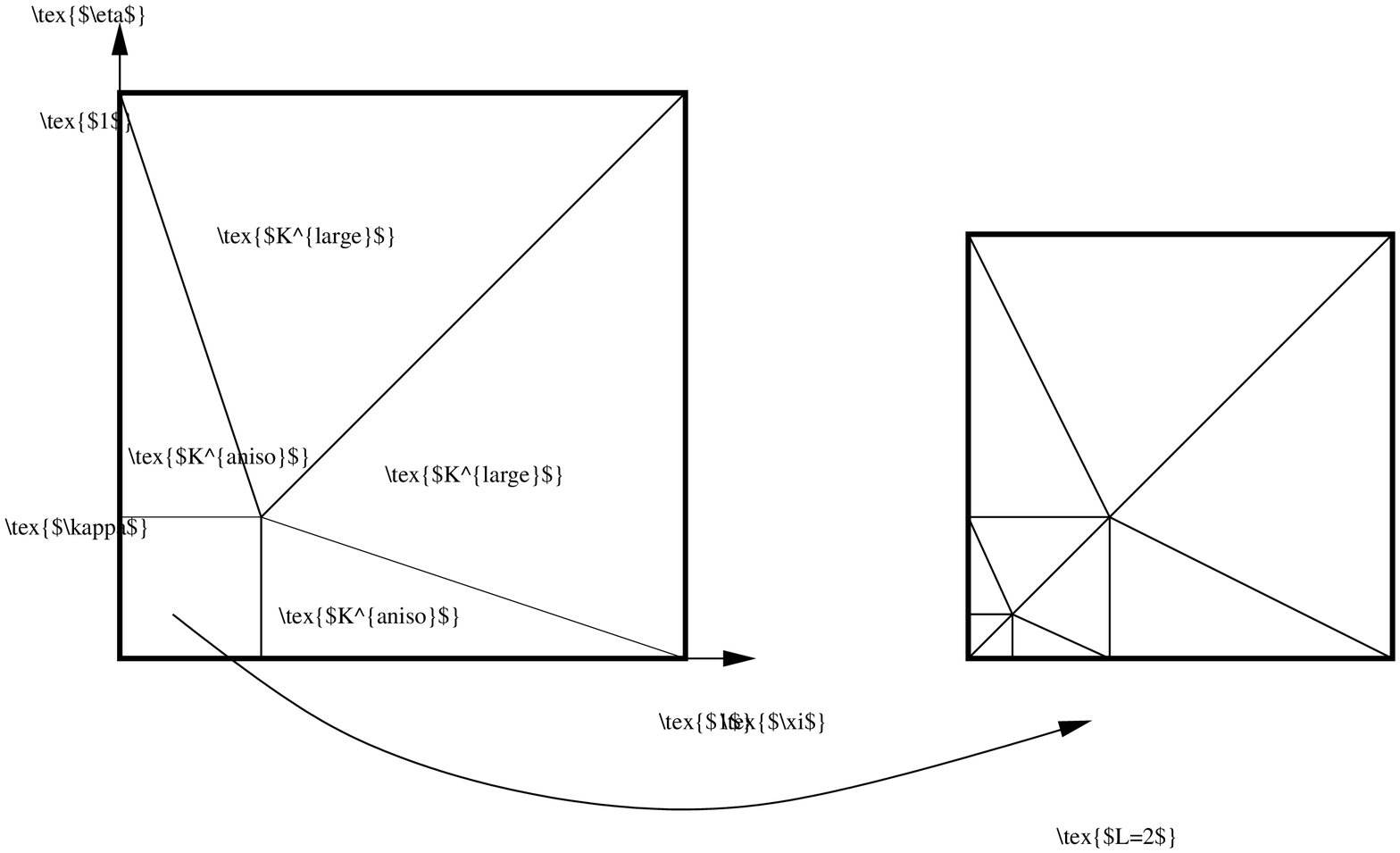}
\quad
\psfragscanoff
\caption{\label{fig:concavepatch-geometricpatch} Reference mixed patch (left); reference geometric patch (right).}
\end{figure}
We refer the reader to Fig.~\ref{fig:notation_lifting} (right) for an example of a spectral boundary layer mesh. 

The element maps $F_K:\widehat K \rightarrow K$, $K \in {\mathcal T}(\kappa,{\mathbf L})$, are given by 
the concatenation of affine maps from the reference square or triangle and the  patch maps $F_{K^{\M}}$.  
Here, the reference square is $(0,1)^2$ and the reference triangle $\{(x,y)\,|\, 0 < x < 1, 0 < y < 1-x\}$. 
For any triangulation ${\mathcal T}$ we define 
$S^{p,1}({\mathcal T}):= \{u \in H^1(\Omega)\,|\, u|_K \circ F_K \in \Pi_p(\widehat K)\}$, where $\Pi_p(\widehat K)$ 
is the tensor product space ${\mathcal Q}_p = \operatorname*{span} \{x^iy^j\,|\, 0 \leq i, j \leq p\}$ if 
$\widehat K = (0,1)^2$ and 
the space ${\mathcal P}_p = \operatorname*{span} \{x^i y^j\,|\, 0 \leq i+j \leq p\}$ if $\widehat K$ is the 
reference triangle. Finally, we set $S^{p,1}_0({\mathcal T}):= S^{p,1}({\mathcal T}) \cap H^1_0(\Omega)$. 

A triangulation ${\mathcal T}(\kappa,{\mathbf L})$ has three types of elements that cover the regions 
$\Omega_0$, $\Omega_{aniso}$, and $\Omega_{CL}$: 
\begin{definition}[$\Omega_0$, $\Omega_{aniso}$, $\Omega_{CL}$, ${\mathcal V}_{CL}$]
\begin{enumerate}
\item 
The ``large'' elements ${\mathcal T}^{large}$. These are the images (under the macro-element maps) 
of the trivial patch or the large 
elements (denoted $K^{large}$ in Figs.~\ref{fig:bdylayerpatch-convexpatch}, \ref{fig:concavepatch-geometricpatch}). 
These elements are  shape regular.  
We set $\Omega_0:=\left(\cup \{\overline{K_i}\,|\, K_i \in {\mathcal T}^{large}\}\right)^\circ$
\item 
The ``anisotropic elements'' ${\mathcal T}^{aniso}$. These elements are the images (under the macro-element maps) 
of elements of aspect ratio $\mathcal{O}(1/\kappa)$, which are denoted by $K^{aniso}$ in
Figs.~\ref{fig:bdylayerpatch-convexpatch}, \ref{fig:concavepatch-geometricpatch}. 
We set $\Omega_{aniso}:=\left(\cup \{\overline{K_i}\,|\, K_i \in {\mathcal T}^{aniso}\}\right)^\circ$. 
\item
The ``corner layer elements'' ${\mathcal T}^{CL}$:  
These elements are the images of elements in the $\mathcal{O}(\kappa)$-neighborhood 
of $(0,0)$ of the reference pattern. These elements are shape regular. 
We set $\Omega_{CL} = \left(\cup \{\overline{K_i}\,|\, K_i \in {\mathcal T}^{CL}\}\right)^{\circ}$. 
\end{enumerate} 
${\mathcal V}_{CL}:=\{F_{K^\M}(0,0)\,|\, \mbox{$K^\M $ is either a tensor product or a mixed patch or geometric patch}\}$ denotes 
the set of vertices of the macro-triangulation towards which potentially geometric refinement is done.
\end{definition}
\begin{remark} 
Key properties of the meshes ${\mathcal T}(\kappa,{\mathbf L})$ are: a) the elements abutting $\partial\Omega$ 
are either anisotropic or from $\Omega_{CL}$; b) geometric refinement can be ensured near the vertices of $\Omega$; 
c) there is $\mu > 0$ (depending only on the macro-triangulation) such that 
\begin{equation}
\label{eq:Omega0-away-from-partialOmega}
\operatorname*{dist}(\Omega_0,\partial\Omega) \ge \mu \kappa. 
\end{equation}
In the notation of \cite[Sec.~{3.3.2}]{melenk02}, these meshes are patchwise structured meshes. 
In particular, therefore, the piecewise polynomial spaces $S^{p,1}_0({\mathcal T}(\kappa,{\mathbf L}))$ have approximation 
properties that were analyzed in \cite[Sec.~{3.4.2}]{melenk02} and discussed in more detail in Proposition~\ref{prop:thm.3.4.8} below. 
\eremk
\end{remark}
\begin{remark}
(Scaling arguments) In our analysis, we will frequently appeal to scaling arguments. Strictly speaking, such
arguments apply only to affine element maps. In the present case, the element maps are concatenations
of affine maps and a fixed number of analytic diffeomorphisms (given by the element maps of the macro-triangulation). 
Hence, scaling arguments can be brought to bear for elements of the reference patterns on $\widehat S$ and then 
transplanted with the macro-element maps. In effect, therefore, scaling argument can be applied for estimates 
in $L^2$ and the $H^1$-seminorm. 
\eremk
\end{remark}
\begin{figure}
\psfragscanon
\quad
\includegraphics[width=0.5\textwidth]{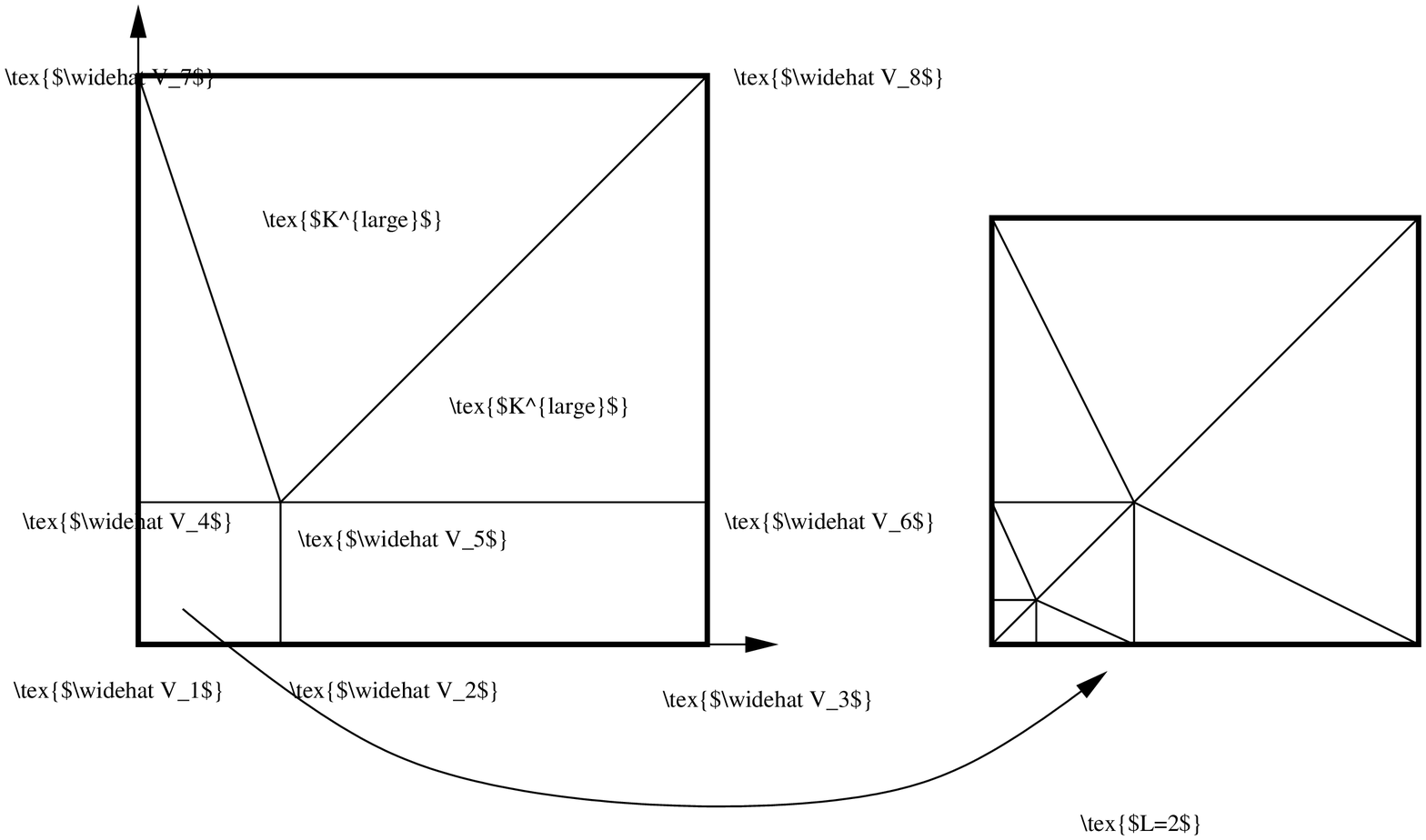}
\hfill \raisebox{10mm}{ \includegraphics[width=0.35\textwidth]{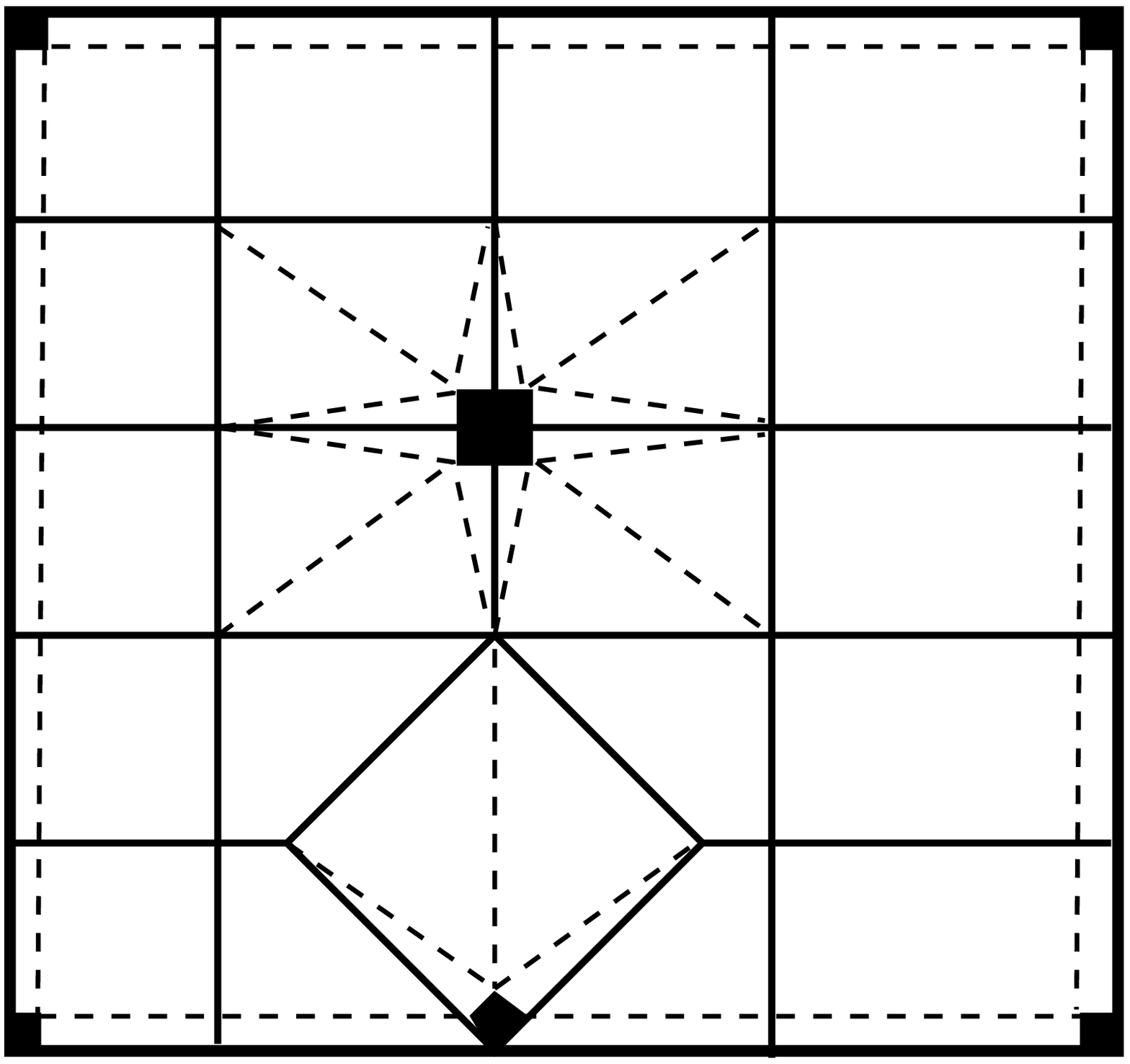}}
\psfragscanoff
\caption{
\label{fig:notation_lifting} 
Left: Notation for the construction of the lifting ${\mathcal L}_0$. Right: Example of a spectral boundary layer mesh. Solid lines indicate macro elements, dashed ones mesh lines of the refinement patterns, solid regions contain geometric refinement.}  
\end{figure}
The following lemma constructs a lifting ${\mathcal L}_0$ as required in Section~\ref{sec:abstract}. 
${\mathcal L}_0$ is controlled in the stronger norm $\|\cdot\|_+$ given by  
\begin{equation}
\label{eq:norm-mathcalL}
\|{\mathcal L}_0\| \leq 
\|{\mathcal L}_0\|_+:= \sup_{v \in V_N \colon v|_{\partial\Omega_0} \ne 0} 
\frac{\| {\mathcal L}_0 v\|_{L^\infty(\Omega)} + \varepsilon^{1/2} |{\mathcal L}_0 v|_{H^1(\Omega\setminus\Omega_0)} + 
\varepsilon^{-1/2} \|{\mathcal L}_0 v\|_{L^2(\Omega\setminus\Omega_0)}}{\|v\|_{L^\infty(\partial\Omega_0)}}. 
\end{equation}
\begin{lemma}
\label{lemma:L0} 
Let ${\mathcal T}(\kappa,{\mathbf L})$ be a spectral boundary layer mesh (Def.~\ref{def:bdylayer-mesh}). 
Set $V_N:= S^{p,1}_0({\mathcal T}(\kappa,{\mathbf L}))$. There exists a lifting operator 
${\mathcal L}_0: V_N|_{\partial\Omega_0} \rightarrow V_N$ with 
\begin{align}
\|{\mathcal L}_0\| \leq 
\|{\mathcal L}_0\|_+
& \leq C  \left( p^2 \frac{\varepsilon^{1/2}}{{\kappa^{1/2}}}  + \frac{\kappa^{1/2}}{\varepsilon^{1/2}}\right). 
\end{align}
The constant $C > 0$ depends only on the shape-regularity of the macro-triangulation ${\mathcal T}^{\M}$. 
\end{lemma}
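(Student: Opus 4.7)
The plan is to construct $\mathcal{L}_0$ patch-by-patch on each of the reference configurations in Definition~\ref{def:admissible-patterns}, using elementary polynomial lifts, and then transplant to $\Omega\setminus\Omega_0$ via the analytic macro-element maps $F_{K^\M}$. Since the macro maps are analytic diffeomorphisms on the fixed macro-triangulation $\mathcal{T}^\M$, they introduce only multiplicative constants in $L^\infty$-, $L^2$-, and $H^1$-seminorm estimates. Hence it suffices to establish the claimed bound in reference coordinates, patch by patch, and to verify compatibility on shared edges.

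For the \emph{boundary layer patch}, let $\widehat g(x) := v(x,\kappa) \in \mathcal{P}_p$ be the polynomial trace of $v$ on the interface $(0,1) \times \{\kappa\}$. I set
\[
\mathcal{L}_0 v(x,y) := \widehat g(x) \cdot \frac{y}{\kappa}, \qquad (x,y) \in (0,1) \times (0,\kappa).
\]
This lies in $\mathcal{Q}_p$, vanishes on $y=0$ (preimage of a portion of $\partial\Omega$), and agrees with $v$ at the interface. Direct integration gives $\|\mathcal{L}_0 v\|_{L^\infty} \leq \|v\|_{L^\infty(\partial\Omega_0)}$, $\|\mathcal{L}_0 v\|_{L^2}^2 \lesssim \kappa\, \|v\|_{L^\infty(\partial\Omega_0)}^2$, and $\|\partial_y \mathcal{L}_0 v\|_{L^2}^2 \lesssim \kappa^{-1}\|v\|_{L^\infty(\partial\Omega_0)}^2$, while the Markov inequality $\|\widehat g'\|_{L^\infty(0,1)} \lesssim p^2 \|\widehat g\|_{L^\infty(0,1)}$ yields $\|\partial_x \mathcal{L}_0 v\|_{L^2}^2 \lesssim p^4 \kappa\, \|v\|_{L^\infty(\partial\Omega_0)}^2$. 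Weighting by $\varepsilon^{\pm 1/2}$ and using $\sqrt{\kappa}\leq 1/\sqrt{\kappa}$ (since $\kappa \leq 1/2$) reproduces the claimed bound on this patch.

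For the \emph{tensor product, mixed, and geometric patches}, I apply the same 1D lift on each anisotropic element, based on the polynomial trace on its interface with the adjoining large element. On the corner region $(0,\kappa)^2$---which may carry further geometric refinement but, being a single polynomial, remains piecewise polynomial of the required degree---I set
\[
\mathcal{L}_0 v(x,y) := c_\ast \cdot \frac{xy}{\kappa^2}, \qquad c_\ast := v(\kappa,\kappa).
\]
This vanishes on the edges $\{x=0\}$ and $\{y=0\}$ that may lie on $\partial\Omega$, and its traces on $\{x=\kappa, y\in(0,\kappa)\}$ and $\{y=\kappa, x\in(0,\kappa)\}$ equal $c_\ast y/\kappa$ and $c_\ast x/\kappa$ respectively---precisely the traces of the adjoining anisotropic lifts at those two interfaces, so continuity across them is automatic. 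The new contributions from the corner region are $\mathcal{O}(1)\|v\|_{L^\infty(\partial\Omega_0)}$ in $L^\infty$ and in the $H^1$-seminorm, and $\mathcal{O}(\kappa)\|v\|_{L^\infty(\partial\Omega_0)}$ in $L^2$; all are absorbed by $p^2 \varepsilon^{1/2}/\kappa^{1/2} + \kappa^{1/2}/\varepsilon^{1/2}$ for $\kappa \leq 1/2$, $p \geq 1$. Minor variants of this choice handle the mixed patch (one edge on $\partial\Omega$) and the geometric patch (vertex-only boundary contact).

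The main technical obstacle is the compatibility of the various patchwise lifts across shared interface edges, which is required for $\mathcal{L}_0 v$ to lie in $V_N$. Within a patch, this is handled by the tailored corner-region blending factor above, whose traces dovetail with the 1D anisotropic lifts. Across different macro-elements, consistency on edges interior to $\Omega_0$ is automatic (the lift is $v$ there), while on edges shared by two non-$\Omega_0$ parts of neighboring patches, the construction reduces on both sides to the same 1D lift of the common polynomial trace. A final check is that the degree count is preserved under the affine refinement-pattern maps, so that the assembled lift indeed belongs to $S^{p,1}_0(\mathcal{T}(\kappa,\mathbf{L}))$; summing the patchwise contributions then yields the asserted estimate.
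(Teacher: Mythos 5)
Your construction is correct and follows essentially the same route as the paper: a patchwise definition on the reference refinement patterns, with the trace polynomial blended linearly into the thin direction (the paper packages this as a generic edge-to-element lifting in its Lemma~\ref{lemma:lifting}), Markov's inequality supplying the $p^2$, and the $\mathcal{O}(\kappa)$ measure and $\mathcal{O}(1/\kappa)$ aspect ratio of $\Omega\setminus\Omega_0$ supplying the $\kappa^{\pm 1/2}$ factors. Your explicit formulas and the interface-compatibility check reproduce the paper's vertex/edge/element assembly in a slightly more concrete form.
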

\begin{proof}
The lifting ${\mathcal L}_0 u$ is constructed patchwise, i.e., for each $K^{\M} \in {\mathcal T}^{\M}$
separately. To fix ideas, we construct the lifting for one refinement pattern only, namely, for 
a macro-element $K^{\M}$ that corresponds to the mixed patch of 
Fig.~\ref{fig:concavepatch-geometricpatch} (left). We use the vertices $\widehat V_i$, $i=1,\ldots,8$ 
as shown in Fig.~\ref{fig:notation_lifting} (left). 
It is convenient to introduce 
$\partial \widehat \Omega_0:= F_{K^{\M}}^{-1}(\partial\Omega_0 \cap \overline{K^{\M}})$. 
This set consists of the union of the closure of edges and possibly single points. 
For the present case of the mixed patch it contains at least the two edges 
$\widehat e_1:= (\widehat V_5, \widehat V_6)$ and $\widehat e_2:= (\widehat V_5, \widehat V_7)$. 
Let $u \in V_N|_{\partial{\Omega_0}}$.  We denote by 
$\widehat u$ the pull-back of $u$ to the reference configuration, i.e., $\widehat u:= u \circ F_{K^{\M}}$, 
which is defined on $\partial\widehat\Omega_0$. 
We now proceed to define the lifting 
${\mathcal L}_0 u$ by prescribing its extension $\widehat{\mathcal L}_0 \widehat u$ on the reference 
patch. This is achieved in two steps: first, the values of $\widehat{\mathcal L}_0 \widehat u$ 
are fixed on the edges of the macro-element; in a second step, the values of the edges are lifted 
to the elements. Let $V_i:=F_{K^\M}(\widehat V_i)$ denote the image of $\widehat V_i$ under the patch map.  
We define $\widehat{\mathcal L}_0 \widehat u$ in the nodes $\widehat V_i$, $i=1,\ldots,7$, 
as follows: If $V_i \in \partial{\Omega_0}$, then 
$(\widehat {\mathcal L}_0 \widehat u)(\widehat V_i) := \widehat u(\widehat V_i) = u(V_i)$. 
If $V_i \not\in \partial{\Omega_0}$, then $(\widehat {\mathcal L}_0 \widehat u)(\widehat V_i):= 0$.  
Next, we define the values of $\widehat{\mathcal L}_0 \widehat u$ on the $9$ edges given by 
$(\widehat V_1,\widehat V_2)$, 
$(\widehat V_2,\widehat V_3)$, 
$(\widehat V_1,\widehat V_4)$, 
$(\widehat V_4,\widehat V_7)$, 
$(\widehat V_2,\widehat V_5)$, 
$(\widehat V_4,\widehat V_5)$,
$(\widehat V_3,\widehat V_6)$, 
$(\widehat V_6,\widehat V_8)$, 
$(\widehat V_7,\widehat V_8)$. 
If the push-forward of an edge lies in $\partial{\Omega_0}$, then 
$\widehat{\mathcal L}_0\widehat u$ is already defined by $\widehat u$. If the push-forward does not lie 
in $\partial{\Omega_0}$, then we let $\widehat{\mathcal L}_0 \widehat u$ be the linear interpolant between
the values at the endpoints (which we have defined above already). 
This defines $\widehat{\mathcal L}_0 \widehat u$ in particular on the 
boundary $\partial (0,1)^2$ of the reference refinement pattern. Consider the case $L = 0$, i.e., 
the square $(0,\kappa)^2$ is not further refined. Then $\widehat{\mathcal L}_0 \widehat u$ is already determined 
on all edges of the reference patch, and we can lift from the edges to the elements on which 
$\widehat{\mathcal L}_0 \widehat u$ is still undefined. A fairly standard lifting, which we describe in 
Lemma~\ref{lemma:lifting} for the reader's convenience, ensures  
\begin{equation}
\label{eq:gradient}
\| \widehat{\mathcal L}_0 \widehat u\|_{L^\infty(\widehat S)} \leq 
C\max_{(x,y) \in \partial \widehat\Omega_0}  | \widehat u(x,y)|, 
\qquad 
\|\nabla  \widehat{\mathcal L}_0 \widehat u\|_{L^\infty(\widehat S)} \leq C \kappa^{-1} p^2 
\max_{(x,y) \in \partial\widehat\Omega_0} | \widehat u(x,y)|. 
\end{equation}
Here, the factor $\kappa^{-1}$ appears since the lifting is done on elements of aspect ratio 
$\mathcal {O}(1/\kappa)$. 
We note that the thus defined function $\widehat{\mathcal L}_0 \widehat u$ is also a continuous, piecewise polynomial
of degree $p$ if the square $(0,\kappa)^2$ is refined geometrically with $L > 0$ layers. 
Thus, we have constructed a lifting.  Noting that $F_{K^{\M}}^{-1}(K^{\M} \setminus\Omega_0) 
\subset (0,1)^2 \setminus (\kappa,1)^2$ for the mixed patch, we infer
\begin{align*}
\| {\mathcal L}_0 u\|_{L^2(K^{\M}\setminus\Omega_0)} & \leq C \sqrt{\kappa} 
\max_{(x,y) \in \overline{K^\M} \cap \partial\Omega_0} | u(x,y)|,  \\
\|\nabla  {\mathcal L}_0  u\|_{L^2(K^{\M}\setminus\Omega_0)} & \leq C \kappa^{-1/2} p^2 
\max_{(x,y) \in \overline{K^\M} \cap \partial\Omega_0} | u(x,y)|. 
\end{align*}
In this way, we construct the lifting for each refinement pattern and consequently patch by patch. It is essential to note that our assumptions 
on the refinement patterns are such that the patchwise defined lifting is continuous across patch boundaries, i.e., 
it is actually in 
$S^{p,1}_0({\mathcal T}(\kappa,{\mathbf L}))$. 
\end{proof}
We will also need a second lifting operator: 
\begin{lemma}
\label{lemma:widetilde_L0}
There is a lifting ${\mathcal L}_{CL}: V_N|_{\partial\Omega_{CL}} \rightarrow V_N$
with the following property: 
\begin{align}
\label{eq:lemma:widetilde_L0-10}
\|{\mathcal L}_{CL}\| := \!\! \sup_{v \in V_N: v|_{\partial\Omega_{CL}}\neq 0} \!\!
\frac{|{\mathcal L}_{CL} v|_{H^1(\Omega_{CL})} + \varepsilon^{-1} \|{\mathcal L}_{CL} v\|_{L^2(\Omega_{CL})}+ \|{\mathcal L}_{CL} v\|_{L^{\infty}(\Omega_{CL})}} {\|v\|_{L^\infty(\partial\Omega_{CL})} }
\leq C \left[ p^2 + \frac{\kappa}{\varepsilon} \right].
\end{align}
\end{lemma}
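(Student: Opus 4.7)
The construction of ${\mathcal L}_{CL}$ follows the patchwise template of Lemma~\ref{lemma:L0}. On every macro-element $K^\M$ whose reference pattern contains a corner-layer region (tensor product, mixed, or geometric patch), I construct $\widehat{\mathcal L}_{CL}\hat v$ on the reference configuration $\widehat S$ and transplant via the fixed analytic map $F_{K^\M}$. In reference coordinates the corner-layer region is $(0,\kappa)^2$, possibly geometrically refined with grading factor $\sigma$ toward $(0,0)$; its outermost layer of elements---those abutting the interface $\{x=\kappa\}\cup\{y=\kappa\}$---consists of $\mathcal{O}(1)$ shape-regular elements of diameter comparable to $\kappa$.

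The lifting is defined in two steps. First, I prescribe nodal and edge values throughout $\widehat\Omega_{CL}$: on the vertices and edges of $\partial((0,\kappa)^2)$ I take the given trace $\hat v$, while at every vertex and on every edge strictly interior to the geometric refinement of $(0,\kappa)^2$ I set the values to zero. Second, I lift these edge values element by element by the standard reference-element polynomial edge lifting already invoked in the proof of Lemma~\ref{lemma:L0}. By construction $\widehat{\mathcal L}_{CL}\hat v$ vanishes identically on every element of the geometric refinement that is not in the outermost layer, so its support inside $\widehat\Omega_{CL}$ is exactly that outer layer. Outside $\widehat\Omega_{CL}$ I extend by the very same edge-lifting procedure employed in Lemma~\ref{lemma:L0} in order to guarantee ${\mathcal L}_{CL}v\in S^{p,1}_0({\mathcal T}(\kappa,{\mathbf L}))$; the values there are irrelevant since the norm in \eqref{eq:lemma:widetilde_L0-10} is measured only on $\Omega_{CL}$.

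For the quantitative bound, the reference-element edge lifting gives, on each outer-layer element $\hat K$ pulled back to the reference square or triangle,
\begin{equation*}
\|\widehat{\mathcal L}_{CL}\hat v\|_{L^\infty(\hat K)} \leq C\|\hat v\|_{L^\infty(\partial\hat K)}, \qquad |\widehat{\mathcal L}_{CL}\hat v|_{H^1(\hat K)} \leq C p^2 \|\hat v\|_{L^\infty(\partial\hat K)}.
\end{equation*}
In two dimensions the $H^1$-seminorm is scale invariant under affine maps and the outer-layer elements are shape regular of diameter $\sim\kappa$, so the $L^2$-norm inherits an extra factor $\kappa$ from the area; transplantation via the analytic macro map $F_{K^\M}$ affects only absolute constants. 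Summing over the $\mathcal{O}(1)$ outer-layer elements and over the finite macro-triangulation therefore yields
\begin{equation*}
\|{\mathcal L}_{CL}v\|_{L^\infty(\Omega_{CL})} + |{\mathcal L}_{CL}v|_{H^1(\Omega_{CL})} + \varepsilon^{-1}\|{\mathcal L}_{CL}v\|_{L^2(\Omega_{CL})} \leq C\bigl(1 + p^2 + \kappa/\varepsilon\bigr)\|v\|_{L^\infty(\partial\Omega_{CL})},
\end{equation*}
which is the claimed bound once the $\mathcal{O}(1)$ term is absorbed into $p^2$.

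The only delicate step is maintaining continuity between the outermost layer (where the lift is a genuine polynomial) and the inner layers of the geometric corner refinement (where the lift is identically zero); this is automatic because my choice of boundary data sets the outer-layer lift to vanish along every edge shared with a deeper layer. Consistency across macro-patch boundaries and the homogeneous Dirichlet condition in case $(0,0)\in\partial\Omega$ follow from the fact that $v\in V_N|_{\partial\Omega_{CL}}$ is itself continuous and vanishes on $\partial\Omega$, so the trace data feeding the patchwise construction is globally compatible.
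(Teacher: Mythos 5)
Your overall strategy---patchwise construction on the reference configuration, the standard edge lifting of Lemma~\ref{lemma:lifting}, scale invariance of the $H^1$-seminorm in 2D and the area factor $\kappa$ for the $L^2$-norm---coincides with the paper's, and your quantitative bookkeeping is correct. (The paper differs only in a detail: it lifts the data to the whole square $(0,\kappa)^2$ as a single degree-$p$ polynomial, which is automatically a member of the piecewise polynomial space on the geometrically refined mesh, whereas you support the lift on the outermost layer only; either choice yields the bound $p^2+\kappa/\varepsilon$.)

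There is, however, a concrete defect in your prescription of the edge data, which is the one nontrivial point of the construction. First, the trace $\hat v$ is not available on all of $\partial((0,\kappa)^2)$: the edges $\widehat e_3=(0,\kappa)\times\{0\}$ and $\widehat e_4=\{0\}\times(0,\kappa)$ are either mapped into $\partial\Omega$ (where the data is indeed zero) or are mesh lines \emph{interior} to $\Omega_{CL}$, on which $v|_{\partial\Omega_{CL}}$ carries no information, so ``take the given trace'' is undefined there. Second, and more seriously, setting the data to zero on every edge strictly interior to the refinement of $(0,\kappa)^2$ is incompatible with the nonzero vertex values you retain on $\widehat e_1\cup\widehat e_2$: an interior edge of the outermost layer (e.g.\ the edge $\{\sigma\kappa\}\times(\sigma\kappa,\kappa)$ separating two outer-layer elements) has an endpoint on $\widehat e_1$ where $\hat v$ need not vanish, so your edgewise data is discontinuous at that vertex; Lemma~\ref{lemma:lifting} (which requires $f\in C(\partial\widehat K)$) then does not apply, and the elementwise lifts fail to match across that edge, so the function you build is not in $H^1$. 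The repair is exactly the device already used in the proof of Lemma~\ref{lemma:L0} and in the paper's proof here: on every edge where no trace is prescribed, take the \emph{linear interpolant} of the already-fixed endpoint values, with the value $0$ assigned at the origin. This makes the data continuous, determines it identically from either side of any shared edge (also across macro-patches), and leaves your $L^\infty$, $H^1$ and $L^2$ estimates unchanged; with that modification your argument goes through.
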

\begin{proof}
The lifting is again constructed patchwise. For simplicity, we will not construct the lifting to $\Omega$ 
but only to $\Omega_{CL}$, since we are only interested in $({\mathcal L}_{CL} u)|_{\Omega_{CL}}$. 

We observe that $\Omega_{CL}$ is the union of images of the square $(0,\kappa)^2$ under certain macro-element maps
and that push-forwards of the lines $\widehat e_1:= (0,\kappa) \times \{\kappa\}$ 
and $\widehat e_2:= \{\kappa\} \times (0,\kappa)$ form part 
of the boundary of $\Omega_{CL}$. The remaining two lines $\widehat e_3:= (0,\kappa) \times \{0\}$ 
and $\widehat e_4:= \{0\} \times  (0,\kappa)$
are either mapped to subsets  of $\partial\Omega$ or are meshlines that are completely inside $\Omega_{CL}$. 
On $(0,\kappa)^2$ we define ${\mathcal L}_{CL}$ as follows: Let $u \in V_N|_{\partial\Omega_{CL}}$ 
and $\widehat u$ be its pull-back under the macro-element map $F_{K^\M}$. 
Fix $\widehat {\mathcal L}_{CL} \widehat u$ to coincide with $\widehat u$ on the lines $\widehat e_1$ and $\widehat e_2$, 
to be zero in $(0,0)$, and to be the linear interpolant on the 
remaining two edges $\widehat e_3$ and $\widehat e_4$. 
Finally, $\widehat{\mathcal L}_{CL} \widehat u$ is lifted to $(0,\kappa)^2$ by a standard lifting, e.g., the one 
constructed in Lemma~\ref{lemma:lifting}. 
We conclude 
\begin{equation}
\label{eq:lemma:widetilde_L0-50}
\|\widehat {\mathcal L}_{CL} \widehat u\|_{L^\infty((0,\kappa)^2)} + p^{-2} \kappa
\|\nabla \widehat {\mathcal L}_{CL} \widehat u\|_{L^\infty((0,\kappa)^2)} 
\lesssim \|\widehat u\|_{L^\infty(\widehat e_1 \cup \widehat e_2)}. 
\end{equation}
Transforming to $F_{K^\M}((0,\kappa)^2)$ yields (\ref{eq:lemma:widetilde_L0-10}). 
\end{proof}
By selecting $\kappa = {\mathcal O}(p\varepsilon)$ in the spectral boundary layer meshes ${\mathcal T}(\kappa,{\mathbf L})$, 
one can construct an approximation $\Pi u \in S^{p,1}_0({\mathcal T}(\kappa,{\mathbf L}))$ with the following approximation properties: 
\begin{proposition}
\label{prop:thm.3.4.8} 
For parameters $\lambda > 0$ and polynomial degrees $p$ consider meshes 
${\mathcal T}(\min\{\lambda p \varepsilon,1/2\},{\mathbf L})$. 
Let $L$ be defined as $L:=\min\{{L}_{K^\M}\,|\, K^{\M}\,\text{s.t.}\, F_{K^\M}(0,0)\,\text{is a vertex of }\, \Omega\}$.
Then, there exist $\lambda_0 > 0$, $b$, $C > 0$ 
depending only on $\Omega$, $A$, $c$, and $f$ such that the 
following is true: For every $\lambda \in (0,\lambda_0]$  
there exists an approximation 
$\Pi u \in S^{p,1}_0({\mathcal T}(\min\{\lambda p \varepsilon,1/2\},{\mathbf L}))$ such that 
\begin{subequations}
\begin{align}
\label{prop:thm.3.4.8-i}
\|u - \Pi u\|_{L^\infty(\Omega)} 
&\leq C p^4 \left( \lambda^{-1/2} e^{-b \lambda p} +  p e^{-bL} \right), \\
\label{prop:thm.3.4.8-ia}
\|u - \Pi u\|_{L^\infty(\Omega\setminus\Omega_{CL})} 
&\leq C p^4  \lambda^{-1/2} e^{-b \lambda p}, \\
\label{prop:thm.3.4.8-ii}
\varepsilon^{1/2} \|\nabla( u - \Pi u)\|_{L^2(\Omega)} &\leq C p^4 \left(  \lambda^{-1/2} e^{- b\lambda  p} 
+ \sqrt{\varepsilon} p^2 e^{-bL}\right). 
\end{align}
\end{subequations}
Furthermore, for arbitrary $\overline{x} \in \BbbR^2$ we have
\begin{align}
\|\nabla (u - \Pi u)\|_{L^2(B_{\lambda p \varepsilon}(\overline{x})\cap\Omega_{CL})} 
&\leq C p^4 \left( \lambda^{-1/2} e^{-b \lambda p} + p^2 e^{-bL} \right), \\
\|\nabla (u - \Pi u)\|_{L^2(B_{\lambda p \varepsilon}(\overline{x})\cap(\Omega\backslash\Omega_{CL}))} 
&\leq C p^4 \lambda^{-1/2} e^{-b \lambda p}. 
\end{align}
\end{proposition}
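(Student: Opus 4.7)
My plan is to follow the construction in \cite[Sec.~{3.4.2}]{melenk02} closely and refine its analysis to obtain the stated local and $L^\infty$-bounds. The starting point is the analytic regularity decomposition
$$
u = w + u^{BL} + u^{CL},
$$
where $w$ is analytic on $\overline\Omega$, $u^{BL}$ is a sum of edgewise boundary layer functions (analytic in the corresponding stretched normal coordinate), and $u^{CL}$ collects the corner layers associated with the vertices of $\Omega$. Each summand satisfies sharp bounds in the appropriate countably normed analytic class. Such a decomposition for the present piecewise analytic geometries is available in the literature on singularly perturbed problems.

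I would then construct $\Pi u$ patch by patch on ${\mathcal T}^{\M}$, consistent with the refinement pattern chosen for each macro-element. On a trivial patch (and on the large sub-element of any non-trivial patch) I use $p$-version polynomial interpolation of the analytic part $w$, yielding exponential convergence of order $e^{-bp}$ in both $L^\infty$ and the $\varepsilon$-weighted $H^1$-seminorm. On the two anisotropic sub-elements of a boundary layer, tensor product, mixed, or geometric patch I use a tensor product Gauss--Lobatto interpolant of $u^{BL}$ in the stretched variable $y/\kappa$: the calibration $\kappa=\min\{\lambda p\varepsilon, 1/2\}$ with $\lambda\le\lambda_0$ sufficiently small is precisely what converts analyticity in the stretched coordinate into the rate $e^{-b\lambda p}$. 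Finally, the corner square $(0,\kappa)^2$ in a tensor product, mixed, or geometric patch is handled by the standard geometric $hp$ approximation with $L$ layers and grading $\sigma$, delivering $e^{-bL}$ for the corner contribution. Continuity across inter-patch boundaries follows from the compatibility requirements in Definition~\ref{def:bdylayer-mesh}, so the patchwise construction globally yields an element of $S^{p,1}_0({\mathcal T}(\kappa,{\mathbf L}))$.

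Summation of the patchwise estimates then produces \eqref{prop:thm.3.4.8-ii}: the contribution of $w$ and $u^{BL}$ is bounded by $\lambda^{-1/2}e^{-b\lambda p}$ (the $\lambda^{-1/2}$ reflects the norm of the pullback on anisotropic elements), while the corner part contributes $\sqrt{\varepsilon}\,p^2 e^{-bL}$ because the $H^1$ seminorm on the $\kappa\times\kappa$ geometrically refined block is measured with the $\sqrt{\varepsilon}$ weight and a standard $p$-inverse inequality introduces the $p^2$. The $L^\infty$ bounds \eqref{prop:thm.3.4.8-i} and \eqref{prop:thm.3.4.8-ia} are obtained by combining the $L^2$-type bounds with inverse/Sobolev inequalities on reference elements, which account for the $p^4$ factor. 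The fact that the $e^{-bL}$ term is absent in \eqref{prop:thm.3.4.8-ia} is crucial and follows from the observation that $u^{CL}$ is localized to $\Omega_{CL}$ up to an analytically decaying tail that is already absorbed into the $e^{-b\lambda p}$ contribution.

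The main obstacle is the localized estimates on $B_{\lambda p\varepsilon}(\overline x)$. Since $\lambda p\varepsilon \simeq \kappa$, such a ball meets only $O(1)$ macro-elements and is comparable in size to one corner element and to the short side of an anisotropic element. The strategy is to re-run the patchwise approximation analysis but to replace each global summand estimate by its restriction to the intersection of the ball with the relevant sub-element. For the smooth and boundary layer parts, the local $L^2$-norm of the gradient error on a set of area $\lesssim (\lambda p\varepsilon)^2$ gains a factor $\lambda p \varepsilon$ compared to the global $L^2$ estimate, which when combined with the $L^\infty$-level error converts $\lambda^{-1/2}e^{-b\lambda p}$ in energy into $\lambda^{-1/2}e^{-b\lambda p}$ in the local $L^2$-gradient (after the $\varepsilon^{-1/2}$-rescaling implicit in the balanced norm). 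The corner contribution survives in the ball only when the ball meets $\Omega_{CL}$, giving the $p^2 e^{-bL}$ in the first inequality and its absence in the second. Carrying out these bookkeeping steps elementwise, using the patchwise constructions above, yields the two local bounds and completes the proof.
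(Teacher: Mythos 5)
Your overall architecture (decompose $u$, approximate each component patchwise, calibrate $\kappa=\min\{\lambda p\varepsilon,1/2\}$ to turn layer analyticity into $e^{-b\lambda p}$, geometric refinement for the corner part) matches the paper's starting point, which is an inspection of \cite[Thm.~{3.4.8}]{melenk02} applied to $u=w_\varepsilon+\chi^{BL}u^{BL}_\varepsilon+\chi^{CL}u^{CL}_\varepsilon+r_\varepsilon$ (note you drop the exponentially small remainder $r_\varepsilon$ and the cut-offs, which is a minor but real omission). However, you miss the one genuinely new ingredient that the proposition requires beyond the cited result: the interpolants of the boundary-layer and corner-layer parts must be \emph{modified}. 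The paper replaces $\Pi^\infty_{p,{\mathcal T}}\chi^{BL}u^{BL}_\varepsilon$ by zero on $\Omega_0$ and by $\Pi^\infty_{p,{\mathcal T}}\chi^{BL}u^{BL}_\varepsilon-{\mathcal L}_0(\Pi^\infty_{p,{\mathcal T}}\chi^{BL}u^{BL}_\varepsilon)$ on $\Omega\setminus\Omega_0$, and analogously for the corner layer with ${\mathcal L}_{CL}$ on $\Omega_{CL}$. This correction is what yields the balanced-norm bound \eqref{prop:thm.3.4.8-ii}: the available global estimate for the interpolation error of the boundary layer is an $L^\infty$ gradient bound of size $(\lambda p\varepsilon)^{-1}C_pp^2e^{-b\lambda p}$, which on the $O(1)$-measure set $\Omega_0$ gives $\varepsilon^{1/2}\|\nabla(\cdot)\|_{L^2(\Omega_0)}\lesssim (\lambda p)^{-1}\varepsilon^{-1/2}C_pp^2e^{-b\lambda p}$ and blows up as $\varepsilon\to0$; zeroing the approximant on $\Omega_0$ replaces this by the decay of the exact layer, which is harmless. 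The same device is what makes the $e^{-bL}$ term disappear in \eqref{prop:thm.3.4.8-ia} and in the second local gradient bound.

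The concrete failure point in your write-up is the sentence claiming that continuity across inter-patch boundaries ``follows from the compatibility requirements.'' Those requirements only guarantee mesh compatibility. If, as your construction suggests, you approximate only $w$ on the large elements and the full layer on the anisotropic/corner elements, the resulting piecewise polynomial is discontinuous across $\partial\Omega_0$ and $\partial\Omega_{CL}$ (the Gauss--Lobatto trace of the layer interpolant there is $O(e^{-b\lambda p})$ but not zero), so it is not in $S^{p,1}_0$. Restoring continuity is exactly the role of the discrete lifting operators of Lemmas~\ref{lemma:L0} and \ref{lemma:widetilde_L0}, and quantifying their norms, $\|{\mathcal L}_0\|\lesssim p^2\sqrt{\varepsilon/\kappa}+\sqrt{\kappa/\varepsilon}\lesssim \lambda^{-1/2}p^{3/2}$ and $\|{\mathcal L}_{CL}\|\lesssim p^2+\kappa/\varepsilon$, is a substantive part of the proof that produces several of the algebraic $p$-factors in the stated bounds. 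Without introducing and estimating these liftings (or an equivalent mechanism), your argument either produces a nonconforming approximant or, if you interpolate the layers everywhere, fails to deliver \eqref{prop:thm.3.4.8-ii} and the $L$-independent estimates away from $\Omega_{CL}$. Your attribution of the $p^4$ to ``inverse/Sobolev inequalities'' is also off: it comes from the stability constant $C_p=p(1+\ln p)$ of the interpolation operator combined with a $p^2$, via $p^3(1+\ln p)\lesssim p^4$.
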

\begin{proof} 
The result relies on a careful inspection of \cite[Thm.~{3.4.8}]{melenk02} and a modification
of the boundary layer approximation that goes back to \cite{schwab-suri96} for the 1D case (see also 
\cite{melenk-xenophontos16}). Of interest to us and in our tracking of the procedure in 
\cite[Thm.~{3.4.8}]{melenk02} is the case that $\lambda p \varepsilon$ is sufficiently small, since in
 the case $\lambda p \varepsilon \gtrsim 1$, we may estimate $\varepsilon^{-1} \lesssim \lambda p$ and absorb
powers of $\lambda p$ in the exponentially decaying term $e^{-b \lambda p}$. It is also that regime
that is responsible for the factor $\sqrt{\varepsilon} p^2 e^{-bL}$ instead of a factor 
$\sqrt{\varepsilon} p e^{-bL}$ in (\ref{prop:thm.3.4.8-ii}). 
We emphasize that in the course of the proof, the constants $C$, $b>0$ may be different in each occurrence. 
We also mention that the factor $p^4$ results from simply estimating  $p^3 (1 + \ln p) 
\lesssim p^4$. 

Let $\Pi^\infty_{p,{\mathcal T}}$ be the approximation operator employed in the proof of 
\cite[Thm.~{3.4.8}]{melenk02}.  This operator has the following stability properties by 
\cite[Thm.~{3.2.20}]{melenk02}: Its pull-back $\Pi^\infty_{p,\widehat K}$ 
to the reference element $\widehat K$ (which can be either 
the reference triangle or the reference square) satisfies 
\begin{align}
\label{eq:Pi_infty-1}
& (\Pi^\infty_{p,\widehat K} v)|_e 
\quad \mbox{ coincides with the Gauss-Lobatto interpolant of $v|_e$ on each edge $e$ of 
$\widehat K$}, \\
\label{eq:Pi_infty-2}
&\|v - \Pi^\infty_{p,\widehat K} v\|_{L^\infty(\widehat K)} 
\lesssim C_p \inf_{w \in \Pi_p(\widehat K)} \|v - w\|_{L^\infty(\widehat K)}, 
\qquad C_p:= p (1 + \ln p), \\
\label{eq:Pi_infty-3}
&\|\nabla (v - \Pi^\infty_{p,\widehat K} v)\|_{L^2(\widehat K)} 
\lesssim \inf_{w \in \Pi_p(\widehat K)} \|\nabla(v - w)\|_{L^2(\widehat K)} + p^2 C_p \|v - w\|_{L^\infty(\widehat K)}.
\end{align}
We inspect the proof of \cite[Thm.~{3.4.8}]{melenk02}, which studies $u - \Pi^\infty_{p,{\mathcal T}} u$, and 
modify as needed. The exact solution $u$ of (\ref{eq:model-problem}) is written as 
$u = w_\varepsilon + \chi^{BL} u^{BL}_\varepsilon + \chi^{CL} u^{CL}_\varepsilon 
+ r_\varepsilon$, where $w_\varepsilon$ represents
the smooth part of an asymptotic expansion, $u^{BL}_\varepsilon$ 
the boundary layer part, $u^{CL}_\varepsilon$ the corner layer
and $r_\varepsilon$ the (exponentially small) remainder; the smooth cut-off functions $\chi^{BL}$, $\chi^{CL}$
localize near $\partial\Omega$ and the vertices of $\Omega$, respectively. 
(The properties of $w_\varepsilon$, $r_\varepsilon$, $u^{BL}_\varepsilon$, $u^{CL}_\varepsilon$ are 
detailed in \cite[Thm.~{2.3.4}]{melenk02}.)
The desired approximation $\Pi u$ will be 
constructed of the form 
\begin{equation}
\Pi u = \Pi^{\infty}_{p,{\mathcal T}} w_\varepsilon + \Pi^{\infty}_{p,{\mathcal T}} r_\varepsilon + 
\Pi^{BL} \chi^{BL} u^{BL}_\varepsilon + 
\Pi^{CL} \chi^{CL} u^{CL}_\varepsilon.  
\end{equation}
We analyze these $4$ terms in turn.

\emph{Treatment of $w_\varepsilon$:} 
The function $w_\varepsilon$ is analytic with $\|\nabla^n w_\varepsilon\|_{L^\infty(\Omega)} \leq C \gamma^n n!
\quad \forall n \in \BbbN_0$ with constants $C$, $\gamma>0$ independent of $\varepsilon$. This implies 
for the shape-regular elements $K \in {\mathcal T}^{large} \cup {\mathcal T}^{CL}$ with $h_K$ denoting
the element diameter 
\begin{equation}
\label{eq:prop:thm.3.4.8-100}
h_K^{-1} \|w_\varepsilon - \Pi^{\infty}_{p,{\mathcal T}} w_\varepsilon\|_{L^\infty(K)} + 
\|\nabla( w_\varepsilon - \Pi^{\infty}_{p,{\mathcal T}} w_\varepsilon) \|_{L^\infty(K)}
\leq C e^{-bp}. 
\end{equation}
For the anisotropic elements $K \in {\mathcal T}^{aniso}$, we get 
\begin{equation}
\label{eq:prop:thm.3.4.8-200}
\|w_\varepsilon - \Pi^{\infty}_{p,{\mathcal T}} w_\varepsilon\|_{L^\infty(K)} + 
\kappa \|\nabla( w_\varepsilon - \Pi^{\infty}_{p,{\mathcal T}} w_\varepsilon) \|_{L^\infty(K)}
\leq C e^{-bp}. 
\end{equation}
Since $\kappa = \lambda p \varepsilon$, the estimates 
(\ref{eq:prop:thm.3.4.8-100}), (\ref{eq:prop:thm.3.4.8-200}) provide the desired estimates for the 
contribution of $w_\varepsilon$. 

\emph{Treatment of $r_\varepsilon$:} \cite[Thm. 2.3.4]{melenk02} gives 
$\|r_{\varepsilon}\|_{L^{\infty}(\Omega)}+\|r_{\varepsilon}\|_{H^{1}(\Omega)}\lesssim  e^{-b/\varepsilon}$. 
\eqref{eq:Pi_infty-2}, \eqref{eq:Pi_infty-3}, the fact that elements have aspect ratio at most 
${\mathcal O}(1/(\lambda p \varepsilon))$ and the assumption $\lambda p \varepsilon \lesssim 1$ imply  
(cf. also the arguments leading to \cite[(3.4.25)--(3.4.27)]{melenk02}) 
\begin{equation}
\|r_\varepsilon - \Pi^\infty_{p,{\mathcal T}} r_\varepsilon\|_{L^\infty(\Omega)} + 
\sqrt{\lambda p \varepsilon} \|\nabla (r_\varepsilon - \Pi^\infty_{p,{\mathcal T}} r_\varepsilon)\|_{L^2(\Omega)} 
\leq C p^2 C_p e^{-b/\varepsilon} 
\leq C p^2 C_p e^{-b \lambda p}. 
\end{equation}
Since $\varepsilon \lesssim 1/(\lambda p)$ this last inequality implies additionally 
\begin{equation}
\|\nabla (r_\varepsilon - \Pi^\infty_{p,{\mathcal T}} r_\varepsilon)\|_{L^2(\Omega)} 
\lesssim  \frac{1}{(\lambda p \varepsilon)^{1/2}} p^2 C_p e^{-b/\varepsilon} 
\lesssim  \frac{1}{\lambda^{1/2}} p^{3/2} C_p e^{-b/\varepsilon} 
\lesssim  \frac{1}{\lambda^{1/2}} p^{3/2} C_p e^{-b\lambda p} .
\end{equation}
These estimates provide the desired estimates for the contribution of $r_\varepsilon$. 

\emph{Treatment of $\chi^{BL} u^{BL}_\varepsilon$:} 
From \cite[(3.4.28)]{melenk02} we get for the boundary layer part $\chi^{BL} u^{BL}_\varepsilon$ 
\begin{equation}
\label{eq:prop:thm.3.4.8-1000}
\|\chi^{BL} u^{BL}_\varepsilon  - \Pi^\infty_{p,{\mathcal T}} \chi^{BL} u^{BL}_\varepsilon\|_{L^\infty(\Omega)}  
+ 
\lambda p \varepsilon 
\|\nabla (\chi^{BL} u^{BL}_\varepsilon  - 
             \Pi^\infty_{p,{\mathcal T}} \chi^{BL} u^{BL}_\varepsilon)\|_{L^\infty(\Omega)} 
\lesssim C_p p^2 e^{-b \lambda p}. 
\end{equation}
The approximation $\Pi^\infty_{p,{\mathcal T}} \chi^{BL} u^{BL}_\varepsilon$ needs to be corrected 
in the spirit of \cite{schwab-suri96} in order to control $\varepsilon^{1/2} \|\nabla(u - \Pi u)\|_{L^2(\Omega)}$.  
Specifically, we approximate $\chi^{BL} u^{BL}_\varepsilon$ by 
\begin{equation}
\Pi^{BL} \chi^{BL} u^{BL}_\varepsilon:= 
\begin{cases}
0  & x \in \Omega_0 \\
\Pi^\infty_{p,{\mathcal T}} \chi^{BL} u^{BL}_\varepsilon - {\mathcal L}_0 \left(\Pi^\infty_{p,{\mathcal T}} \chi^{BL} u^{BL}_\varepsilon\right) & 
x \in \Omega\setminus \Omega_0, 
\end{cases}
\end{equation}
where ${\mathcal L}_0$ is the lifting operator of Lemma~\ref{lemma:L0}. Note that 
$(\Pi^{BL} \chi^{BL} u^{BL}_\varepsilon) |_{\partial\Omega}  = 
\Pi^\infty_{p,{\mathcal T}} \chi^{BL} u^{BL}_\varepsilon$. 
For the analysis of the approximation properties of $\Pi^{BL} \chi^{BL} u^{BL}_\varepsilon$, we 
introduce the shorthand notation 
$$
\widetilde u^{BL}:= \chi^{BL} u^{BL}_\varepsilon 
\qquad \mbox{ and } \qquad 
\widetilde u^{BL}_p:= \Pi^{\infty}_{p,{\mathcal T}} (\chi^{BL} u^{BL}_\varepsilon). 
$$
We use that $\operatorname*{dist}(\Omega_0,\partial\Omega) \ge \mu \lambda p \varepsilon$ 
for some $\mu > 0$ (cf. (\ref{eq:Omega0-away-from-partialOmega})). 
The decay properties of the boundary layer (cf. \cite[Thm.~{2.3.4}]{melenk02}) then read 
\begin{equation}
\label{eq:prop:thm.3.4.8-500}
\varepsilon^{-1/2} \|\widetilde u^{BL}\|_{L^2(\Omega_0)} + 
\varepsilon^{1/2} \|\nabla \widetilde u^{BL}\|_{L^2(\Omega_0)} +
\|\widetilde u^{BL} \|_{L^\infty(\Omega_0)} + \varepsilon \|\nabla \widetilde u^{BL}\|_{L^\infty(\Omega_0)} \lesssim e^{-b \lambda p}. 
\end{equation}
These estimates and the fact 
$\|\cdot\|_{L^2(B_{\lambda p \varepsilon}(\overline{x})\cap \Omega_0)}\lesssim 
\lambda p \varepsilon \|\cdot\|_{L^{\infty}(B_{\lambda p \varepsilon}(\overline{x})\cap\Omega_0)}$ 
produce the correct estimates for the  approximation of $\chi^{BL} u^{BL}_\varepsilon$
on $\Omega_0$. 

In order to analyze the error on $\Omega\setminus\Omega_0$ we need to 
control $\Pi^{\infty}_{p,{\mathcal T}} (\chi^{BL} u^{BL}_\varepsilon)$ on $\partial\Omega_0$. To that end, 
we note that the stability properties of $\Pi^{\infty}_{p,{\mathcal T}}$ given in (\ref{eq:Pi_infty-2})
and the fact that the elements 
in $\Omega_0$ are shape-regular and of size ${\mathcal O}(1)$ imply
\begin{align}
\label{eq:prop:thm.3.4.8-600}
\|\widetilde u^{BL}_p\|_{L^\infty(\Omega_0)} & \lesssim C_p \|\widetilde u^{BL}\|_{L^\infty(\Omega_0)} 
\lesssim C_p e^{-b \lambda p}.
\end{align}
By construction, we have on $\Omega\setminus\Omega_0$ 
$$
\widetilde u^{BL} -\Pi^{BL} (\chi^{BL} u^{BL}_\varepsilon) = 
\widetilde u^{BL} - \widetilde u^{BL}_p + {\mathcal L}_0 \widetilde u^{BL}_p. 
$$
Since $\operatorname*{meas}(\Omega\setminus\Omega_0) = \mathcal{O}(\lambda p \varepsilon)$
and $\|{\cdot}\|_{L^2(\Omega\backslash\Omega_0)} \lesssim 
\operatorname*{meas}(\Omega\setminus\Omega_0)^{1/2}\|{\cdot}\|_{L^{\infty}(\Omega\backslash\Omega_0)}$, we get from 
(\ref{eq:prop:thm.3.4.8-1000}) the following estimates for the first term $\widetilde u^{BL} - \widetilde u^{BL}_p$:  
\begin{align}
(\lambda p \varepsilon)^{-1/2} \|\widetilde u^{BL} - \widetilde u^{BL}_p\|_{L^2(\Omega\setminus\Omega_0)} + 
(\lambda p \varepsilon)^{1/2} \|\nabla (\widetilde u^{BL} - \widetilde u^{BL}_p) \|_{L^2(\Omega\setminus \Omega_0)} 
\lesssim C_p p^2 e^{-b \lambda p }. 
\end{align}
For the term ${\mathcal L}_0 \widetilde u^{BL}_p$, we use the estimates of Lemma~\ref{lemma:L0}
and (\ref{eq:prop:thm.3.4.8-600}) to arrive at 
\begin{align}
\varepsilon^{1/2} \|\nabla {\mathcal L}_0 \widetilde u^{BL}_p\|_{L^2(\Omega\setminus\Omega_0)} +  
\varepsilon^{-1/2} \|{\mathcal L}_0 \widetilde u^{BL}_p\|_{L^2(\Omega\setminus\Omega_0)}   + 
\|{\mathcal L}_0 \widetilde u^{BL}_p\|_{L^\infty(\Omega)} &\leq 
\|{\mathcal L}_0\| \|\widetilde u^{BL}_p\|_{L^{\infty}(\partial\Omega_0)}
\nonumber\\
&\leq C p^{3/2} \lambda^{-1/2} C_p e^{-b \lambda p }. 
\end{align}
As in the proof of Lemma~\ref{lemma:L0} (cf. (\ref{eq:gradient})), we obtain, since 
$\operatorname*{meas}(B_{\lambda p \varepsilon}(\overline{x})) = \mathcal{O}((\lambda p\varepsilon)^2)$, 
\begin{equation*}
\|\nabla {\mathcal L}_0 \widetilde u^{BL}_p\|_{L^2(B_{\lambda p \varepsilon}(\overline{x})\cap(\Omega\setminus\Omega_0))} 
\lesssim \lambda \varepsilon p \|\nabla {\mathcal L}_0 \widetilde u^{BL}_p\|_{L^{\infty}(B_{\lambda p \varepsilon}(\overline{x})\cap(\Omega\setminus\Omega_0))} 
\lesssim p^2\|\widetilde u^{BL}_p\|_{L^{\infty}(\partial\Omega_0)} \leq C p^2 C_p e^{-b\lambda p},
\end{equation*}
which allows us to conclude that the approximation $\Pi^{BL} (\chi^{BL} u^{BL}_\varepsilon)$ has the desired
properties. 

\emph{Treatment of $u^{CL}_\varepsilon$:} 
Finally, for the corner layer contribution we use \cite[(3.4.29)---(3.4.33)]{melenk02}.  
Again, we abbreviate 
$$
\widetilde u^{CL}:= \chi^{CL} u^{CL}_\varepsilon, 
\qquad \mbox{ and } \qquad 
\widetilde u^{CL}_p= \Pi^\infty_{p,{\mathcal T}} (\chi^{CL} u^{CL}_\varepsilon). 
$$
We infer directly from \cite[(3.4.33)]{melenk02}:
\begin{align}
\label{eq:prop:thm.3.4.8-750}
\|\widetilde u^{CL} - \widetilde u^{CL}_p\|_{L^\infty(\Omega_{CL})} + 
\|\nabla( \widetilde u^{CL} - \widetilde u^{CL}_p)\|_{L^2(\Omega_{CL})} 
&\lesssim C_p p^3 e^{-b L} + e^{-bp}. 
\end{align}
As in our treatment of the boundary layer part, we need to modify the approximation 
$\Pi^{\infty}_{p,{\mathcal T}} \chi^{CL} u^{CL}_\varepsilon$. We set 
\begin{equation}
\Pi^{CL} \chi^{CL} u^{CL}_\varepsilon:= 
\begin{cases}
0 
& x \in \Omega\setminus\Omega_{CL} \\
\Pi^\infty_{p,{\mathcal T}} \chi^{CL} u^{CL}_\varepsilon -  {\mathcal L}_{CL} \left(\Pi^\infty_{p,{\mathcal T}} \chi^{CL} u^{CL}_\varepsilon\right) & 
x \in \Omega_{CL}, 
\end{cases}
\end{equation}
where ${\mathcal L}_{CL}$ is the lifting operator of Lemma~\ref{lemma:widetilde_L0}. Note that 
$(\Pi^{CL} \chi^{CL} u^{CL}_\varepsilon) |_{\partial\Omega}  = 
\Pi^\infty_{p,{\mathcal T}} \chi^{CL} u^{CL}_\varepsilon$. 

The decay properties of the corner layer (cf. \cite[Thm.~{2.3.4}]{melenk02}) then read 
\begin{equation}
\label{eq:prop:thm.3.4.8-1500}
\varepsilon^{-1} \|\widetilde u^{CL}\|_{L^2(\Omega\setminus\Omega_{CL})} + 
\|\nabla \widetilde u^{CL}\|_{L^2(\Omega\setminus\Omega_{CL})} +
\|\widetilde u^{CL} \|_{L^\infty(\Omega\setminus\Omega_{CL})} 
+ \varepsilon \|\nabla \widetilde u^{CL}\|_{L^\infty(\Omega\setminus\Omega_{CL})} 
\lesssim e^{-b \lambda p}. 
\end{equation}
These estimates imply that our approximation of the corner layer contribution 
has the desired properties on $\Omega\setminus\Omega_{CL}$. 

The stability properties of $\Pi^{\infty}_{p,{\mathcal T}}$ yield
\begin{align}
\label{eq:prop:thm.3.4.8-1600}
\|\widetilde u^{CL}_p\|_{L^\infty(\Omega\setminus\Omega_{CL})} & \lesssim 
C_p \|\widetilde u^{CL}\|_{L^\infty(\Omega\setminus\Omega_{CL})} 
\lesssim C_p e^{-b \lambda p}. 
\end{align}
By construction we have on $\Omega_{CL}$  
$$
\chi^{CL} u^{CL}_\varepsilon -\Pi^{CL} (\chi^{CL} u^{CL}_\varepsilon) = 
\widetilde u^{CL} - \widetilde u^{CL}_p +   {\mathcal L}_{CL} \widetilde u^{CL}_p. 
$$
The estimates of (\ref{eq:prop:thm.3.4.8-750}) give the desired bounds for 
the contribution $\widetilde u^{CL} - \widetilde u^{CL}_p$. For the 
correction ${\mathcal L}_{CL} \widetilde u^{CL}_p$ we use Lemma~\ref{lemma:widetilde_L0} 
and the bound (\ref{eq:prop:thm.3.4.8-1600}) to get 
\begin{align*}
\|\nabla {\mathcal L}_{CL}  \widetilde u^{CL}_p \|_{L^2(\Omega_{CL})} + 
\varepsilon^{-1} \|{\mathcal L}_{CL}  \widetilde u^{CL}_p \|_{L^2(\Omega_{CL})} +
\|{\mathcal L}_{CL}  \widetilde u^{CL}_p \|_{L^\infty(\Omega_{CL})}  &\lesssim \|{\mathcal L}_{CL}\|
\|\widetilde u^{CL}_p \|_{L^\infty(\partial\Omega_{CL})} \\
&\lesssim C_p p^2 e^{-b \lambda p}.
\qedhere 
\end{align*}
\end{proof}

\begin{theorem}
\label{thm:balanced-norm}
Assume the hypotheses of Proposition~\ref{prop:thm.3.4.8} and
let $\lambda_0$, which depends solely on 
$\Omega$, $A$, $c$, and $f$, be given 
by Proposition~\ref{prop:thm.3.4.8}. Then for each $\lambda \in (0,\lambda_0]$ there exist
$C$, $b > 0$ independent of $p$ and $\varepsilon$ such that for the  solution $u$ of (\ref{eq:model-problem}) 
and its Galerkin approximation $u_N \in S^{p,1}_0({\mathcal T}(\min\{\lambda p \varepsilon,1/2\},{\mathbf L}))$ 
there holds 
$$
\|u - u_N\|_{\sqrt{\varepsilon}} \leq C \left( e^{-b p}+ \sqrt{\varepsilon} p^{6} e^{-bL}\right). 
$$
\end{theorem}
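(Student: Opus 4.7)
\emph{Plan.} Split the balanced norm as $\|u-u_N\|_{\sqrt{\varepsilon}} = |u-u_N|_{\sqrt{\varepsilon}} + \|u-u_N\|_{L^2(\Omega)}$. For the seminorm part I would apply Corollary~\ref{cor:balanced-norm-key-estimate} with the specific choice $v:= \Pi u \in V_N$ furnished by Proposition~\ref{prop:thm.3.4.8}; for the $L^2$-part I would use coercivity $c_0\|v\|_{L^2(\Omega)}^2 \le \|v\|_\varepsilon^2$ combined with Céa's lemma in the energy norm. Concretely, Céa plus $\varepsilon \le \sqrt\varepsilon$ gives
\[
\|u-u_N\|_{L^2(\Omega)} \lesssim \|u-u_N\|_\varepsilon \le \|u-\Pi u\|_\varepsilon \lesssim |u-\Pi u|_{\sqrt\varepsilon} + \|u-\Pi u\|_{L^2(\Omega)},
\]
which is controlled directly by (\ref{prop:thm.3.4.8-i}) and (\ref{prop:thm.3.4.8-ii}).

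\emph{Direct terms in the Corollary.} The term $|u-\Pi u|_{\sqrt\varepsilon}$ is exactly (\ref{prop:thm.3.4.8-ii}). For $\varepsilon^{-1/2}\|u-\Pi u\|_{L^2(\Omega\setminus\Omega_0)}$ I split $\Omega\setminus\Omega_0 = \Omega_{aniso}\cup \Omega_{CL}$ and use $\|u-\Pi u\|_{L^2(\omega)} \le |\omega|^{1/2}\|u-\Pi u\|_{L^\infty(\omega)}$ with $|\Omega_{aniso}| = {\mathcal O}(\kappa)$, $|\Omega_{CL}|={\mathcal O}(\kappa^2)$, and $\kappa = \lambda p\varepsilon$; on $\Omega_{aniso}\subset\Omega\setminus\Omega_{CL}$ the sharp bound (\ref{prop:thm.3.4.8-ia}) applies, whereas on $\Omega_{CL}$ the $\varepsilon^{-1/2}\cdot\kappa$ factor produces exactly the desired $\sqrt\varepsilon\,\lambda p$ factor multiplying (\ref{prop:thm.3.4.8-i}), giving a term of shape $C\sqrt\varepsilon\,p^6 e^{-bL}$ plus an exponentially small contribution in $p$.

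\emph{Terms involving $\Pi^{L^2}_{\Omega_0}u$.} Because $\Omega_0$ is the union of the macro ``large'' elements (shape regular of diameter ${\mathcal O}(1)$), the two standard polynomial inverse estimates
$\|w\|_{L^\infty(K)} \lesssim p \|w\|_{L^2(K)}$ and $\|\nabla w\|_{L^2(K)}\lesssim p^2 \|w\|_{L^2(K)}$ for $w\in\mathcal{Q}_p$ apply on each element of $\Omega_0$. Combined with the $L^2$-optimality of $\Pi^{L^2}_{\Omega_0}$, this yields for any $w\in V_N|_{\Omega_0}$
\[
\|\nabla(w-\Pi^{L^2}_{\Omega_0}u)\|_{L^2(\Omega_0)} \lesssim \|\nabla(u-w)\|_{L^2(\Omega_0)} + p^2\|u-w\|_{L^2(\Omega_0)},
\]
\[
\|w-\Pi^{L^2}_{\Omega_0}u\|_{L^\infty(\Omega_0)} \lesssim \|u-w\|_{L^\infty(\Omega_0)} + p^2\|u-w\|_{L^2(\Omega_0)}.
\]
Taking $w=\Pi u$, together with $\Omega_0 \subset \Omega\setminus\Omega_{CL}$ so that (\ref{prop:thm.3.4.8-ia}) is available on $\partial\Omega_0$, controls both bracketed contributions by $Cp^M \lambda^{-1/2} e^{-b\lambda p}$. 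Multiplying the $L^\infty$-bound by $\|{\mathcal L}_0\|$ from Lemma~\ref{lemma:L0} (with $\kappa=\lambda p\varepsilon$ the bound simplifies to $\|{\mathcal L}_0\|\lesssim p^{3/2}$) only enlarges the polynomial prefactor.

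\emph{Conclusion and main obstacle.} Collecting the four bounds yields $\|u-u_N\|_{\sqrt\varepsilon} \le C(p^M e^{-b\lambda p}+\sqrt\varepsilon\,p^6 e^{-bL})$ for some fixed $M$, and $p^M e^{-b\lambda p}$ is absorbed into $e^{-b'p}$ by taking $b' < b\lambda$ and $p$ large (with the constant enlarged for small $p$). The main technical obstacle is the bookkeeping of $p$-powers in Step~3 — in particular, verifying that the weighted $L^2$-projection on $\Omega_0$ enjoys the $L^\infty$- and $H^1$-quasi-stability needed, and that $\partial\Omega_0$ meets $\overline{\Omega_{CL}}$ only in isolated points, so that the finer bound (\ref{prop:thm.3.4.8-ia}) can be used everywhere on $\partial\Omega_0$. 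Once those points are addressed, everything else is a direct application of Proposition~\ref{prop:thm.3.4.8} and Lemma~\ref{lemma:L0}.
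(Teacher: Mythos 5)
Your proposal follows essentially the same route as the paper: Céa's lemma in the energy norm for the $L^2$-part, Corollary~\ref{cor:balanced-norm-key-estimate} with $v=\Pi u$ for the seminorm, measure counting on $\Omega_{aniso}$ and $\Omega_{CL}$, polynomial inverse estimates on the $\mathcal{O}(1)$ elements of $\Omega_0$ combined with the $L^2$-quasi-optimality of $\Pi^{L^2}_{\Omega_0}$, and the bound $\|{\mathcal L}_0\|\lesssim p^{3/2}$ from Lemma~\ref{lemma:L0}. The only quibble is that the two-dimensional $L^\infty$--$L^2$ inverse estimate on a unit-size element carries a factor $p^2$ rather than $p$ (as the paper uses, citing \cite[(4.6.1)]{schwab98}), but this is immaterial since every polynomial prefactor is absorbed into $e^{-b'p}$.
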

\begin{proof}
With $\Pi u$ of Proposition~\ref{prop:thm.3.4.8}, the $L^{\infty}$-estimates of Proposition~\ref{prop:thm.3.4.8}
applied on $\Omega\backslash \Omega_{CL}$ and on $\Omega_{CL}$ together with
$\operatorname*{meas}(\Omega_{CL})  = {\mathcal O}((p \varepsilon)^2)$, we have
\begin{equation}
\label{eq:robust-a-priori-in-energy-norm}
\|u - u_N\|_{L^2(\Omega)} \leq 
\|u - u_N\|_\varepsilon \leq \inf_{v \in V_N} \|u - v\|_\varepsilon \leq \|u - \Pi u\|_\varepsilon \leq 
C p^4 \left( e^{-b p}+ \sqrt{\varepsilon} p^2 e^{-bL}\right). 
\end{equation}
We are therefore left with estimating $|u - u_N|_{\sqrt{\varepsilon}}$. To that end, 
we apply Corollary~\ref{cor:balanced-norm-key-estimate} with 
$v = \Pi u$ of Proposition~\ref{prop:thm.3.4.8}, which yields, since 
$\operatorname*{meas}(\Omega\setminus(\Omega_0\cup \Omega_{CL}))  = {\mathcal O}(p \varepsilon)$ 
and 
$\operatorname*{meas}(\Omega_{CL})  = {\mathcal O}((p \varepsilon)^2)$ \begin{align*} 
|u - u_N|_{\sqrt{\varepsilon}} &\lesssim 
\|{\mathcal L}_0\| \|\Pi^{L^2}_{\Omega_0} u - \Pi u\|_{L^\infty(\partial\Omega_0)} + 
\sqrt{\varepsilon} \|\nabla ( \Pi^{L^2}_{\Omega_0} u - \Pi u) \|_{L^2(\Omega_0)} 
+ p^{4.5} e^{-b p}+ \sqrt{\varepsilon}p^6 e^{-bL}. 
\end{align*} 
We exploit that $\Omega_0$ consists of a fixed number of shape-regular elements. Hence, a polynomial inverse
estimate on the reference element (cf.~\cite[(4.6.5)]{schwab98}) gives, since $\Omega_0 \cap \Omega_{CL} =\emptyset$,
$$
\|\nabla ( \Pi^{L^2}_{\Omega_0} u - \Pi u) \|_{L^2(\Omega_0)} 
\lesssim p^2 
\|\Pi^{L^2}_{\Omega_0} u - \Pi u \|_{L^2(\Omega_0)} 
\lesssim p^2 \|u - \Pi u\|_{L^2(\Omega_0)} \lesssim p^6 e^{-b p}. 
$$
The term $\|\Pi^{L^2}_{\Omega_0} u - \Pi u\|_{L^\infty(\partial\Omega_0)}$ is estimated 
again with polynomial inverse
estimates (cf. \cite[(4.6.1)]{schwab98})
\begin{align*}
\|\Pi^{L^2}_{\Omega_0} u - \Pi u\|_{L^\infty(\partial\Omega_0)}
\leq 
\|\Pi^{L^2}_{\Omega_0} u - \Pi u\|_{L^\infty(\Omega_0)}
\lesssim p^2 
\|\Pi^{L^2}_{\Omega_0} u - \Pi u\|_{L^2(\Omega_0)}
&\lesssim p^2 \|u - \Pi u\|_{L^2(\Omega_0)} \\
&\lesssim p^6  e^{-b p}. 
\end{align*}
Finally, Lemma~\ref{lemma:L0} yields $\|{\mathcal L}_0\| \leq C p^{3/2}$ for fixed $\lambda$. 
\end{proof}
\section{$L^\infty$-estimates}
$L^\infty$-estimates for the Galerkin error $u - u_N$ are obtained in $3$ steps: using the fact that the number of 
elements in $\Omega_0$ and in $\Omega_{aniso}$ is fixed, we estimate first $\|u - u_N\|_{L^\infty(\Omega_0)}$ and 
then $\|u - u_N\|_{L^\infty(\Omega_{aniso})}$. In a final step, we estimate $\|u - u_N\|_{L^\infty(\Omega_{CL})}$. 
For this last estimate, we need to make an assumption on the vector ${\mathbf L}$, namely, that significant 
geometric refinement is only possible at the boundary $\partial\Omega$: 
\begin{assumption} 
\label{assumption:geometric-refinement}
There is $L_\infty \ge 0$ such that for each $K^\M$ with $V:= F_{K^\M}(0,0) \in {\mathcal V}_{CL}$ the following 
dichotomy holds: either ($V \in \Omega$ and $L_{K^\M} \leq L_\infty$) or $V \in \partial\Omega$.  
\end{assumption}
\begin{lemma}
\label{lemma:use-of-extension-Linfty}
Let $V_N \subset H^1_0(\Omega)$ be a closed subspace. Let $\widetilde \Omega_0 \subset \Omega$ be open. 
Let $u \in H^1_0(\Omega)$ and $u_N \in V_N$ satisfy the Galerkin orthogonality (\ref{eq:Galerkin-orthogonality}). 
Let $Iu \in V_N$ satisfy $Iu|_{\widetilde\Omega_0} = u_N|_{\widetilde\Omega_0}$. 
Then, for an implied constant depending solely on $\|A\|_{L^\infty(\Omega)}$, $\|c\|_{L^\infty(\Omega)}$, 
$\alpha_0$, $c_0$ there holds 
\begin{equation}
|u_N - Iu|_{H^1(\Omega\setminus\widetilde\Omega_0)} 
\lesssim 
|u - Iu|_{H^1(\Omega\setminus\widetilde\Omega_0)}  + \varepsilon^{-1} 
\|u - Iu\|_{L^2(\Omega\setminus\widetilde\Omega_0)}. 
\end{equation}
\end{lemma}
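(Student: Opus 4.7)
The plan is to mirror the strategy of Lemma~\ref{lemma:use-L2-projection}, exploiting that the test function $u_N - Iu$ vanishes on $\widetilde\Omega_0$ (by hypothesis) so that every integral collapses to $\Omega\setminus\widetilde\Omega_0$, and then to rescale the resulting energy estimate by a factor $\varepsilon$ to recover the unweighted $H^1$-seminorm at the cost of a $\varepsilon^{-1}$ in the $L^2$ term.

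First I would observe that $u_N - Iu \in V_N$, so Galerkin orthogonality (\ref{eq:Galerkin-orthogonality}) gives
\[
a(u_N - Iu, u_N - Iu) = a(u - Iu, u_N - Iu).
\]
Since $u_N - Iu \equiv 0$ on $\widetilde\Omega_0$, all integrands on the right-hand side are supported in $\Omega\setminus\widetilde\Omega_0$, so Cauchy--Schwarz and Young's inequality yield
\[
\|u_N - Iu\|_\varepsilon^2 \lesssim \varepsilon^2 \|\nabla(u - Iu)\|_{L^2(\Omega\setminus\widetilde\Omega_0)}^2 + \|u - Iu\|_{L^2(\Omega\setminus\widetilde\Omega_0)}^2,
\]
with implied constant depending only on $\|A\|_{L^\infty}$, $\|c\|_{L^\infty}$, $\alpha_0$, $c_0$.

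Next I would use coercivity together with the vanishing on $\widetilde\Omega_0$ to bound the left-hand side from below:
\[
\|u_N - Iu\|_\varepsilon^2 \gtrsim \varepsilon^2 \|\nabla(u_N - Iu)\|_{L^2(\Omega\setminus\widetilde\Omega_0)}^2 + \|u_N - Iu\|_{L^2(\Omega\setminus\widetilde\Omega_0)}^2.
\]
Combining the last two displays, keeping only the $H^1$-seminorm on the left, and dividing through by $\varepsilon^2$ before taking square roots gives
\[
\|\nabla(u_N - Iu)\|_{L^2(\Omega\setminus\widetilde\Omega_0)} \lesssim \|\nabla(u - Iu)\|_{L^2(\Omega\setminus\widetilde\Omega_0)} + \varepsilon^{-1} \|u - Iu\|_{L^2(\Omega\setminus\widetilde\Omega_0)},
\]
which is the claimed estimate.

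There is no real obstacle here; the entire argument is a two-line variant of Lemma~\ref{lemma:use-L2-projection}. The only point that needs care is that we discard the $L^2$-contribution of $u_N - Iu$ after applying coercivity (which is why an $\varepsilon^{-1}$, rather than $\varepsilon^{-1/2}$, appears in front of $\|u-Iu\|_{L^2(\Omega\setminus\widetilde\Omega_0)}$); this is the price of measuring the error in the pure $H^1$-seminorm rather than in the balanced norm. The assumption $Iu|_{\widetilde\Omega_0} = u_N|_{\widetilde\Omega_0}$ is used crucially both to reduce the domain of integration on the right and to localize the coercivity lower bound on the left.
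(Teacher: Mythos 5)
Your proof is correct and is precisely the argument the paper intends: the paper's proof consists of the single line ``Proceeds as in the proof of Lemma~\ref{lemma:use-L2-projection}'', and your adaptation (Galerkin orthogonality, localization of all integrals to $\Omega\setminus\widetilde\Omega_0$ via $u_N-Iu\equiv 0$ on $\widetilde\Omega_0$, Young's inequality, then dividing the coercivity bound by $\varepsilon^2$ and discarding the $L^2$ term) is exactly that adaptation. No issues.
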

\begin{proof}
Proceeds as in the proof of Lemma~\ref{lemma:use-L2-projection}.
\end{proof}
\begin{lemma}
\label{lemma:Linfty-vs-H1}
Let Assumption~\ref{assumption:geometric-refinement} be valid. 
Let $V_N:= S^{p,1}_0({\mathcal T}(\kappa,{\mathbf L}))$. 
Let $u_N$, $Iu \in V_N$ satisfy $(u_N - Iu)|_{\partial\Omega_{CL}} = 0$. Then
\begin{equation}
\label{eq:Linfty-vs-H1-10}
\|u_N - I u\|_{L^\infty(\Omega_{CL})} \leq C p 
\|\nabla (u_N-  Iu)\|_{L^2(\Omega_{CL})}.  
\end{equation}
The constant $C > 0$ depends only on the shape-regularity properties of ${\mathcal T}^\M$. 
\end{lemma}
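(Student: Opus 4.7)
The plan is to reduce to a reference square and combine polynomial inverse estimates with Friedrichs/Poincar\'e inequalities. Since $\Omega_{CL}$ is the finite union of images $F_{K^\M}((0,\kappa)^2)$ over macro-elements with $V := F_{K^\M}(0,0) \in {\mathcal V}_{CL}$, and the macro-element maps are fixed analytic diffeomorphisms of bounded distortion, it suffices to bound $\|u_N - Iu\|_{L^\infty}$ on one such image (or, where needed, on the star $\bigcup_{K^\M \ni V} F_{K^\M}((0,\kappa)^2)$ of contributing patches sharing a common vertex $V$) at a time. Pulling back by $F_{K^\M}$ and rescaling $(0,\kappa)^2 \to (0,1)^2$---in two dimensions both $\|\cdot\|_{L^\infty}$ and the $H^1$-seminorm are scale-invariant up to constants determined solely by the fixed macro-triangulation---the task reduces to proving $\|\widetilde w\|_{L^\infty((0,1)^2)} \lesssim p \|\nabla \widetilde w\|_{L^2((0,1)^2)}$, where $\widetilde w$ is the scaled pull-back of $u_N - Iu$, a piecewise polynomial of degree $p$ on the geometrically refined reference mesh with $L_{K^\M}$ layers toward the origin. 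The hypothesis $(u_N - Iu)|_{\partial\Omega_{CL}} = 0$ together with $u_N - Iu \in H^1_0(\Omega)$ forces $\widetilde w$ to vanish on the ``outer'' reference edges $\{1\} \times (0,1)$ and $(0,1) \times \{1\}$ in every case.

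Assumption~\ref{assumption:geometric-refinement} now splits the analysis. In the \emph{interior} case ($V \in \Omega$), $L_{K^\M} \le L_\infty$, so the scaled mesh consists of a uniformly bounded collection of shape-regular elements with diameters $\gtrsim \sigma^{L_\infty}$; applying a polynomial inverse estimate element-wise combined with a global Poincar\'e inequality on $(0,1)^2$ (valid thanks to the vanishing on the outer edges) yields the bound with constants depending only on $L_\infty$ and $\sigma$. In the \emph{boundary} case ($V \in \partial\Omega$), $L_{K^\M}$ may be arbitrarily large, but $V$ is a vertex of $\partial\Omega$ and the two ``inner'' reference edges $\{0\} \times (0,1)$ and $(0,1) \times \{0\}$ lie on (the pre-image of) $\partial\Omega$---either directly (case~5 of Def.~\ref{def:bdylayer-mesh}) or after passing to the star of contributing patches at $V$---so $\widetilde w$ vanishes there as well. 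For each element $\widetilde K$ at layer $l$ of the geometric refinement (diameter $\sim \sigma^l$), I combine the polynomial inverse estimate with a Friedrichs inequality: if $\widetilde K$ abuts an inner edge, the element-wise Friedrichs $\|\widetilde w\|_{L^2(\widetilde K)} \lesssim \sigma^l \|\nabla \widetilde w\|_{L^2(\widetilde K)}$ absorbs the inverse-estimate factor $\sim \sigma^{-l}$; for the interior ``corner'' square $[\sigma^{l+1}, \sigma^l]^2$, which does not touch $\partial\Omega$ directly, I enlarge to $(0, \sigma^l)^2$, whose axis edges do touch $\partial\Omega$, and obtain $\|\widetilde w\|_{L^2(\widetilde K)} \le \|\widetilde w\|_{L^2((0,\sigma^l)^2)} \lesssim \sigma^l \|\nabla \widetilde w\|_{L^2((0,\sigma^l)^2)}$, which again absorbs the $\sigma^{-l}$ blow-up.

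The main obstacle is precisely the interior corner elements in the boundary case: they do not touch $\partial\Omega$, so a direct element-wise Friedrichs is unavailable. The fix is the enlargement to $(0, \sigma^l)^2$, whose two axis-parallel edges lie on the pre-image of $\partial\Omega$; this provides a Friedrichs constant of size $\sim \sigma^l$ that exactly cancels the $\sigma^{-l}$ factor from the polynomial inverse estimate on $[\sigma^{l+1}, \sigma^l]^2$, producing a bound with a single $p$-factor independent of $l$ and $L_{K^\M}$. Taking the maximum over all elements and all macro-patches and undoing the scalings gives the claim.
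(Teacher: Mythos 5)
Your reduction to the reference square and your interior case ($V\in\Omega$, boundedly many layers, inverse estimate plus Poincar\'e) are sound, and the guiding idea for the boundary case---let the geometric decay $\sigma^l$ of the element sizes cancel the $\sigma^{-l}$ blow-up of the elementwise inverse estimate---is the right one. However, the boundary case as written has a genuine gap. A macro-element contributing to $\Omega_{CL}$ at a point $V\in\partial\Omega$ need not have any \emph{edge} on $\partial\Omega$: condition~\ref{num:requ} and the last condition of Definition~\ref{def:bdylayer-mesh} explicitly allow patches whose closure meets $\partial\Omega$ in exactly the one vertex $V$ (think of the ``middle'' patches at the reentrant corner of the L-shaped domain, where only the two extreme patches carry edges of $\partial\Omega$). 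For such a patch the reference edges $\{0\}\times(0,\kappa)$ and $(0,\kappa)\times\{0\}$ are interior mesh lines, $\widetilde w$ does not vanish there, and your enlarged square $(0,\sigma^l)^2$ has \emph{no} edge on the pre-image of $\partial\Omega$---it contains only the single point $(0,0)$, which has zero $H^1$-capacity, so no Friedrichs inequality is available on it. The same objection applies to your elementwise Friedrichs bound on the elements of such a patch that abut the ``inner'' edges. Your parenthetical ``after passing to the star of contributing patches'' points at the cure, but the element-by-element estimate is then still carried out inside a single reference square, where the required boundary condition is simply absent.

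To close the gap one has to work on the whole star $\Omega_V=\bigl(\bigcup_{K^\M:\,F_{K^\M}(0,0)=V}F_{K^\M}([0,\kappa]^2)\bigr)^\circ$ and replace the per-square Friedrichs inequality by an estimate that uses only $u_N-Iu\in H^1_0(\Omega_V)$ together with the location of $V$ on $\partial\Omega$. This is what the paper does: the inverse estimate \cite[(4.6.3)]{schwab98} gives $\|\widehat w\|_{L^\infty(K)}\lesssim\sqrt{\ln(p+1)}\,\bigl[h_K^{-1}\|\widehat w\|_{L^2(K)}+\|\nabla\widehat w\|_{L^2(K)}\bigr]$, the geometric grading gives $h_K\sim\widehat r$ on $K$, and the resulting weighted term $\|\widehat w/\widehat r\|_{L^2}$ is absorbed by a Hardy-type inequality (Lemma~\ref{lemma:weighted-poincare}) on $\Omega_V$, which holds because $\Omega$ admits an exterior cone at $V\in\partial\Omega$; no vanishing on interior mesh lines is needed. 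Your annulus-by-annulus cancellation could in principle be salvaged by running the Friedrichs argument on the geometric annuli of the \emph{full} star rather than of one patch, but that amounts to reproving this Hardy inequality. A secondary point: to arrive at the stated single factor of $p$ you must use an inverse estimate of the form $h_K^{-1}\|\cdot\|_{L^2(K)}+|\cdot|_{H^1(K)}$ with prefactor $\sqrt{\ln(p+1)}$ (or the anisotropic trace-based estimate of Lemma~\ref{lemma:inverse-estimate}); the crude two-dimensional bound $\|\pi\|_{L^\infty}\lesssim p^2h^{-1}\|\pi\|_{L^2}$ combined with Friedrichs would leave you with $p^2$.
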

\begin{proof}  
We note 
$$
\Omega_{CL} = \left(\bigcup_{V \in {\mathcal V}_{CL}} 
\bigcup \left\{F_{K^\M}([0,\kappa]^2)\,|\, \mbox{$K^\M \in {\mathcal T}^\M$ such that $F_{K^\M}(0,0) = V$}\right\}
\right)^\circ. 
$$
Consider a fixed $V \in {\mathcal V}_{CL}$. The cases $V \in \Omega$ or $V \in \partial\Omega$ may occur. 

\emph{The case $V \in \Omega$:} Assumption~\ref{assumption:geometric-refinement} implies that 
any macro-element $K^\M$ with $V = F_{K^\M}(0,0)$ is of tensor product, mixed, or geometric refinement type with 
$L_{K^\M} \leq L_\infty$. 
Denote by $\widehat u_N$ and $\widehat{Iu}$ the pull-backs of $u_N$ and $Iu$ to $\widehat S$ under
the patch map $F_{K^\M}$. 
%
We note that 
$\widehat e_1 = (0,\kappa) \times \{\kappa\}$ and 
$\widehat e_2 = \{\kappa\} \times (0,\kappa)$ form part of $F_{K^\M}^{-1} (\partial\Omega_{CL})$.
We iteratively apply the elementwise inverse estimates of Lemma~\ref{lemma:inverse-estimate} on the
pull-backs of the subelements of $\Omega_{CL}$ starting with the smallest elements. The boundary contributions on the right-hand side 
of Lemma~\ref{lemma:inverse-estimate} of the interior edges  
(in the last step these are the edges $\widehat e_1$, $\widehat e_2$) are estimated by the $L^{\infty}$-norm on the neighboring 
element, which in turn can again be estimated with Lemma~\ref{lemma:inverse-estimate}. After 
at most $L_{\infty}$ steps, we get
\begin{align*}
\|u_N - Iu\|_{L^\infty(K^{\M} \cap \Omega_{CL})}  & = 
\|\widehat u_N - \widehat{Iu}\|_{L^\infty((0,\kappa)^2)} 
\leq C p \|\nabla(\widehat u_N - \widehat{Iu})\|_{L^2((0,\kappa)^2)} + 
\|\widehat u_N - \widehat{Iu}\|_{L^\infty(\widehat e_1 \cup \widehat e_2)}  \\
&\lesssim p \|\nabla( u_N - Iu)\|_{L^2(K^\M \cap \Omega_{CL})} + 
\|u_N - Iu\|_{L^\infty(\partial\Omega_{CL})}. 
\end{align*}
This is the desired estimate since $\|u_N - Iu\|_{L^\infty(\partial\Omega_{CL})} = 0$ by assumption.

{\em The case $V \in \partial\Omega$:} 
Again, any macro-element $K^\M$ with $V = F_{K^\M}(0,0)$ is of tensor product, mixed, or geometric refinement type.  
Define the relevant neighborhood of $V$ by 
$$
\Omega_V:= \left( \bigcup_{K^\M \colon F_{K^\M}(0,0) = V} F_{K^\M}([0,\kappa]^2) \right)^\circ \subset \Omega_{CL}. 
$$
Since $V \in \partial\Omega$ and $\partial\Omega$ is a Lipschitz domain and $u_N - Iu \in H^1_0(\Omega_V)$, 
Lemma~\ref{lemma:weighted-poincare} will be applicable. 

Fix a $K^\M$ with $F_{K^\M}(0,0) = V$. 
Denote by $\widehat u_N$ and $\widehat{Iu}$ the pull-backs of $u_N$ and $Iu$ to $\widehat S$ under
the patch map $F_{K^\M}$. 
We use polynomial inverse estimates
(cf. \cite[(4.6.3)]{schwab98}) and scaling arguments for each element $K \in \widehat{\mathcal T}^{CL}_{K^\M}$ 
of that patch to estimate with $h_K$ denoting the element size of $K \in \widehat{\mathcal T}^{CL}_{K^\M}$: 
$$
\|\widehat u_N - \widehat{Iu}\|_{L^\infty(K)} \lesssim 
\sqrt{\ln (p+1)} \left[ h_K^{-1} \|\widehat u_N - \widehat{Iu}\|_{L^2(K)} 
+ \|\nabla (\widehat u_N - \widehat{Iu})\|_{L^2(K)}\right]. 
$$
Denoting by $\widehat r$ the distance from the origin and recalling that $\widehat{\mathcal T}^{CL}_{K^\M}$ is a 
geometric mesh so that for $K \in \widehat{\mathcal T}^{CL}_{K^\M}$ with $(0,0) \not\in \overline{K} $ 
we have $h_K \sim \widehat r(x)$ for all $x \in K$, we can estimate 
\begin{align*}
\|\widehat u_N - \widehat{Iu}\|_{L^\infty(K)} & 
\lesssim \sqrt{\ln (p+1)} \left[ \left\|\frac{1}{\widehat r} (\widehat u_N - \widehat{Iu})\right\|_{L^2(K)} 
+ \|\nabla (\widehat u_N - \widehat{Iu})\|_{L^2(K)}\right] \\
&\lesssim \sqrt{\ln (p+1)} \left[ \left\|\frac{1}{\widehat r} (\widehat u_N - \widehat{Iu})\right\|_{L^2((0,\kappa)^2)} \
+ \|\nabla (\widehat u_N - \widehat{Iu})\|_{L^2((0,\kappa)^2)}\right]. 
\end{align*}
Denoting by $r_V$ the distance from $V$, we conclude 
\begin{align*}
\|u_N - {Iu}\|_{L^\infty(\Omega_V)}  
&\lesssim \sqrt{\ln (p+1)} \left[ \left\|\frac{1}{r_V} (u_N - {Iu})\right\|_{L^2(\Omega_V)} 
+ \|\nabla (u_N - {Iu})\|_{L^2(\Omega_V)}\right] \\ 
&\lesssim \sqrt{\ln (p+1)} \left\|\nabla (u_N - {Iu})\right\|_{L^2(\Omega_V)},  
\end{align*}
where the second inequality follows from Lemma~\ref{lemma:weighted-poincare}, our assumption that 
$V \in \partial\Omega$, and the observation that $u_N - Iu \in H^1_0(\Omega_V)$. 
\end{proof}
\begin{lemma}
\label{lemma:Linfty-Omega0-Omegaaniso}
Assume the hypotheses of Proposition~\ref{prop:thm.3.4.8} and let $\lambda_0$ be given 
by Proposition~\ref{prop:thm.3.4.8}. Then for each $\lambda \in (0,\lambda_0]$ there exist
$C$, $b > 0$ independent of $p$ and $\varepsilon$ such that the 
Galerkin error $u - u_N$ 
satisfies 
\begin{align*}
\|u - u_N\|_{L^\infty(\Omega_0)} &\leq  C \left(  e^{-bp} +\sqrt{\varepsilon} p^8 e^{-bL}\right)
& \mbox{ and } & &
\|u - u_N\|_{L^\infty(\Omega_{aniso})} &\leq C \left(e^{-bp}+\sqrt{\varepsilon} p^{8} e^{-bL}\right).
\end{align*}
\end{lemma}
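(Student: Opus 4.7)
The strategy is to decompose the error as $u - u_N = (u - \Pi u) + (\Pi u - u_N)$ with $\Pi u$ the approximation from Proposition~\ref{prop:thm.3.4.8}. Since $\Omega_0, \Omega_{aniso} \subset \Omega\setminus\Omega_{CL}$, Proposition~\ref{prop:thm.3.4.8} gives
\[
\|u - \Pi u\|_{L^\infty(\Omega_0)} + \|u - \Pi u\|_{L^\infty(\Omega_{aniso})} \leq C p^4 \lambda^{-1/2} e^{-b\lambda p},
\]
which after absorbing $p^4\lambda^{-1/2}$ into the exponential (at the cost of a smaller $b$) contributes at most the $e^{-bp}$ term of the claim. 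It therefore suffices to estimate the discrete error $w := \Pi u - u_N \in S_0^{p,1}({\mathcal T}(\kappa,{\mathbf L}))$ in $L^\infty$ on each region.

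On $\Omega_0$ I would exploit that it consists of a fixed (macro-triangulation-determined) number of shape-regular elements of diameter $\mathcal{O}(1)$. A two-dimensional polynomial inverse estimate (cf.\ \cite[(4.6.1)]{schwab98}), already invoked in the proof of Theorem~\ref{thm:balanced-norm}, yields $\|w\|_{L^\infty(\Omega_0)} \lesssim p^2 \|w\|_{L^2(\Omega_0)}$. A triangle inequality then splits the right-hand side into $\|u - \Pi u\|_{L^2(\Omega_0)}$, bounded by $|\Omega_0|^{1/2}$ times the $L^\infty$ estimate of Proposition~\ref{prop:thm.3.4.8}, and $\|u - u_N\|_{L^2(\Omega)} \leq \|u - u_N\|_{\sqrt{\varepsilon}} \leq C(e^{-bp} + \sqrt{\varepsilon}\, p^6 e^{-bL})$ from Theorem~\ref{thm:balanced-norm}. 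Absorbing polynomial prefactors into $e^{-bp}$ and into $p^8$ in front of $\sqrt{\varepsilon} e^{-bL}$ delivers the claimed bound on $\Omega_0$.

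On $\Omega_{aniso}$ the analysis is the core difficulty. Applying the same $L^\infty$--$L^2$ inverse estimate on an anisotropic element $K$ of size $1\times\kappa$ costs a factor $\kappa^{-1/2}\sim(p\varepsilon)^{-1/2}$, which combined with the balanced-norm $L^2$ bound produces a fatal $\varepsilon^{-1/2}e^{-bp}$ term. The resolution exploits a structural feature built into Definition~\ref{def:bdylayer-mesh}: every anisotropic element has one edge on $\partial\Omega$, and since $w \in S_0^{p,1}$ it vanishes there. On the reference strip $K = (0,1)\times(0,\kappa)$ with $w(x,0)=0$, the fundamental theorem of calculus gives
\[
|w(x,y)|^2 \;\leq\; \kappa\int_0^\kappa |\partial_s w(x,s)|^2\,ds,
\]
and a one-dimensional Nikolski-type inverse estimate applied to the nonnegative polynomial $g(x):=\int_0^\kappa|\partial_s w(x,s)|^2\,ds$ of degree $2p$ yields $\|g\|_{L^\infty(0,1)}\lesssim p^2 \|g\|_{L^1(0,1)} = p^2\|\partial_y w\|_{L^2(K)}^2$. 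Combining produces the element-wise bound $\|w\|_{L^\infty(K)} \lesssim p\sqrt{\kappa}\,\|\nabla w\|_{L^2(K)} = p^{3/2}\sqrt{\lambda\varepsilon}\,\|\nabla w\|_{L^2(K)}$, and summing over the fixed number of anisotropic elements gives the same bound over $\Omega_{aniso}$.

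To finish, I would combine Proposition~\ref{prop:thm.3.4.8} with Theorem~\ref{thm:balanced-norm} to get
\[
\sqrt{\varepsilon}\,\|\nabla w\|_{L^2(\Omega)} \;\leq\; \sqrt{\varepsilon}\,\|\nabla(u-\Pi u)\|_{L^2(\Omega)} + \|u-u_N\|_{\sqrt{\varepsilon}} \;\leq\; C\bigl(e^{-bp} + \sqrt{\varepsilon}\,p^6 e^{-bL}\bigr),
\]
whence $\|w\|_{L^\infty(\Omega_{aniso})} \lesssim p^{3/2}(e^{-bp} + \sqrt{\varepsilon}\,p^6 e^{-bL})$; together with the interpolation part and a final absorption of polynomial prefactors this yields the second claim. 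The main obstacle is precisely the step of the previous paragraph: an $\varepsilon$-uniform $L^\infty$-bound on the anisotropic elements cannot be obtained from an $L^\infty$--$L^2$ inverse estimate alone, but requires using the zero Dirichlet datum on $\partial\Omega$ through a Poincar\'e inequality in the thin direction, which is available precisely because of the mesh construction in Definition~\ref{def:bdylayer-mesh}.
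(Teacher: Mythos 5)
Your treatment of $\Omega_0$ matches the paper's (inverse estimate $\|w\|_{L^\infty(\Omega_0)}\lesssim p^2\|w\|_{L^2(\Omega_0)}$ on the fixed number of shape-regular $\mathcal{O}(1)$ elements, then triangle inequality and the a priori $L^2$ bound), and your anisotropic mechanism --- fundamental theorem of calculus in the thin direction plus a 1D Nikolskii estimate in the long direction --- is exactly the paper's Lemma~\ref{lemma:inverse-estimate}. However, there is a genuine gap in the step that makes the $\Omega_{aniso}$ argument close: the claim that \emph{every} anisotropic element has a long edge on $\partial\Omega$ where $w=\Pi u-u_N$ vanishes is false for these meshes. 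By Definition~\ref{def:bdylayer-mesh}, a macro-element may meet $\partial\Omega$ in exactly one vertex (case~6), in which case a tensor product, mixed, or geometric patch is used; its two anisotropic elements then touch $\partial\Omega$ at most at a single point, and their long edges are interior mesh lines. The same happens for patches attached to interior vertices of ${\mathcal V}_{CL}$ (Assumption~\ref{assumption:geometric-refinement} explicitly allows $V\in\Omega$). For such elements the zero Dirichlet datum gives you nothing, and your Poincar\'e-in-the-thin-direction bound does not start from a vanishing trace.

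The repair is the one the paper builds in, and you already have all the ingredients for it: each anisotropic element shares a long edge with either $\partial\Omega$ \emph{or} with $\partial\Omega_0$ (the adjacent large element), so the correct elementwise estimate is
$\|w\|_{L^\infty(K)}\lesssim p\,(h_y/h_x)^{1/2}\|\partial_y w\|_{L^2(K)}+\|w\|_{L^\infty(e)}$
with $e$ the long edge, exactly as in Lemma~\ref{lemma:inverse-estimate} with its boundary term retained. When $e\subset\partial\Omega$ the trace term vanishes; when $e\subset\partial\Omega_0$ it is bounded by $\|w\|_{L^\infty(\Omega_0)}\le\|u-\Pi u\|_{L^\infty(\Omega_0)}+\|u-u_N\|_{L^\infty(\Omega_0)}$, i.e., by the bound you have already established in the first half of the proof --- this is precisely why the $\Omega_0$ estimate must come first. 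With that correction your bookkeeping ($p\sqrt{\lambda p\varepsilon}\cdot\varepsilon^{-1/2}\cdot(e^{-bp}+\sqrt{\varepsilon}p^6e^{-bL})\lesssim e^{-bp}+\sqrt{\varepsilon}p^{7.5}e^{-bL}$) goes through and yields the stated $p^8$.
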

\begin{proof}
Let $\Pi u$ be the approximation of Proposition~\ref{prop:thm.3.4.8}. Then, using that $\Omega_0$ is the union of 
large, shape-regular elements, we get  
\begin{align*}
\|u - u_N\|_{L^\infty(\Omega_0)} &\leq 
\|u - \Pi u\|_{L^\infty(\Omega_0)} + 
\|\Pi u - u_N\|_{L^\infty(\Omega_0)} 
\lesssim 
\|u - \Pi u\|_{L^\infty(\Omega_0)} + 
p^2 \|\Pi u - u_N\|_{L^2(\Omega_0)}\\
&\lesssim  
\|u - \Pi u\|_{L^\infty(\Omega_0)} + 
p^2 \|u - \Pi u\|_{L^2(\Omega_0)} + 
p^2 \|u - u_N\|_{L^2(\Omega_0)}
 \lesssim e^{-b p}+\sqrt{\varepsilon}p^8 e^{-bL}, 
\end{align*}
where the last step used Proposition~\ref{prop:thm.3.4.8} and employed (\ref{eq:robust-a-priori-in-energy-norm}). 
Next, we exploit that each element in ${\mathcal T}^{aniso}$ shares a ``long'' edge with either $\partial\Omega$
or with $\partial\Omega_0$. This implies with polynomial inverse estimates 
(cf.~Lemma~\ref{lemma:inverse-estimate} applied with $h_y = \lambda p\varepsilon$, $h_x = {\mathcal O}(1)$), 
Proposition~\ref{prop:thm.3.4.8}, and $\Omega_{aniso} \cap \Omega_{CL} = \emptyset$
\begin{align*}
\|u - u_N\|_{L^\infty(\Omega_{aniso})} &\leq 
\|u - \Pi u\|_{L^\infty(\Omega_{aniso})} + 
\|\Pi u - u_N\|_{L^\infty(\Omega_{aniso})}  \\
& \lesssim 
\|u - \Pi u\|_{L^\infty(\Omega_{aniso})} + 
(\lambda p \varepsilon)^{1/2} p \|\nabla(\Pi u - u_N)\|_{L^2(\Omega_{aniso})} + \|\Pi u - u_N\|_{L^\infty(\partial\Omega_0)} \\
&\lesssim \|u - u_N\|_{L^\infty(\Omega_0)} + p^{3/2} \|u - u_N\|_{\sqrt{\varepsilon}} 
+ e^{-bp} +\sqrt{\varepsilon}p^{7.5}e^{-bL} \\
& \stackrel{\text{Thm.~\ref{thm:balanced-norm}}}{\lesssim} e^{-bp} + \sqrt{\varepsilon}p^{8}e^{-bL}. 
\qedhere
\end{align*}
\end{proof}
\begin{theorem}
\label{thm:Linfty-estimate}
Assume the hypotheses of Proposition~\ref{prop:thm.3.4.8}
and let $\ell > 0$. 
Let Assumption~\ref{assumption:geometric-refinement} be valid. 
Assume that ${L}_{K^\M} \ge \ell p$ for those macro-elements $K^\M$ with the property 
that $F_{K^\M}(0,0)$ is a vertex of $\Omega$.
Let $\lambda_0$ be given 
by Proposition~\ref{prop:thm.3.4.8}. Let $u \in H^1_0(\Omega)$ solve (\ref{eq:model-problem}) and 
$u_N \in S^{p,1}_0({\mathcal T}(\min\{\lambda p \varepsilon,1/2\},{\mathbf L}))$ be its Galerkin approximation. 
Then for each $\lambda \in (0,\lambda_0]$ there exist
$C$, $b > 0$ independent of $p$ and $\varepsilon$ such that the finite element error $u - u_N$ satisfies 
$$
\|u - u_N\|_{L^\infty(\Omega)} \leq C e^{-bp}. 
$$
\end{theorem}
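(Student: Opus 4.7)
The plan is to combine Lemma~\ref{lemma:Linfty-Omega0-Omegaaniso} with a new estimate on $\Omega_{CL}$, so that, since $\Omega = \Omega_0 \cup \Omega_{aniso} \cup \Omega_{CL}$, we are done. Under the hypothesis $L_{K^\M} \ge \ell p$, the bounds from Lemma~\ref{lemma:Linfty-Omega0-Omegaaniso} already give $\|u-u_N\|_{L^\infty(\Omega_0 \cup \Omega_{aniso})} \lesssim e^{-bp}$ (after absorbing polynomial factors into a possibly smaller $b>0$). So the real work is to prove $\|u - u_N\|_{L^\infty(\Omega_{CL})} \lesssim e^{-bp}$.

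The key idea is to construct an $Iu \in V_N$ whose trace on $\partial\Omega_{CL}$ agrees with $u_N$, so that Lemma~\ref{lemma:Linfty-vs-H1} and Lemma~\ref{lemma:use-of-extension-Linfty} (with $\widetilde\Omega_0 = \Omega \setminus \Omega_{CL}$) become applicable. Concretely, with $\Pi u$ from Proposition~\ref{prop:thm.3.4.8} and $w := (u_N - \Pi u)|_{\partial\Omega_{CL}}$, I would set
\[
Iu := \begin{cases} u_N & \text{in } \Omega \setminus \Omega_{CL}, \\ \Pi u + {\mathcal L}_{CL} w & \text{in } \Omega_{CL}, \end{cases}
\]
where ${\mathcal L}_{CL}$ is the lifting from Lemma~\ref{lemma:widetilde_L0}. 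By construction $Iu \in V_N$ is continuous across $\partial\Omega_{CL}$ (both pieces equal $u_N$ there), and $(u_N - Iu)|_{\Omega \setminus \Omega_{CL}} \equiv 0$.

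Applying Lemma~\ref{lemma:Linfty-vs-H1} gives $\|u_N - Iu\|_{L^\infty(\Omega_{CL})} \lesssim p \|\nabla(u_N - Iu)\|_{L^2(\Omega_{CL})}$, and then Lemma~\ref{lemma:use-of-extension-Linfty} bounds the right-hand side by $|u - Iu|_{H^1(\Omega_{CL})} + \varepsilon^{-1}\|u - Iu\|_{L^2(\Omega_{CL})}$. Writing $u - Iu = (u - \Pi u) - {\mathcal L}_{CL} w$, I would estimate each piece separately: the $(u - \Pi u)$-contributions are controlled by the last two estimates of Proposition~\ref{prop:thm.3.4.8} (the ball-bounds on $\|\nabla(u-\Pi u)\|_{L^2(\cdot \cap \Omega_{CL})}$, summed over the $O(1)$ corner patches, plus $\|u - \Pi u\|_{L^2(\Omega_{CL})} \lesssim p\varepsilon\,\|u-\Pi u\|_{L^\infty(\Omega_{CL})}$ using $\mathrm{meas}(\Omega_{CL}) = O((p\varepsilon)^2)$); the ${\mathcal L}_{CL} w$-contributions are controlled by $\|{\mathcal L}_{CL}\|\,\|w\|_{L^\infty(\partial\Omega_{CL})}$, where $\|{\mathcal L}_{CL}\| \lesssim p^2 + \kappa/\varepsilon \lesssim p^2$ for fixed $\lambda$, and the boundary data $w$ is bounded by Lemma~\ref{lemma:Linfty-Omega0-Omegaaniso} together with Proposition~\ref{prop:thm.3.4.8}(\ref{prop:thm.3.4.8-ia}) since $\partial\Omega_{CL} \subset \overline{\Omega \setminus \Omega_{CL}}$.

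Each of these pieces carries either an $e^{-bp}$ factor or a $\sqrt{\varepsilon}p^N e^{-bL}$ factor; under the standing assumption $L \ge \ell p$ the latter are also $\lesssim e^{-bp}$ (with an adjusted $b$), and this absorbs the polynomial prefactors $p^N$ as well as the $\varepsilon^{-1}$ that appears in Lemma~\ref{lemma:use-of-extension-Linfty}. Finally, the triangle inequality $\|u - u_N\|_{L^\infty(\Omega_{CL})} \le \|u - Iu\|_{L^\infty(\Omega_{CL})} + \|Iu - u_N\|_{L^\infty(\Omega_{CL})}$, where $\|u - Iu\|_{L^\infty(\Omega_{CL})}$ is bounded directly via Proposition~\ref{prop:thm.3.4.8}(\ref{prop:thm.3.4.8-i}) and Lemma~\ref{lemma:widetilde_L0}, yields the claim on $\Omega_{CL}$. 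The main obstacle is the bookkeeping in the last paragraph: one must make sure that the factor $\varepsilon^{-1}$ coming from Lemma~\ref{lemma:use-of-extension-Linfty} is tamed, which relies crucially on the $\varepsilon^{-1}$-weighted $L^2$-part already built into the definition of $\|{\mathcal L}_{CL}\|$ in Lemma~\ref{lemma:widetilde_L0}, together with the fact that $\mathrm{meas}(\Omega_{CL})$ is quadratically small in $p\varepsilon$.
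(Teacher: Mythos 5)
Your proposal is correct and follows essentially the same route as the paper: the same reduction to $\Omega_{CL}$ via Lemma~\ref{lemma:Linfty-Omega0-Omegaaniso}, the identical choice of $Iu$ (your $\Pi u + {\mathcal L}_{CL}(u_N-\Pi u)$ is the paper's $\Pi u - {\mathcal L}_{CL}(\Pi u - u_N)$), and the same chain Lemma~\ref{lemma:Linfty-vs-H1} $\to$ Lemma~\ref{lemma:use-of-extension-Linfty} $\to$ Proposition~\ref{prop:thm.3.4.8} plus $\|{\mathcal L}_{CL}\|$, with the boundary data on $\partial\Omega_{CL}$ controlled through $\overline{\Omega\setminus\Omega_{CL}}=\overline{\Omega_0}\cup\overline{\Omega_{aniso}}$. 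The only cosmetic difference is that you split the error as $\|u-Iu\|+\|Iu-u_N\|$ and then expand $u-Iu$, whereas the paper writes the three-term triangle inequality directly; the bookkeeping of the $\varepsilon^{-1}$ factors and the absorption of polynomial prefactors via $L\ge \ell p$ match the paper's argument.
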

\begin{proof}
In view of Lemma~\ref{lemma:Linfty-Omega0-Omegaaniso} and $L=\mathcal{O}(p)$ it suffices to estimate $\|u - u_N\|_{L^\infty(\Omega_{CL})}$. Define, with $\Pi u$ given
by Proposition~\ref{prop:thm.3.4.8}, the function $Iu \in V_N$ by 
$$
Iu:= 
\begin{cases}
u_N & x \in \Omega \setminus \Omega_{CL} \\
\Pi u - {\mathcal L}_{CL} (\Pi u-u_N) & x \in \Omega_{CL} 
\end{cases}
$$
and estimate 
\begin{align}
\label{eq:Linftytmp}
\|u - u_N\|_{L^\infty(\Omega_{CL})} &\leq 
\|u - \Pi u\|_{L^\infty(\Omega_{CL})} + \|u_N - Iu\|_{L^\infty(\Omega_{CL})} + 
\|{\mathcal L}_{CL}(\Pi u - u_N)\|_{L^\infty(\Omega_{CL})}.
\end{align} 
The term $\|u - \Pi u\|_{L^\infty(\Omega_{CL})}$ is estimated in the desired form in Proposition~\ref{prop:thm.3.4.8}. 
For the second term in (\ref{eq:Linftytmp}) we note 
\begin{align*}
& \|u_N - Iu\|_{L^\infty(\Omega_{CL})} 
 \stackrel{\text{L.~\ref{lemma:Linfty-vs-H1}}}{\lesssim}
p \|\nabla( u_N - Iu)\|_{L^2(\Omega_{CL})} 
 \stackrel{\text{L.~\ref{lemma:use-of-extension-Linfty}}} {\lesssim}
 p \left[ \|\nabla(u - Iu)\|_{L^2(\Omega_{CL})} + \varepsilon^{-1} \|u - I u\|_{L^2(\Omega_{CL})}\right] \\
\quad & {\lesssim}
 p \left[ \|\nabla(u - \Pi u) \|_{L^2(\Omega_{CL})} + \|\nabla (\Pi u - Iu)\|_{L^2(\Omega_{CL})} + 
\varepsilon^{-1} \|u - \Pi u\|_{L^2(\Omega_{CL})} + 
\varepsilon^{-1} \|\Pi u - I u\|_{L^2(\Omega_{CL})}\right]. 
\end{align*}
Again, the terms involving $u - \Pi u$ can be estimated in the desired fashion using Proposition~\ref{prop:thm.3.4.8}. 
The remaining terms involving $\Pi u - Iu$ together with the third term of \eqref{eq:Linftytmp} 
are treated as follows: 
\begin{align*}
&\|\nabla(\Pi u - Iu)\|_{L^2(\Omega_{CL})} + \varepsilon^{-1} \|\Pi u - Iu\|_{L^2(\Omega_{CL})} +\|{\mathcal L}_{CL}(\Pi u - u_N)\|_{L^\infty(\Omega_{CL})}
\\&\qquad = 
\|\nabla {\mathcal L}_{CL} (u_N - \Pi u) \|_{L^2(\Omega_{CL})} + \varepsilon^{-1} \|{\mathcal L}_{CL} (u_N-  \Pi u)\|_{L^2(\Omega_{CL})}
+\|{\mathcal L}_{CL}(\Pi u - u_N)\|_{L^\infty(\Omega_{CL})}
\\ 
&\qquad\leq \|{\mathcal L}_{CL} \| \|u_N - \Pi u\|_{L^\infty(\partial\Omega_{CL})} \\
&\qquad\lesssim 
\|{\mathcal L}_{CL} \| \left[ \|u - u_N\|_{L^\infty(\partial\Omega_{CL})} + \|u - \Pi u\|_{L^\infty(\partial\Omega_{CL})}\right] \\
&\qquad\lesssim 
\|{\mathcal L}_{CL} \| \left[ \|u - u_N\|_{L^\infty(\Omega\setminus\Omega_{CL})} + \|u - \Pi u\|_{L^\infty(\partial\Omega_{CL})}\right] . 
\end{align*}
The second term can be controlled with the aid of Proposition~\ref{prop:thm.3.4.8}. 
Since $\overline{\Omega\setminus\Omega_{CL} } = \overline{\Omega_0} \cup \overline{\Omega_{aniso}}$, 
the first term can be controlled using Lemma~\ref{lemma:Linfty-Omega0-Omegaaniso}. 
\end{proof}
\section{Numerical example}
\label{sec:numerics}
We provide numerical examples that underline the robust exponential 
convergence of the $hp$-FEM solution in the balanced norm.
On the L-shaped domain 
$\Omega := (0,1)^2 \setminus ([1/2,1)\times [1/2,1))$  we study 
\begin{equation}
\label{eq:model-problemNumerics} 
-\varepsilon^2 \Delta u  + u = f \quad \mbox{ in $\Omega$}, 
\qquad u|_{\partial\Omega} = 0. 
\end{equation}

We use a spectral boundary layer mesh $\mathcal{T}(p\varepsilon,\mathbf{p+1})$ that is visualized 
in Fig.~\ref{fig:meshsolution} (left) and is designed in the spirit of the meshes described
in Section~\ref{sec:spectral-boundary-layer-mesh}. Although it consists of triangles only and is derived from 
two types of refinement patterns that are not covered by Definition~\ref{def:admissible-patterns}, the above analysis
could be extended to cover this type of mesh. 
The vector $\mathbf{p+1}$ stands for the constant vector with entries $p+1$ and 
reflects the fact that we employ $p+1$ steps of geometric refinement towards each of the $6$ vertices of the domain. 

\begin{figure}[ht]
\psfragscanon
\quad
\includegraphics[width=0.42\textwidth]{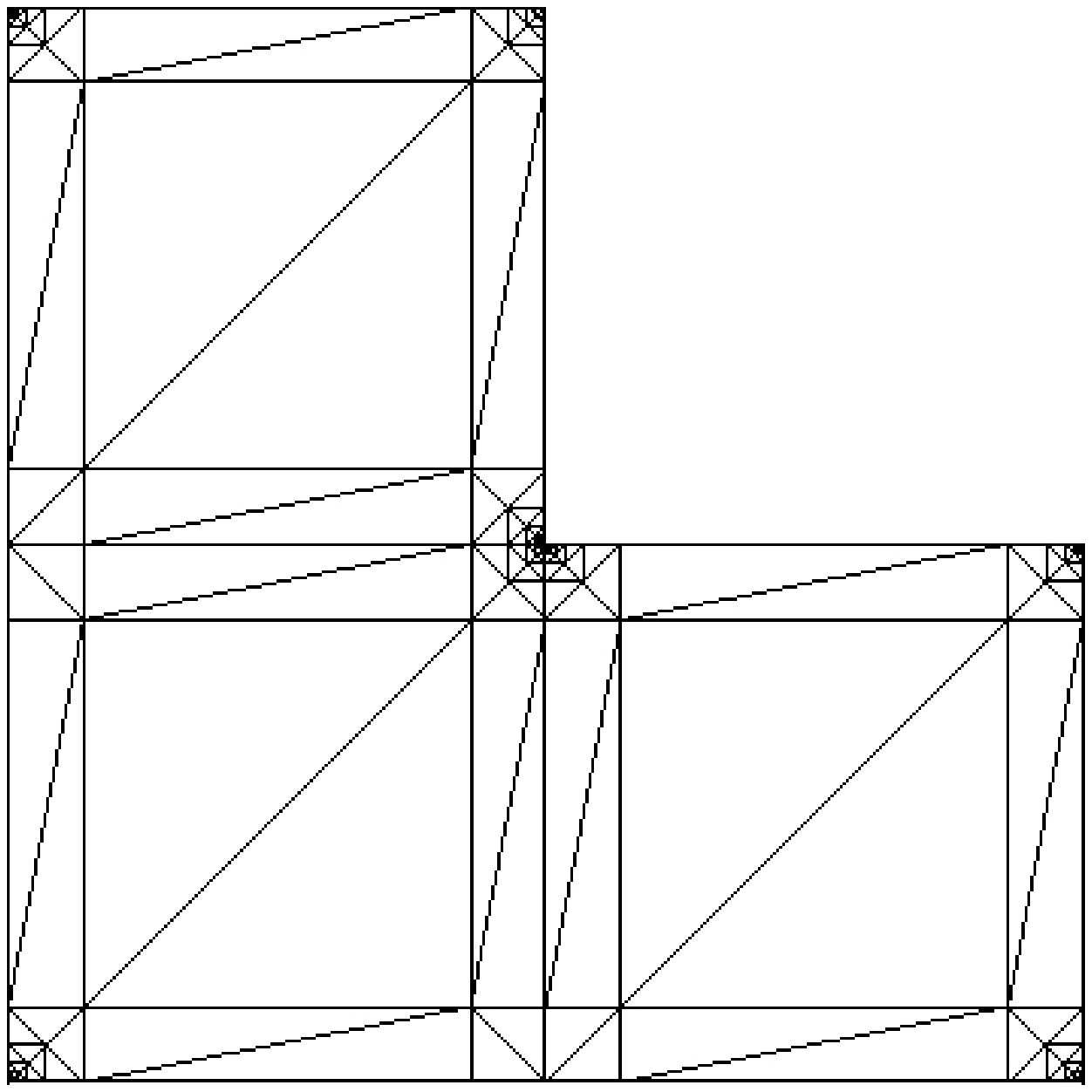}
\hfill
\includegraphics[width=0.42\textwidth]{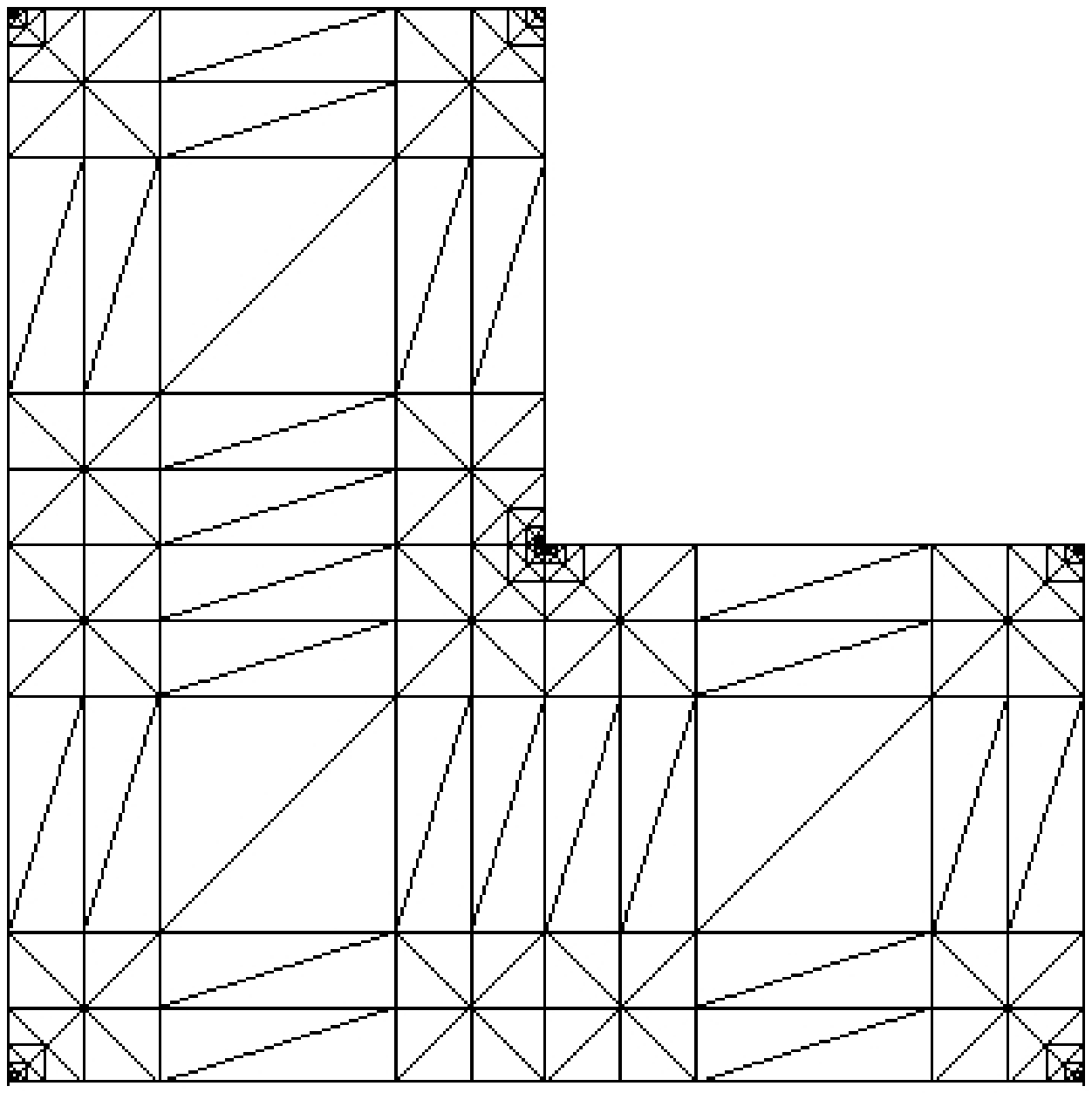}
\quad
\psfragscanoff
\caption{
\label{fig:meshsolution} Left: spectral boundary layer mesh. Right: refined mesh for computing reference solution.
}
\end{figure}

The finite element approximation $u_N \in S^{p,1}_0(\mathcal{T}(p\varepsilon,\mathbf{p+1}))$ is computed 
with the C++-software package NGSOLVE, \cite{schoeberl97,schoeberl-ngsolve,schoeberl14} 
for $p=1,\ldots,p_{max}:=7$. As the exact solution $u$ is unknown, we 
compute a reference solution $u_{ref}$ on a grid $\mathcal{T}_{fine}$, which is indicated in 
Fig.~\ref{fig:meshsolution} (right);  it is a refinement of $\mathcal{T}(p\varepsilon,\mathbf{p+1})$ obtained by adding a second layer of 
anisotropic elements around the boundary layer and doing two additional steps of geometric refinement to 
the corners. Additionally, the reference solution 
on this grid is computed with a polynomial degree of $2p_{max}$. 

\begin{example}
\label{example:polynomial-f}
\rm 
We select $f \equiv 1$ in (\ref{eq:model-problemNumerics}). 
Fig.~\ref{fig:errorbalanced} shows $\|u_{ref} - u_N\|_{L^2(\Omega)}$ and 
$|u_{ref} - u_N|_{\sqrt{\varepsilon}}$ versus the polynomial degree $p$ 
(see Tables~\ref{table:L2-polynomial}, \ref{table:balanced-polynomial} for tables with the results).
An exponential decay that is robust
in $\varepsilon$ is visible. The $L^2$-error even appears to scale with $\sqrt{\varepsilon}$. 
The balanced norm is defined in (\ref{eq:balanced-norm}) as the sum of both contributions and features therefore
also robust exponential convergence in $p$. 
\eremk
\end{example}

\begin{figure}[h]
\psfragscanon
\quad
\psfrag{L2-error}{$L^2(\Omega)$-error}
\psfrag{Polynomial degree}{\footnotesize polynomial degree $p$}
\psfrag{Error}{\footnotesize error}
\psfrag{error}{\footnotesize error}
\psfrag{e=1e-02}{\footnotesize $\varepsilon=10^{-2}$}
\psfrag{e=1e-03}{\footnotesize $\varepsilon=10^{-3}$}
\psfrag{e=1e-04}{\footnotesize $\varepsilon=10^{-4}$}
\psfrag{e=1e-05}{\footnotesize $\varepsilon=10^{-5}$}
\psfrag{e=1e-06}{\footnotesize $\varepsilon=10^{-6}$}
\psfrag{e=1e-07}{\footnotesize $\varepsilon=10^{-7}$}
\psfrag{e=1e-08}{\footnotesize $\varepsilon=10^{-8}$}
\includegraphics[width=0.49\textwidth]{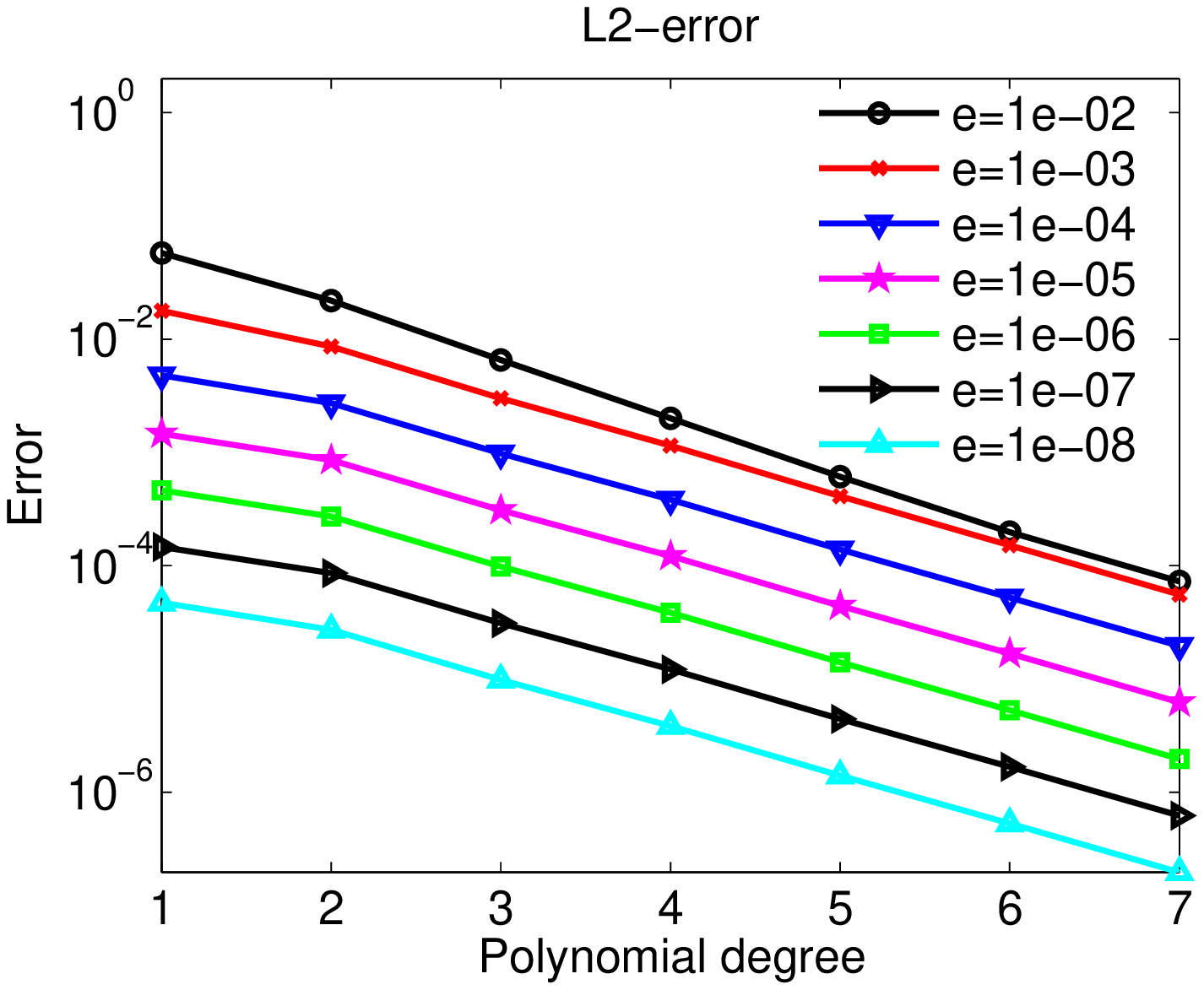}
\hfill
\psfrag{balH1Semi-Error}{$\sqrt{\varepsilon}\cdot |{\cdot}|_{H^1(\Omega)}$-error}
\includegraphics[width=0.49\textwidth]{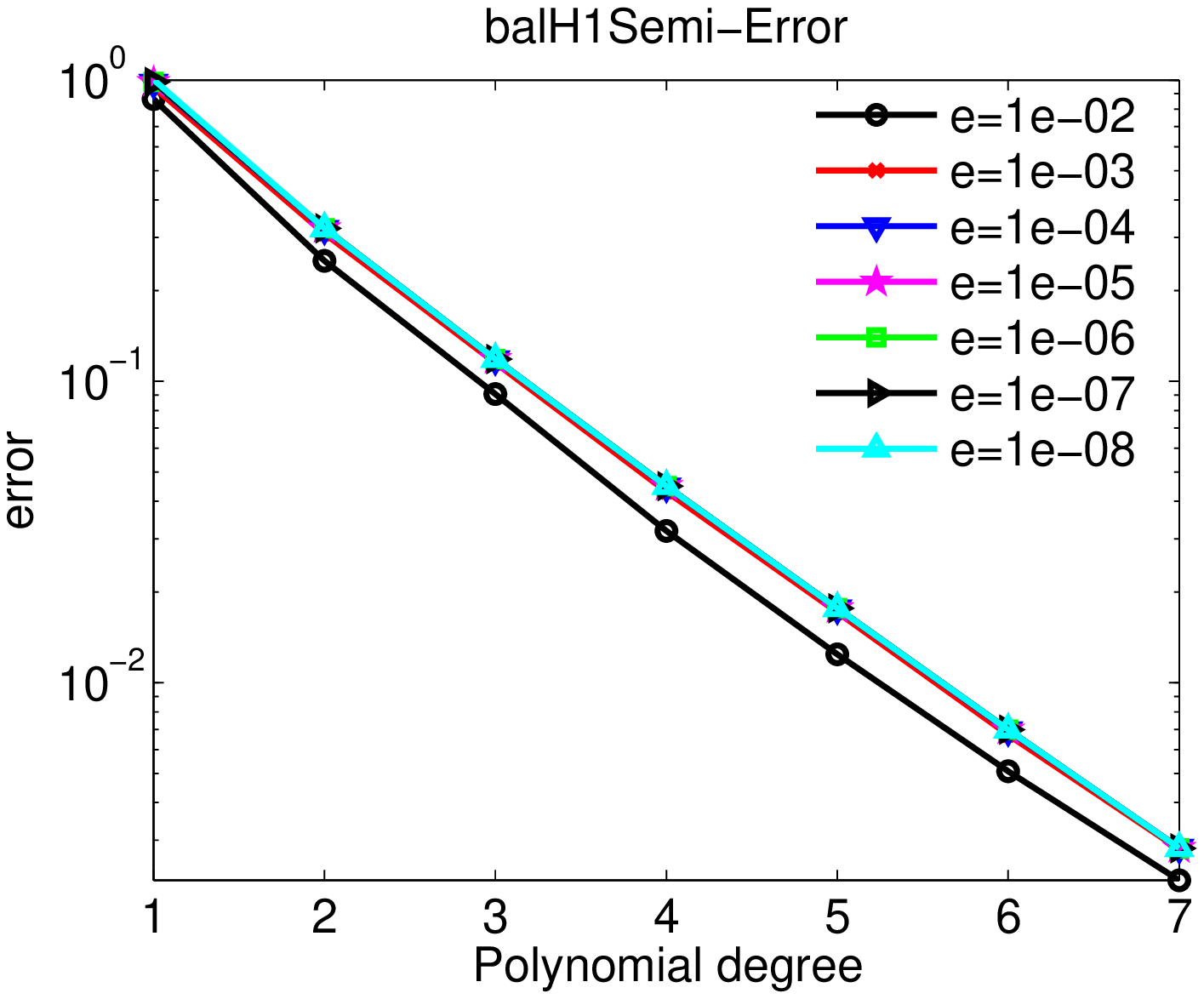}
\quad
\psfragscanoff
\caption{
\label{fig:errorbalanced} (cf.~Ex.~\ref{example:polynomial-f}) Left: $L^2$-error $\|u_N-u_{ref}\|_{L^2(\Omega)}$. Right:
balanced $H^1$-seminorm error $\sqrt{\varepsilon}\|\nabla(u_N-u_{ref})\|_{L^2(\Omega)}$.} 
\end{figure}

\begin{example}
\label{example:nearly-singular-f}
\rm 
We select $f(x,y) = \frac{1}{x^2+y^2+0.15}$  in (\ref{eq:model-problemNumerics}). 
We use the same mesh as in Example~\ref{example:polynomial-f}. 
Fig.~\ref{fig:errorbalanced1overr} shows again the errors in $L^2$ and the balanced $H^1$-seminorm
(see Tables~\ref{table:L2-nearly-singular}, \ref{table:balanced-nearly-singular} for tables with the results).
In contrast to Example~\ref{example:polynomial-f}, there is no significant dependence on $\varepsilon$ 
in the $L^2$-norm. A possible explanation is that the $L^2$-error can be bounded in the form 
$e^{-b_1 p} + \sqrt{\varepsilon} e^{-b_2 p}$, where the first term may be associated with $\Omega_0$ 
whereas the second term is linked to $\Omega\setminus\Omega_0$. The asymptotically dominant convergence 
depends on whether $b_1$ or $b_2$ is smaller. 
\eremk
\end{example}
\begin{figure}[h]
\psfragscanon
\quad
\psfrag{L2-error}{$L^2(\Omega)$-error}
\psfrag{Polynomial degree}{\footnotesize polynomial degree $p$}
\psfrag{Error}{\footnotesize error}
\psfrag{error}{\footnotesize error}
\psfrag{e=1e-02}{\footnotesize $\varepsilon=10^{-2}$}
\psfrag{e=1e-03}{\footnotesize $\varepsilon=10^{-3}$}
\psfrag{e=1e-04}{\footnotesize $\varepsilon=10^{-4}$}
\psfrag{e=1e-05}{\footnotesize $\varepsilon=10^{-5}$}
\psfrag{e=1e-06}{\footnotesize $\varepsilon=10^{-6}$}
\psfrag{e=1e-07}{\footnotesize $\varepsilon=10^{-7}$}
\psfrag{e=1e-08}{\footnotesize $\varepsilon=10^{-8}$}
\includegraphics[width=0.49\textwidth]{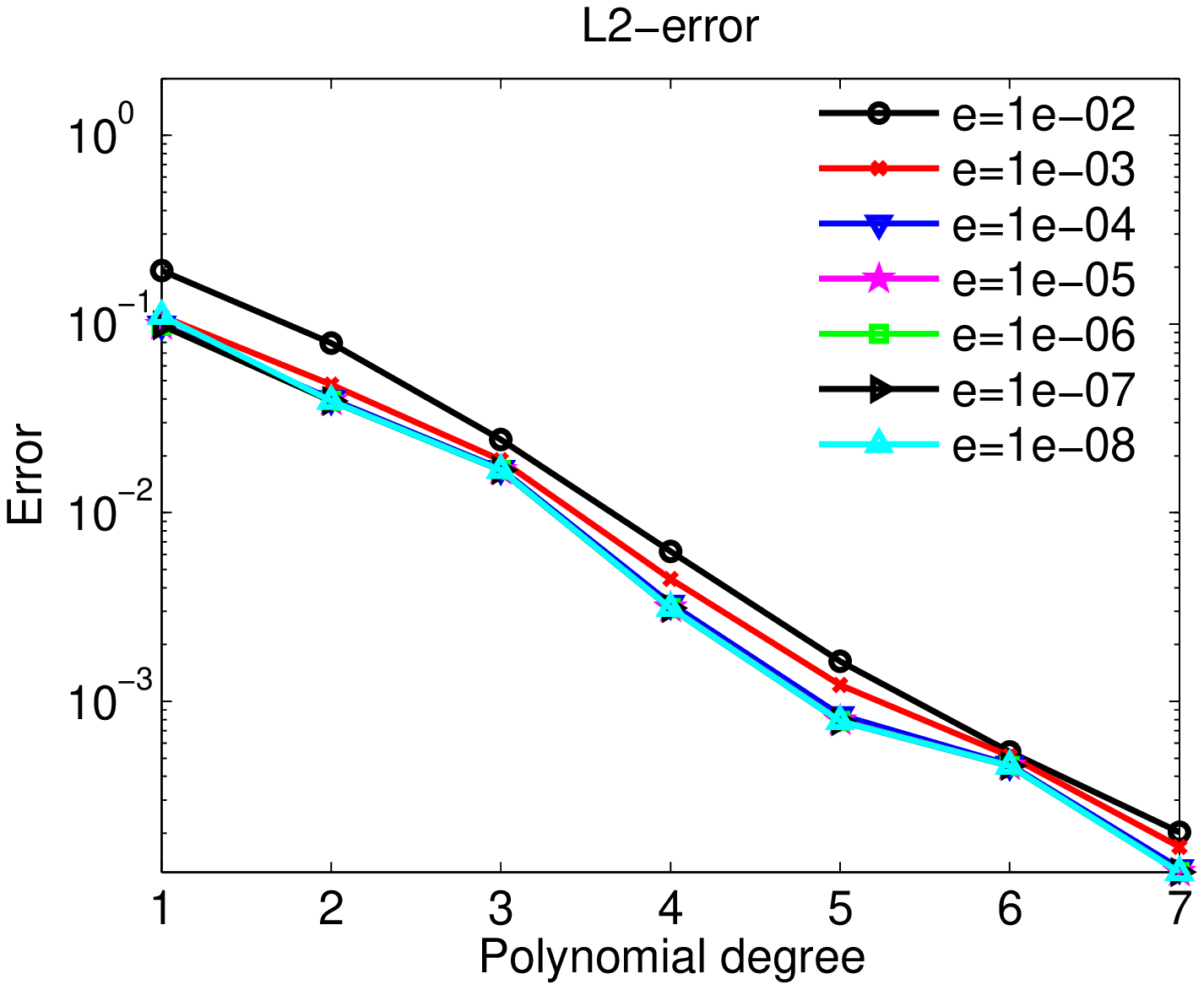}
\hfill
\psfrag{balH1Semi-Error}{$\sqrt{\varepsilon}\cdot |{\cdot}|_{H^1(\Omega)}$-error}
\includegraphics[width=0.49\textwidth]{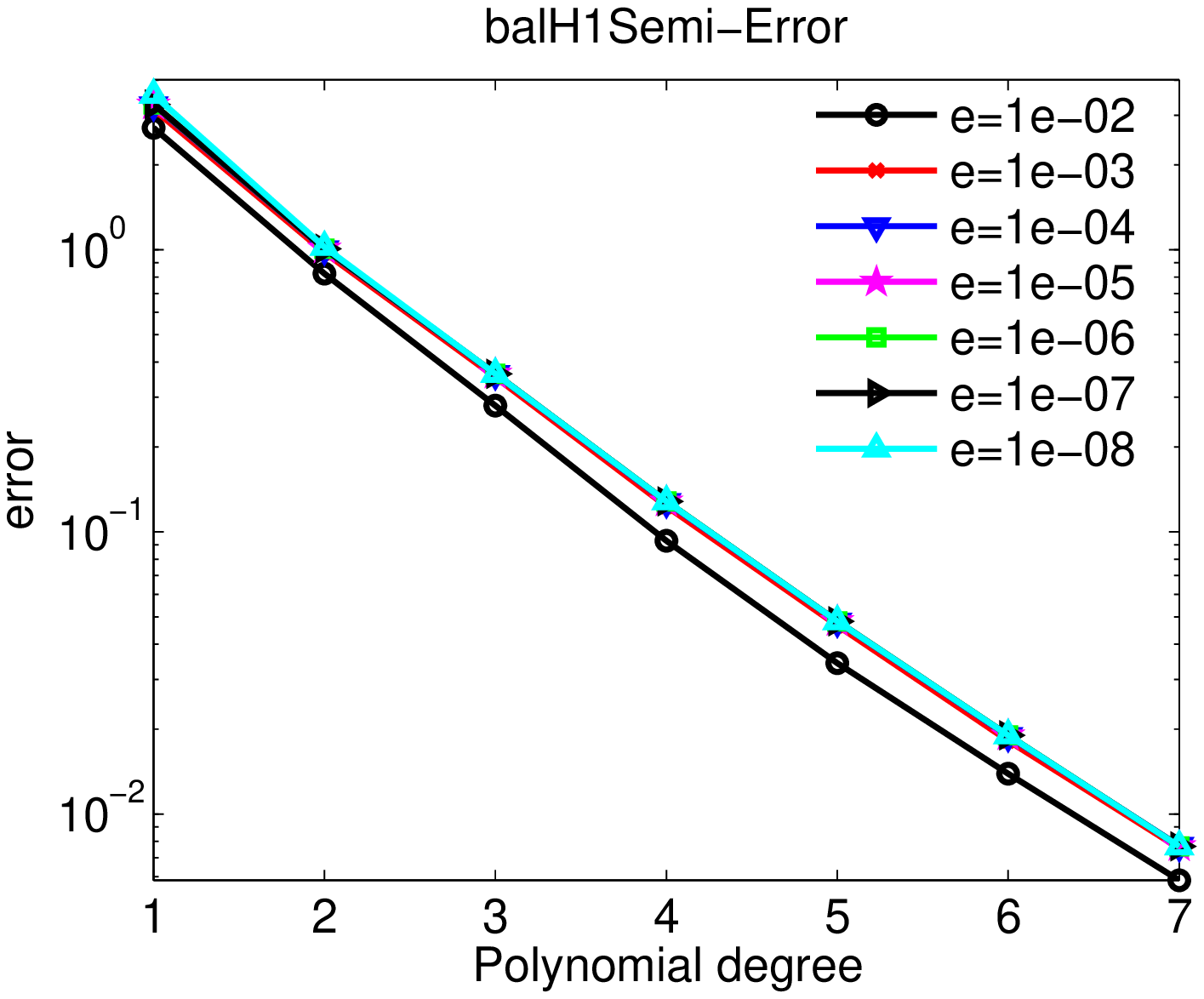}
\quad
\psfragscanoff
\caption{
\label{fig:errorbalanced1overr} (cf.~Ex.~\ref{example:nearly-singular-f}) 
Left: $L^2$-error $\|u_N-u_{ref}\|_{L^2(\Omega)}$. Right:  
balanced $H^1$-seminorm error $\sqrt{\varepsilon}\|\nabla(u_N-u_{ref})\|_{L^2(\Omega)}$.}  
\end{figure}

\section*{Appendix A}
\def\thesection{A}
\setcounter{Theorem}{0}
\begin{table}[h]
\begin{center}
{\footnotesize
\begin{tabular}{|c|c|c|c|c|c|c|c|}
\hline
 $p/\varepsilon$ & 1e-02 & 1e-03 & 1e-04 & 1e-05 & 1e-06 & 1e-07 & 1e-08\\ 
\hline 1 & 5.76e-02 & 1.78e-02 & 4.78e-03 & 1.47e-03 & 4.62e-04 & 1.46e-04 & 4.70e-05\\ 
\hline 2 & 2.19e-02 & 8.60e-03 & 2.71e-03 & 8.57e-04 & 2.71e-04 & 8.57e-05 & 2.71e-05\\ 
\hline 3 & 6.55e-03 & 3.00e-03 & 9.77e-04 & 3.10e-04 & 9.80e-05 & 3.10e-05 & 9.80e-06\\ 
\hline 4 & 2.00e-03 & 1.15e-03 & 3.82e-04 & 1.22e-04 & 3.85e-05 & 1.22e-05 & 3.85e-06\\ 
\hline 5 & 6.09e-03 & 4.11e-04 & 1.39e-04 & 4.42e-05 & 1.40e-05 & 4.43e-06 & 1.40e-06\\ 
\hline 6 & 1.97e-04 & 1.51e-04 & 5.24e-05 & 1.68e-05 & 5.31e-06 & 1.68e-06 & 5.31e-07\\ 
\hline 7 & 7.27e-05 & 5.54e-05 & 1.96e-05 & 6.22e-06 & 1.97e-06 & 6.22e-07 & 1.97e-07\\ 
\hline\end{tabular}
}
\caption{\label{table:L2-polynomial} $L^2(\Omega)$-errors for Example~\ref{example:polynomial-f}.}
\end{center}
\begin{center}
{\footnotesize
\begin{tabular}{|c|c|c|c|c|c|c|c|}
\hline
 $p/\varepsilon$ & 1e-02 & 1e-03 & 1e-04 & 1e-05 & 1e-06 & 1e-07 & 1e-08\\ 
\hline 1 & 8.64e-01 & 9.48e-01 & 9.79e-01 & 9.85e-01 & 9.85e-01 & 9.85e-01 & 1.00e+00\\ 
\hline 2 & 2.51e-01 & 3.10e-01 & 3.20e-01 & 3.22e-01 & 3.22e-01 & 3.22e-01 & 3.22e-01\\ 
\hline 3 & 9.07e-02 & 1.15e-01 & 1.18e-01 & 1.19e-01 & 1.19e-01 & 1.19e-01 & 1.19e-01\\ 
\hline 4 & 3.19e-02 & 4.31e-02 & 4.47e-02 & 4.49e-02 & 4.49e-02 & 4.49e-02 & 4.49e-02\\ 
\hline 5 & 1.24e-02 & 1.71e-02 & 1.76e-02 & 1.77e-02 & 1.77e-02 & 1.77e-02 & 1.77e-02\\ 
\hline 6 & 5.08e-03 & 6.71e-03 & 6.94e-03 & 6.98e-03 & 6.98e-03 & 6.98e-03 & 6.98e-03\\ 
\hline 7 & 2.21e-03 & 2.80e-03 & 2.83e-03 & 2.82e-03 & 2.82e-03 & 2.82e-03 & 2.82e-03\\ 
\hline\end{tabular}}
\caption{\label{table:balanced-polynomial} Balanced $H^1$-seminorm errors for Example~\ref{example:polynomial-f}.}
\end{center}
\end{table}

\begin{table}[h]
\begin{center}
{\footnotesize
\begin{tabular}{|c|c|c|c|c|c|c|c|}
\hline
 $p/\varepsilon$ & 1e-02 & 1e-03 & 1e-04 & 1e-05 & 1e-06 & 1e-07 & 1e-08\\ 
\hline 1 & 1.92e-01 & 1.10e-01 & 9.94e-02 & 9.86e-02 & 9.85e-02 & 9.85e-02 & 1.10e-01\\ 
\hline 2 & 7.92e-02 & 4.77e-02 & 4.01e-02 & 3.92e-02 & 3.91e-02 & 3.91e-02 & 3.91e-02\\ 
\hline 3 & 2.44e-02 & 1.90e-02 & 1.70e-02 & 1.68e-02 & 1.68e-02 & 1.68e-02 & 1.68e-02\\ 
\hline 4 & 6.23e-03 & 4.45e-03 & 3.29e-03 & 3.14e-03 & 3.12e-03 & 3.12e-03 & 3.12e-03\\ 
\hline 5 & 1.63e-03 & 1.22e-03 & 8.46e-04 & 7.93e-04 & 7.88e-04 & 7.87e-04 & 7.87e-04\\ 
\hline 6 & 5.39e-04 & 5.13e-04 & 4.63e-04 & 4.56e-04 & 4.56e-04 & 4.56e-04 & 4.56e-04\\ 
\hline 7 & 2.03e-04 & 1.69e-04 & 1.31e-04 & 1.25e-04 & 1.24e-04 & 1.24e-04 & 1.24e-04\\ 
\hline\end{tabular}}
\caption{\label{table:L2-nearly-singular} $L^2(\Omega)$-errors for Example~\ref{example:nearly-singular-f}.}
\end{center}
\begin{center}
{\footnotesize
\begin{tabular}{|c|c|c|c|c|c|c|c|}
\hline
 $p/\varepsilon$ & 1e-02 & 1e-03 & 1e-04 & 1e-05 & 1e-06 & 1e-07 & 1e-08\\ 
\hline 1 & 2.71e+00 & 3.15e+00 & 3.26e+00 & 3.27e+00 & 3.27e+00 & 3.27e+00 & 3.54e+00\\ 
\hline 2 & 8.23e-01 & 9.78e-01 & 1.00e+00 & 1.01e+00 & 1.01e+00 & 1.01e+00 & 1.02e+00\\ 
\hline 3 & 2.80e-01 & 3.54e-01 & 3.62e-01 & 3.64e-01 & 3.64e-01 & 3.64e-01 & 3.64e-01\\ 
\hline 4 & 9.90e-02 & 1.23e-01 & 1.28e-01 & 1.28e-01 & 1.28e-01 & 1.28e-01 & 1.28e-01\\ 
\hline 5 & 3.43e-02 & 4.66e-02 & 4.81e-02 & 4.82e-02 & 4.82e-02 & 4.82e-02 & 4.82e-02\\ 
\hline 6 & 1.39e-02 & 1.82e-02 & 1.89e-02 & 1.90e-02 & 1.90e-02 & 1.90e-02 & 1.90e-02\\ 
\hline 7 & 5.83e-03 & 7.56e-03 & 7.71e-03 & 7.69e-03 & 7.68e-03 & 7.68e-03 & 7.68e-03\\ 
\hline\end{tabular}}
\caption{\label{table:balanced-nearly-singular} 
Balanced $H^1$-seminorm errors for Example~\ref{example:nearly-singular-f}.}
\end{center}
\end{table}

\begin{lemma}
\label{lemma:lifting} 
Let $\widehat K$ be the reference triangle or the reference square. Then there exists $C > 0$ 
such that the following holds: 
For any $f \in C(\partial\widehat K)$ that is edgewise a polynomial of degree $p$ there
is a lifting ${\mathcal L} f \in \Pi_p(\widehat K)$ with the additional property 
$$
\|{\mathcal L} f\|_{L^\infty(\widehat K)} + p^{-2} \|\nabla {\mathcal L}f\|_{L^\infty(\widehat K)} 
\leq C \|f\|_{L^{\infty}(\partial\widehat K)}. 
$$ 
\end{lemma}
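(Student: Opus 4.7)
My plan is the classical vertex-plus-edge decomposition of the boundary datum. I would first construct a low-order vertex interpolant $L_V \in \Pi_p(\widehat K)$: on the square this is the bilinear nodal interpolant at the four corners, on the triangle the linear interpolant $L_V = \sum_V f(V)\lambda_V$ in barycentric coordinates. Both shape-function families are non-negative and form a partition of unity, so $\|L_V\|_{L^\infty(\widehat K)} \le \|f\|_{L^\infty(\partial\widehat K)}$ and $\|\nabla L_V\|_{L^\infty(\widehat K)}$ is bounded by an absolute constant times $\|f\|_{L^\infty(\partial\widehat K)}$. Replacing $f$ by $f - L_V|_{\partial\widehat K}$ (which at most doubles the boundary $L^\infty$ norm), I may assume $f$ vanishes at every vertex of $\widehat K$.

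By linearity it then suffices to construct, for each edge $e$, a polynomial $L_e \in \Pi_p(\widehat K)$ that coincides with $f|_e$ on $e$, vanishes identically on the other edges, and satisfies $\|L_e\|_{L^\infty(\widehat K)} + p^{-2}\|\nabla L_e\|_{L^\infty(\widehat K)} \lesssim \|f\|_{L^\infty(e)}$. On the reference square, taking $e = \{y=0\}$ and $g(x) := f(x,0)$ (a polynomial of degree $\le p$ with $g(0)=g(1)=0$), I would set $L_e(x,y) := g(x)(1-y) \in \mathcal Q_p$. It manifestly vanishes on the other three edges, has $\|L_e\|_{L^\infty}\le\|g\|_{L^\infty}$, and the gradient bound follows from the Markov inequality $\|g'\|_{L^\infty(0,1)} \le C p^2 \|g\|_{L^\infty(0,1)}$.

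On the reference triangle $\widehat K = \{x, y \ge 0,\, x+y \le 1\}$, the analogue $g(x)(1-y)$ fails to vanish on the hypotenuse, and no product ansatz of total degree $\le p$ can satisfy the three edge conditions at once. The key trick is a homogenization: writing $g(x) = \sum_{i=0}^p c_i x^i$, I define
$$
L_e(x,y) := \sum_{i=0}^p c_i\,x^i (1-y)^{p-i} = g\!\left(\frac{x}{1-y}\right)(1-y)^p.
$$
Every monomial $x^i(1-y)^{p-i}$ has total degree exactly $p$, so $L_e \in \mathcal P_p$. The boundary conditions are verified directly: on $\{y=0\}$ the sum collapses to $g(x)$; on $\{x=0\}$ only the $i=0$ term survives and vanishes because $c_0 = g(0) = 0$; on $\{x+y=1\}$, substituting $1-y = x$ yields $x^p \sum_i c_i = g(1)\,x^p = 0$. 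Since $u := x/(1-y) \in [0,1]$ on $\widehat K$ and $(1-y)^p \le 1$, the bound $\|L_e\|_{L^\infty(\widehat K)} \le \|g\|_{L^\infty(0,1)}$ is immediate. For the gradient, a short calculation gives the identities $\partial_x L_e = (1-y)^{p-1} g'(u)$ and $(1-y)\partial_y L_e = -(1-y)^{p-1}(p\,g(u) - u\,g'(u))$, so using Markov once more $\|\nabla L_e\|_{L^\infty(\widehat K)} \le C(p\|g\|_{\infty} + \|g'\|_\infty) \le C p^2 \|g\|_{L^\infty(0,1)}$. The lifts for the remaining two edges of the triangle are obtained by composing with an affine self-map of $\widehat K$ that permutes the edges (for instance $(x,y) \mapsto (x, 1-x-y)$ interchanges $\{y=0\}$ with $\{x+y=1\}$), which preserves both $\Pi_p$ and all the required estimates.

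The only substantive point is spotting the homogenization formula on the triangle, which simultaneously enforces total degree $\le p$, polynomiality, and vanishing on two non-parallel edges; the naive factorization $L_e = x(1-x-y)\,h(x)$ would cost a gratuitous $p^2$ in $L^\infty$ via a weighted Markov inequality and is therefore ruled out. Once the homogenization is in hand, both estimates are forced by $u \in [0,1]$ and the standard Markov inequality. Summing $L_V$ and the edge contributions $L_e$ yields the desired $\mathcal L f$.
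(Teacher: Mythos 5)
Your proof is correct and follows essentially the same strategy as the paper's: subtract the vertex interpolant, lift edge by edge, and control the gradient via Markov's inequality, the only difference being the explicit edge extension on the triangle — the paper takes ${\mathcal L}_e f(x,y)=f(x)\frac{1-x-y}{1-x}$ (a polynomial since $f(1)=0$ after the vertex correction), whereas you use the equivalent homogenization $g\bigl(\tfrac{x}{1-y}\bigr)(1-y)^p$. (Minor typo only: your identity for $\partial_y L_e$ should read $\partial_y L_e=-(1-y)^{p-1}\bigl(p\,g(u)-u\,g'(u)\bigr)$ without the extra factor $(1-y)$ on the left; the stated bound is unaffected.)
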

\begin{proof}
We only illustrate the case $\widehat K = \{(x,y)\,|\, 0 < x < 1, 0 < y < 1-x\}$ of a triangle, 
following \cite[Lemma~{3.1}]{babuska-szabo-katz81}. 
After subtracting the polynomial of degree $1$ that interpolates in the three vertices, we may
assume that $f$ vanishes in the three vertices. The lifting is constructed for each edge
separately. We consider the edge $e = (0,1) \times \{0\}$ and define the lifting by 
${\mathcal L}_e f(x,y):= f(x) \frac{1-x-y}{1-x}$. Then 
$\|{\mathcal L}_e f\|_{L^\infty(\widehat K)} \leq \|f\|_{L^\infty(e)}$. For the gradient estimate,
we consider only $\partial_y {\mathcal L}_e f = - f(x)/(1-x)$ and note 
$\sup_{x \in (0,1)} |f(x)/(1-x)| \leq \|f^\prime\|_{L^\infty(0,1)} \leq 2 p^2 \|f\|_{L^\infty(0,1)}$, 
where the last estimate expresses Markov's inequality for polynomials of degree $p$ 
(see, e.g., \cite[Chap.~4, Thm.~{1.4}]{devore93} for a proof). 
\end{proof}
\begin{lemma}
\label{lemma:weighted-poincare}
Let $\omega \subset \BbbR^2$ be a bounded, open set. Let $\overline{x} \in \partial\omega$ and assume
that $\omega$ satisfies an exterior cone condition at $\overline{x}$:  There is a rotation $Q \in \BbbR^{2 \times 2}$
and a constant $c > 0$ such that the set $\overline{x} + Q K \subset \BbbR^2 \setminus \overline{\omega}$, where 
$K := \{x = (x_1,x_2) \in \BbbR^2\,|\, 0 < x_2 < c |x_1|\}$. Then there exists $C > 0$ depending solely on $c$ such that 
$$
\left\|\frac{1}{\operatorname*{dist}(\cdot,\overline{x})} u\right\|_{L^2(\omega)} \leq C \|\nabla u\|_{L^2(\omega)} 
\qquad \forall u \in H^1_0(\omega). 
$$
\end{lemma}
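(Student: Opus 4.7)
The plan is to reduce the inequality to a Poincaré estimate on the unit circle by using polar coordinates around $\overline{x}$. The key observation is that the cone $K$ is scale-invariant: for every $r>0$, the circle of radius $r$ centered at $\overline{x}$ intersects the exterior cone $\overline{x}+QK$ in an arc whose angular measure is a positive constant depending only on $c$. Hence, for $u \in H^1_0(\omega)$ extended by zero to $\mathbb{R}^2$, the slice $u(r,\cdot)$ vanishes on a fixed set of angles of positive measure, which gives a uniform Poincaré constant on each circle.

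Concretely, I first translate so that $\overline{x}=0$ and rotate so that $Q=I$; these operations preserve both sides of the inequality. By density it suffices to treat $u \in C^\infty_c(\omega)$, extended by zero to $\mathbb{R}^2$, so that the extension vanishes identically on the cone $K = \{(x_1,x_2): 0 < x_2 < c|x_1|\}$. Introducing polar coordinates $(r,\theta)$ and the angular set $\Theta := \{\theta \in (-\pi,\pi] : (\cos\theta,\sin\theta)\in K\}$, one has $|\Theta|=\alpha > 0$ with $\alpha$ depending only on $c$, and $u(r,\theta)=0$ for every $\theta \in \Theta$ and every $r>0$.

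Next, I invoke the standard Poincaré inequality on $S^1$: for any $v \in H^1(S^1)$ that vanishes at some point $\theta_0$, one has
\begin{equation*}
\int_0^{2\pi} |v(\theta)|^2\,d\theta \leq 4\pi^2 \int_0^{2\pi} |v'(\theta)|^2\,d\theta,
\end{equation*}
which follows by writing $v(\theta)=\int_{\theta_0}^\theta v'(s)\,ds$ and applying Cauchy--Schwarz. Applying this to $v(\theta) = u(r,\theta)$ for each fixed $r>0$ and multiplying by $1/r$, I obtain
\begin{equation*}
\int_0^\infty \int_0^{2\pi} \frac{|u(r,\theta)|^2}{r^2}\,r\,d\theta\,dr \leq C \int_0^\infty \int_0^{2\pi} \frac{|\partial_\theta u(r,\theta)|^2}{r^2}\,r\,d\theta\,dr.
\end{equation*}
The left-hand side equals $\|u/\operatorname{dist}(\cdot,\overline{x})\|_{L^2(\omega)}^2$, while the right-hand side is bounded by $C\|\nabla u\|_{L^2(\omega)}^2$ because $|\nabla u|^2 \geq |\partial_r u|^2 + r^{-2}|\partial_\theta u|^2$ in polar coordinates.

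The only non-routine step is verifying that the Poincaré constant on each circle is independent of $r$. This follows from the scale invariance of the cone $K$, which ensures that $\Theta$ (and hence the point $\theta_0$ where $u(r,\cdot)$ vanishes) can be chosen uniformly, so the resulting constant depends on $c$ but not on $r$. No further obstacles arise, since the bounded-domain assumption on $\omega$ makes all integrals automatically finite and the density step is standard for $H^1_0$.
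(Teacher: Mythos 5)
Your proof is correct, but it takes a genuinely different route from the paper's. The paper normalizes $\operatorname{diam}\omega \leq 1$, $\overline{x}=0$, $Q=I$, observes that $\widetilde\omega := B_1(0)\setminus K$ is a Lipschitz domain containing $\omega$ with $0 \in \partial\widetilde\omega$, extends $u$ by zero, and then invokes the standard Hardy inequality $\|u/\operatorname{dist}(\cdot,\partial\widetilde\omega)\|_{L^2} \leq C\|\nabla u\|_{L^2}$ for $H^1_0$-functions on Lipschitz domains; since $\operatorname{dist}(x,\partial\widetilde\omega) \leq |x|$, the claim follows. You instead give a direct, self-contained argument in polar coordinates: the scale invariance of the cone means $u(r,\cdot)$ vanishes on a fixed angular set $\Theta$ for every $r$, the one-dimensional Poincar\'e inequality on each circle then controls $\int |u(r,\theta)|^2\,d\theta$ by $\int|\partial_\theta u(r,\theta)|^2\,d\theta$ with an $r$-independent constant, and integration against $r^{-1}\,dr$ together with $|\nabla u|^2 = |\partial_r u|^2 + r^{-2}|\partial_\theta u|^2$ yields the weighted estimate. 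Your approach avoids citing the Hardy inequality as a black box and produces an explicit constant; in fact it shows slightly more than you claim, since vanishing at a single angle $\theta_0 \in \Theta$ already gives the circle Poincar\'e constant $4\pi^2$ independently of $c$, so the final constant is absolute. The paper's reduction is shorter but leans on an external ``standard estimate.'' All the technical points in your write-up (density of $C_c^\infty(\omega)$, smoothness of the zero extension, boundedness of the $r$-range) are handled correctly.
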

\begin{proof}
The desired estimate is scale invariant. We therefore assume $\operatorname*{diam} \omega \leq 1$. 
Furthermore, we assume $\overline{x} = 0$ and $Q = \operatorname*{I}$. We note that 
$\widetilde \omega:= B_1(0) \setminus K $ is a Lipschitz domain. Let $\widetilde u$ be the zero extension of $u$ to $\BbbR^2$. 
Standard estimates then provide 
$$
\left\|\frac{1}{\operatorname*{dist}(\cdot,\partial\widetilde \omega)} \widetilde u\right\|_{L^2(\widetilde\omega)} 
\leq C \|\nabla u\|_{L^2(\widetilde\omega)}. 
$$
Since $0 \in \partial\widetilde\omega$, the result follows. 
\end{proof}
\begin{lemma}
\label{lemma:inverse-estimate}
Let $h_x$, $h_y \in (0,1]$. Let $S_h:= (0,h_x) \times (0,h_y)$ and 
$T_h:= \{(x,y)\,|\, 0 < x < h_x, 0 < y < h_y (1-x/h_x)\}$. 
Then there exists $C > 0$ such that for all $p \in \BbbN$ and all $\pi \in {\Pi}_p$ 
\begin{align*}
\|\pi \|_{L^\infty(S_h)} &\leq 
C p (h_y/h_x)^{1/2} \|\partial_y \pi \|_{L^2(S_h)} + \|\pi(\cdot,0)\|_{L^\infty(0,h_x)}, \\
\|\pi \|_{L^\infty(T_h)} &\leq 
C p (h_y/h_x)^{1/2} \|\partial_y \pi \|_{L^2(T_h)} + \|\pi(\cdot,0)\|_{L^\infty(0,h_x)}. 
\end{align*}
\end{lemma}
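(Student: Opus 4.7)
The plan is to reduce to the reference configuration by affine scaling and then use the fundamental theorem of calculus in $\eta$ combined with a univariate polynomial inverse estimate of Nikolskii type.

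First, I would introduce the affine change of variables $(\xi,\eta) = (x/h_x, y/h_y)$ and set $\widehat\pi(\xi,\eta):=\pi(h_x \xi, h_y \eta)$, so that $\widehat\pi$ is again a polynomial of the same degree class on $\widehat S = (0,1)^2$ or on $\widehat T = \{0<\xi<1,\ 0<\eta<1-\xi\}$. A direct computation gives $\|\pi\|_{L^\infty} = \|\widehat\pi\|_{L^\infty}$, $\|\pi(\cdot,0)\|_{L^\infty(0,h_x)} = \|\widehat\pi(\cdot,0)\|_{L^\infty(0,1)}$, and $\|\partial_y \pi\|_{L^2} = (h_x/h_y)^{1/2}\|\partial_\eta\widehat\pi\|_{L^2}$. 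Hence both claimed inequalities reduce to showing, on the reference element $\widehat K \in \{\widehat S, \widehat T\}$,
\begin{equation*}
\|\widehat\pi\|_{L^\infty(\widehat K)} \leq Cp\,\|\partial_\eta \widehat\pi\|_{L^2(\widehat K)} + \|\widehat\pi(\cdot,0)\|_{L^\infty(0,1)}.
\end{equation*}

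For the reference estimate, I would write $\widehat\pi(\xi,\eta) = \widehat\pi(\xi,0) + \int_0^\eta \partial_\eta \widehat\pi(\xi,s)\,ds$ and apply Cauchy--Schwarz in $s$ to obtain, for any $(\xi,\eta)\in\widehat K$ (with $\eta\in(0,\eta_{\max}(\xi))$, where $\eta_{\max}\equiv 1$ on $\widehat S$ and $\eta_{\max}(\xi)=1-\xi$ on $\widehat T$),
\begin{equation*}
|\widehat\pi(\xi,\eta)| \leq |\widehat\pi(\xi,0)| + \eta_{\max}(\xi)^{1/2}\,g(\xi)^{1/2}, \qquad g(\xi):=\int_0^{\eta_{\max}(\xi)}|\partial_\eta\widehat\pi(\xi,s)|^2\,ds.
\end{equation*}
Since $\eta_{\max}\le 1$ it suffices to control $\sup_{\xi\in(0,1)} g(\xi)$ by $\|\partial_\eta\widehat\pi\|_{L^2(\widehat K)}^2$. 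A routine check shows $g$ is a polynomial in $\xi$ of degree at most $2p$ (on $\widehat S$ it is simply the integral of $|\partial_\eta\widehat\pi|^2$ over the fixed interval $(0,1)$; on $\widehat T$ the $\xi$-dependent upper limit $1-\xi$ still produces a polynomial after evaluating an antiderivative that is polynomial in both variables). I would then invoke the univariate polynomial Nikolskii inequality $\|g\|_{L^\infty(0,1)}\leq Cp^2\|g\|_{L^1(0,1)}$ (see, e.g., \cite[Chap.~4]{devore93}), yielding
\begin{equation*}
\sup_\xi g(\xi) \leq Cp^2 \int_0^1 g(\xi)\,d\xi = Cp^2 \|\partial_\eta\widehat\pi\|_{L^2(\widehat K)}^2.
\end{equation*}
Taking square roots and combining with the pointwise bound above gives the reference estimate.

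The only subtlety, and the place where some care is needed, is the degree bookkeeping for $g$ in the triangle case, where both the integrand and the upper limit contribute to the polynomial degree in $\xi$; but since both pieces are $O(p)$, the resulting degree is still $O(p)$ and the Nikolskii factor remains $Cp^2$. Transporting the reference estimate back to $S_h$ and $T_h$ via the scaling identities above produces the stated factor $Cp(h_y/h_x)^{1/2}$, completing the proof.
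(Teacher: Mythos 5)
Your proof is correct and follows essentially the same route as the paper's: the fundamental theorem of calculus in the $y$-direction, Cauchy--Schwarz, the observation that the resulting function of $x$ is a univariate polynomial of degree $O(p)$ (the paper says $2p+1$, you say $2p$; either way the Nikolskii factor is $Cp^2$), and the univariate $L^\infty$--$L^1$ inverse estimate. The only difference is that you scale to the reference element first, whereas the paper applies the inverse estimate directly on $(0,h_x)$ with the factor $h_x^{-1}$; this is purely cosmetic.
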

\begin{proof}
We only prove the second estimate for the triangle $T_h$. 
{}From the representation $\pi(x,y) = \pi(x,0) + \int_{0}^y \partial_y \pi(x,t)\,dt$, we get 
for $(x,y) \in T_h$ 
$$
|\pi(x,y)|^2 \lesssim |\pi(x,0)|^2 + \left|\int_0^y \partial_y \pi(x,t)\,dt\right|^2 
\lesssim |\pi(x,0)|^2 + h_y \int_{0}^{h_y(1-x/h_x)} |\partial_y \pi(x,t)|^2\,dt. 
$$
Since $x \mapsto \int_0^{h_y(1-x/h_x)} |\partial_y \pi(x,t)|^2\,dt$ is a polynomial of degree $2p+1$, the 
polynomial inverse estimate $\|z\|_{L^\infty(0,h_x)} \leq C p^{2} h_x^{-1}\|z\|_{L^1(0,h_x)}$ for polynomials 
$z $ of degree $p$ (see, e.g.,  \cite[Chap.~4, Thm.~{2.6}]{devore93} for a proof) yields the desired bound. 
\end{proof}
\bibliographystyle{elsart-num-sort}
\bibliography{nummech}
\end{document}